\newtheorem{thm}{Theorem}[section]
\newtheorem*{thm*}{Theorem}
\newtheorem{lem}[thm]{Lemma}
\newtheorem{prop}[thm]{Proposition}
\newtheorem{cor}[thm]{Corollary}
\newtheorem{dfn}[thm]{Definition}
\newtheorem{ques}[thm]{Question}
\theoremstyle{remark}
\newtheorem*{rmk}{Remark}
\newcommand{\bs}[1]{\boldsymbol{#1}}
\renewcommand{\bf}[1]{\mathbf{#1}}
\renewcommand{\rm}[1]{\mathrm{#1}}
\renewcommand{\cal}[1]{\mathcal{#1}}
\newcommand{\bbN}{\mathbb{N}}
\newcommand{\bbQ}{\mathbb{Q}}
\newcommand{\bbR}{\mathbb{R}}
\newcommand{\bbT}{\mathbb{T}}
\newcommand{\bbZ}{\mathbb{Z}}
\newcommand{\sfE}{\mathbb{E}}
\newcommand{\sfP}{\mathbb{P}}
\newcommand{\sfQ}{\mathbb{Q}}
\newcommand{\rmH}{\mathrm{H}}
\newcommand{\rmS}{\mathrm{S}}
\renewcommand{\d}{\mathrm{d}}
\newcommand{\rme}{\mathrm{e}}
\newcommand{\rmh}{\mathrm{h}}
\newcommand{\rmi}{\mathrm{i}}
\newcommand{\F}{\mathcal{F}}
\newcommand{\calM}{\mathcal{M}}
\newcommand{\calN}{\mathcal{N}}
\newcommand{\calO}{\mathcal{O}}
\newcommand{\U}{\mathcal{U}}
\newcommand{\X}{\mathcal{X}}
\newcommand{\Y}{\mathcal{Y}}
\newcommand{\Z}{\mathcal{Z}}
\newcommand{\G}{\Gamma}
\renewcommand{\O}{\Omega}
\renewcommand{\S}{\Sigma}
\renewcommand{\a}{\alpha}
\renewcommand{\b}{\beta}
\newcommand{\eps}{\varepsilon}
\newcommand{\g}{\gamma}
\renewcommand{\l}{\lambda}
\renewcommand{\o}{\omega}
\newcommand{\s}{\sigma}
\renewcommand{\phi}{\varphi}
\renewcommand{\k}{\kappa}
\newcommand{\ol}[1]{\overline{#1}}
\newcommand{\fin}{\nolinebreak\hspace{\stretch{1}}$\lhd$}
\newcommand{\actson}{\curvearrowright}
\renewcommand{\to}{\longrightarrow}
\newcommand{\onto}{\twoheadrightarrow}
\renewcommand{\t}{\widetilde}
\newcommand{\lws}{\stackrel{\rm{lw}\ast}{\to}}
\newcommand{\q}{\stackrel{\rm{q}}{\to}}
\newcommand{\dq}{\stackrel{\rm{dq}}{\to}}
\newcommand{\aL}{\stackrel{\rm{aL}}{\to}}
\renewcommand{\Pr}{\mathrm{Prob}}
\begin{document}


\title{\textbf{\Large{The geometry of model spaces for probability-preserving actions of sofic groups}}}
\author{Tim Austin}

\date{}

\maketitle

\begin{abstract}
Bowen's notion of sofic entropy is a powerful invariant for classifying probability-preserving actions of sofic groups. It can be defined in terms of the covering numbers of certain metric spaces associated to such an action, the `model spaces'.

The metric geometry of these model spaces can exhibit various interesting features, some of which provide other invariants of the action.  This paper explores an approximate connectedness property of the model spaces, and uses it give a new proof that certain groups admit factors of Bernoulli shifts which are not Bernoulli.  This was originally proved by Popa.  Our proof covers fewer examples than his, but provides additional information about this phenomenon.
\end{abstract}

\noindent MSC(2010): 37A35 (primary), 28D20, 37A25, 37A50, 60J10 (secondary)

\vspace{7pt}

\noindent Kewords: Sofic entropy, Bernoulli system, Popa factor, model spaces, connectedness

\setcounter{tocdepth}{1}
\tableofcontents

\section{Introduction}

Let $G$ be a discrete sofic group, $(X,\mu)$ a standard probability space, and ${T:G\actson X}$ a measurable action preserving $\mu$.  The triple $(X,\mu,T)$ is called a \textbf{$G$-system} or just a \textbf{system}. Fix a choice of sofic approximation $\S = (\s_n)_{n\geq 1}$ to $G$, where each $\s_n$ is a map from $G$ to $\rm{Sym}(V_n)$ for some finite set $V_n$.

The sofic entropy of $(X,\mu,T)$ relative to this sofic approximation gives a generalization of the classical Kolmogorov-Sinai entropy which is still available if $G$ is not amenable.  If $(X,\mu,T)$ has a finite generating partition, then its sofic entropy rel $\S$ was defined by Lewis Bowen in~\cite{Bowen10}, and this definition was generalized to any $G$-system by Kerr and Li in~\cite{KerLi11b,KerLi13}.  In case $G$ is amenable, the agreement between sofic entropy rel $\S$ and Kolmogorov-Sinai entropy was shown in~\cite{Bowen12}.

The present paper introduces a new invariant of $G$-systems, constructed from similar ingredients to the sofic entropy.  Our starting point is the formulation of sofic entropy given in~\cite{Aus--soficentadd}, which slightly modifies~\cite{KerLi13} without changing the resulting entropy values.

In the first place, we consider a special class of $G$-systems.  A \textbf{metric $G$-process} is a quadruple $(\X^G,\mu,S,d)$ in which $\X$ is another standard measurable space, $d$ is a compact metric on $\X$ whose Borel sets are the $\s$-algebra of $\X$, $S$ is the right-shift action on $\X^G$, and $\mu$ is an $S$-invariant probability.  Then $\rmh_\S(\mu)$ is defined in terms of the covering numbers of certain metric spaces of `good models' for $\mu$ over $\S$.  The `good models' used in~\cite{Aus--soficentadd} differ slightly from their earlier counterparts in~\cite{KerLi11b,KerLi13}, but it is shown in~\cite[Subsection 3.2]{Aus--soficentadd} that one always obtains the same values for sofic entropy as in those earlier papers.  The resulting entropy value $\rmh_\S(\mu)$ does not depend on the choice of $d$ and is an isomorphism-invariant of the $G$-system $(\X^G,\mu,S)$.  Since any $G$-system is isomorphic to a shift-system, this lets us extend the definition of $\rmh_\S$ unambiguously to arbitrary $G$-systems.

In the approach of~\cite{Aus--soficentadd}, and similarly to the other approaches, `good models' for $(\X^G,\mu,S,d)$ are certain elements of the finite-dimensional Cartesian powers $\X^{V_n}$.  Each $\bf{x} \in \X^{V_n}$ has an `empirical distribution' $P^{\s_n}_\bf{x}$ which may be used to describe its `local statistics' over certain subsets of $V_n$: see Subsection~\ref{subs:AL}.  It is a `good model' if its empirical distribution is close to $\mu$ in the weak$^\ast$ topology on $\Pr(\X^G)$.  For any weak$^\ast$-neighbourhood $\calO$ of $\mu$, let
\[\O(\calO,\s_n) := \{\bf{x} \in \X^{V_n}:\ P^{\s_n}_\bf{x} \in \calO\}\]
be the set of models which are `good' according to this neighbourhood $\calO$.

Also, for each $n$, let $d^{(V_n)}$ be the Hamming average of copies of $d$ on $\X^{V_n}$:
\[d^{(V_n)}(\bf{x},\bf{x}') = \frac{1}{|V_n|}\sum_{v \in V_n}d(x_v,x'_v) \quad \hbox{for}\ \bf{x} = (x_v)_{v \in V_n},\ \bf{x}' = (x'_v)_{v \in V_n}.\]
Sofic entropy is defined in terms of certain asymptotic geometric features of the sequences of metric spaces
\[\big(\O(\calO,\s_n),d^{(V_n)}\big).\]
Specifically, it is the quantity
\[\rmh_\S(\mu) := \sup_{\delta > 0}\ \inf_{\calO}\ \limsup_{n\to\infty}\frac{1}{|V_n|}\log \rm{cov}_\delta\big(\O(\calO,\s_n),d^{(V_n)}\big),\]
where $\rm{cov}_\delta$ is the $\delta$-covering number of the given metric space and $\calO$ runs over all weak$^\ast$-neighbourhoods of $\mu$.

The original motivation for studying sofic entropy was the classification of Bernoulli systems: see~\cite{Bowen10,KerLi11a}.  It has quickly found numerous other applications, and suggests many directions for further research.  One of these is to look for other properties of the metric spaces $(\O(\calO,\s_n),d^{(V_n)})$ that carry some dynamical information about $\mu$, and might give rise to other invariants.

The present paper focuses on one such property, which we refer to as `connected model spaces rel $\S$'.  It is an approximate kind of connectedness for the spaces $\O(\calO,\s_n)$.  In case $\X$ is finite, these metric spaces are discrete, so we do not ask for their connectedness as in classical point-set topology. Rather, we consider whether connectedness holds in a suitable asymptotic sense as $\calO$ becomes smaller and $n\to\infty$.  The precise notion we need is given in Definition~\ref{dfn:connected-model}.

Once that definition has been made, we can show that the property of connected model spaces rel $\S$ is an isomorphism-invariant.  In fact we prove more than this.  Some factor maps do not preserve the property of connected model spaces, but we introduce a class of factor maps which do, and show that they include all isomorphisms.

Given a factor map
\[\Phi:(\X^G,\mu,S) \to (\Y^G,\nu,S),\]
and also compact metrics $d_\X$ on $\X$ and $d_\Y$ on $\Y$ which generate their $\s$-algebras, the developments in~\cite{Aus--soficentadd} include the construction of suitable `approximating maps' from $\X^{V_n}$ to $\Y^{V_n}$.  These send good models for $\mu$ to good models for $\nu$, up to various errors that must be carefully controlled in terms of the metrics $d_\X^{(V_n)}$ and $d_\Y^{(V_n)}$.  Put roughly, we say that $\Phi$ is `model-surjective rel $\S$' if \emph{all} good models for $(\Y^G,\nu,S)$ are close to images of good models for $(\X^G,\mu,S)$ under these approximating maps.  This is made precise in Definition~\ref{dfn:model-surj}.

Having introduced this special property of factor maps, we will show that it is preserved by composition, and that all isomorphisms have this property.  Then we prove the following.

\vspace{7pt}

\noindent\textbf{Theorem A}\quad \emph{Let $\Phi$ be a factor map as above. If $(\X^G,\mu,S,d_\X)$ has connected model spaces rel $\S$ and $\Phi$ is model-surjective rel $\S$, then $(\Y^G,\nu,S,d_\Y)$ also has connected model spaces rel $\S$.}

\emph{In particular, having connected model spaces rel $\S$ is a property only of the shift-system $(\X^G,\mu,S)$, not depending on the choice of $d_\X$, and is an isomorphism-invariant.}

\vspace{7pt}

Having proved Theorem A, the property of having connected model spaces rel $\S$ can be extended unambiguously to any $G$-system, by picking an isomorphism from it to $G$-process.

Although factor maps need not preserve the connectedness of model spaces rel $\S$, it turns out inverse limits do.

\vspace{7pt}

\noindent\textbf{Theorem B}\quad \emph{If all members of an inverse sequence of $G$-systems have connected model spaces rel $\S$, then so does the inverse limit.}

\vspace{7pt}

The rest of the paper is given to some examples of connected and non-connected model spaces.  In the first place, we have the following.

\vspace{7pt}

\noindent\textbf{Theorem C}\quad \emph{For any sofic group $G$ and sofic approximation $\S$, Bernoulli systems over $G$ have connected model spaces rel $\S$.}

\vspace{7pt}

On the other hand, for some groups $G$ and sofic approximations $\S$ there are factors of Bernoulli shifts that do not have connected model spaces (and so, in particular, not all factor maps are model-surjective).  The last section of the paper is given to a family of such examples.

Let $m$ be the Haar probability measure on the circle $\bbT$, and consider the factor map of $G$-systems
\begin{eqnarray}\label{eq:Popa}
(\bbT^G,m^{\times G},S)\to (X,\mu,T)
\end{eqnarray}
defined by forming the quotient by the diagonal subgroup of $\bbT^G$: that is,
\[X = \bbT^G/\{(\ldots,\theta,\theta,\ldots):\,\theta \in \bbT\},\]
$\mu$ is the Haar measure on this quotient group, and $T$ is the quotient action.

\vspace{7pt}

\noindent\textbf{Theorem D}\quad  \emph{If $G$ is a residually finite group with Kazhdan's property (T), then $G$ has a sofic approximation $\S$ relative to which the factor system $(X,\mu,T)$ constructed above does not have connected model spaces.}

\vspace{7pt}

In particular, this implies that $(X,\mu,T)$ is not Bernoulli.

These examples form a special case of a construction of non-Bernoulli factors of Bernoulli shifts due to Popa~\cite{Pop06b}, so for general $G$ we refer to $(X,\mu,T)$ as the \textbf{Popa factor} of $(\bbT^G,m^{\times G},S)$.  These factors show a striking difference between ergodic theory for amenable and non-amenable groups: among the former, factors of Bernoulli shifts are always still Bernoulli~\cite{OrnWei87}.  Popa's proof in~\cite{Pop06b} gives examples for a considerably larger range of groups $G$ than those in Theorem D.  It rests on calculations of the cohomology of these actions, generalizing work of Popa and Sasyk~\cite{PopSas07}.  Our proof of Theorem D also uses some (more elementary) cohomology theory, and it covers fewer examples, but it gives a geometric interpretation to the non-Bernoullicity of these factors.

As pointed out to me by Brandon Seward, combining Theorems B, C and D immediately gives the following.

\vspace{7pt}

\noindent\textbf{Corollary D$^\prime$}\quad \emph{If $G$ is as in Theorem D, then there are factors of Bernoulli $G$-systems which are not inverse limits of Bernoulli $G$-systems. \qed}

\vspace{7pt}

This contrasts with actions of amenable groups, among which inverse limits of Bernoulli systems, like factors, are also always still Bernoulli~\cite{OrnWei87}.

In the reverse direction, there are many groups which admit inverse limits of Bernoulli shifts which are not factors of Bernoulli shifts.  Indeed, if $G$ has a non-amenable free subgroup, then Bowen has shown~\cite{Bowen11weakiso} that every Bernoulli shift over $G$ factors onto every other Bernoulli shift (this property may in fact hold for all non-amenable $G$: see~\cite[Corollary 1.6]{Bowen12almostOrn} for partial progress).  One may therefore form an inverse sequence
\[\cdots \to (A_3^G,p_3^{\times G},S) \to (A_2^G,p_2^{\times G},S) \to (A_1^G,p_1^{\times G},S) \]
of Bernoulli systems in which each $A_n$ is a finite alphabet and the Shannon entropies $\rmH(p_n)$ have finite sum.  Now the inverse limit of this sequence has generating partitions of arbitrarily small Shannon entropy, by joining the generating partitions of all systems that are sufficiently high up the sequence.  As a result, that inverse limit has Rokhlin entropy zero, and hence also sofic entropy at most zero~\cite{Seward--KriI,Seward--KriII}.  On the other hand, every factor of a Bernoulli shift has positive sofic entropy~\cite{Ker13}.

Our tools can also be used to prove another strengthening of Theorem D.  Recall that a factor map $\Phi:(X,\mu,T)\to (Y,\nu,S)$ of $G$-systems is \textbf{complemented} if there is another factor map $\Psi:(X,\mu,T)\to (Z,\theta,R)$ such that the combined map $(\Phi,\Psi)$ is a measure-theoretic isomorphism
\[(X,\mu,T) \stackrel{\cong}{\to} (Y\times Z,\nu\times \theta,S\times R).\]
In this case $(Y,\nu,S)$ is a \textbf{complemented factor} of $(X,\mu,T)$.

The Popa factors give examples of factor maps of Bernoulli shifts that are not model-surjective.  However, it turns out that all complemented factor maps of Bernoulli shifts (and actually of a much more general class of systems) are model-surjective: see Theorem~\ref{thm:comp-fact-model-surj}.  Combined with Theorems A and C, this shows the following.

\vspace{7pt}

\noindent\textbf{Corollary D$^{\prime\prime}$}\quad \emph{If $G$ is as in Theorem D then the Popa factor $(X,\mu,T)$ is not a complemented factor of any Bernoulli shift.}

\vspace{7pt}

This corollary was also suggested to me by Brandon Seward.  It can also be proved using Popa's cohomological approach, as has been shown to me by Yongle Jiang.  The main new ingredient is the fact that, if $(X,\mu,T)$ is a complemented factor of any Bernoulli shift $(\X^G,\nu^{\times G},S)$, then the resulting inclusion homomorphism from the degree-one, $\bbT$-valued cohomology of the former into the latter is injective.  Similar arguments can be found in Jiang's recent paper~\cite{Jia}.

Finally, the above discussion suggests the following.

\begin{ques}
Is it true that for any group $G$, any complemented factor of a Bernoulli system is Bernoulli? \fin
\end{ques}

I do not know that the answer is Yes for any non-amenable group.

%
%
%
%
%
%
%
%
%
%

\section{Connected model spaces}

\subsection{Model spaces and maps between them}\label{subs:AL}

This part of the paper follows closely the approach to sofic entropy developed in~\cite[Part I]{Aus--soficentadd}, and mostly uses the same notation.

We use `big-$O$' and `little-$o$' notation without further comment. Among real numbers, we sometimes write `$a \approx_\eps b$' in place of `$|a-b| < \eps$'.

If $(X,d_X)$ and $(Y,d_Y)$ are two metric spaces, $\eps > 0$ and $L < \infty$, then a map $\phi:X \to Y$ is \textbf{$\eps$-almost $L$-Lipschitz} if
\[d_Y(\phi(x),\phi(x')) \leq \eps + Ld_X(x,x') \quad \forall x,x' \in X.\]
A map is \textbf{$\eps$-almost Lipschitz} if this holds for some $L$.

We next recall some of the basic definitions and results from~\cite[Part I]{Aus--soficentadd}.  They are mostly small modifications to~\cite{Bowen10,KerLi13}.  Suppose that $(\X^G,\mu,S,d)$ is a \textbf{metric $G$-process}, meaning that $(\X,d)$ is a compact metric space, $S$ is the right-shift action of $G$ on $\X^G$, and $\mu \in \Pr(\X^G)$ is $S$-invariant.

Given a finite set $V$ and a map $\s:G\to \rm{Sym}(V)$, and also $v \in V$ and $\bf{x} \in \X^V$, we define the \textbf{pullback name of $\bf{x}$ at $v$} by
\[\Pi^\s_v(\bf{x}) := (x_{\s^g(v)})_{g\in G} \in \X^G.\]
In terms of these, the \textbf{empirical distribution} of $\bf{x}$ is
\[P^\s_\bf{x} := \frac{1}{|V|}\sum_{v\in V}\delta_{\Pi^\s_v(\bf{x})}.\]
For any w$^\ast$-neighbourhood $\calO$ of $\mu$ in $\Pr(\X^G)$, the \textbf{$\calO$-good models for $\mu$} are the elements of
\[\O(\calO,\s_n) := \{\bf{x}:\ P^\s_\bf{x} \in \calO\}.\]
Motivation for these concepts and their use in defining sofic entropy can be found in~\cite[Subsection 3.1]{Aus--soficentadd}.

Given any map $\psi:\X^G\to \Y$ and also a map $\s:G\to \rm{Sym}(V)$ for some finite set $V$, we define the associated map $\psi^\s:\X^V\to \Y^V$ by
\[\psi^\s(\bf{x}) := \big(\psi(\Pi^\s_v(\bf{x}))\big)_{v\in V},\]
as in~\cite[Subsection 4.2]{Aus--soficentadd}.

Now let $\Phi = \phi^G:(\X^G,\mu,S,d_\X) \to (\Y^G,\nu,S,d_\Y)$ be a factor map of metric $G$-processes, where we use the same notation as in~\cite[Subsection 4.1]{Aus--soficentadd}.  As in that reference, an \textbf{$\eta$-almost Lipschitz} (or \textbf{$\eta$-AL}) \textbf{approximation to $\phi$ rel $(\mu,d_\X,d_\Y)$} is a measurable map $\psi:\X^G \to \Y$ with the following properties.
\begin{itemize}
\item[i)] The map $\psi$ approximates $\phi$ in the sense that
\begin{eqnarray}\label{eq:int-approx}
\int d_\Y(\phi(x),\psi(x))\,\mu(\d x) < \eta.
\end{eqnarray}
\item[ii)] There is a finite $D \subseteq G$ such that $\psi$ is $D$-local: that is, it depends only on coordinates in $D$.
\item[iii)] There is a $D$-local open subset $U \subseteq \X^G$ such that $\mu(U) > 1 -\eta$ and such that $\psi|U$ is $\eta$-almost Lipschitz from $d_\X^{(D)}$ to $d_\Y$.
\end{itemize}

These always exist for every $\eta$~\cite[Lemma 4.3]{Aus--soficentadd}.  In the applications below, we will more often consider sequences of such approximations: an \textbf{almost Lipschitz approximating sequence for $\phi$ rel $(\mu,d_\X,d_\Y)$} is a sequence of maps $\psi_k:\X^G\to \Y$ which are $\eta_k$-AL approximations to $\phi$ rel $(\mu,d_\X,d_\Y)$ for some positive parameters $\eta_k \to 0$.  This situation is denoted by $\psi_k \aL \phi$.

Although we will be using such maps to study isomorphism-invariant properties of systems, the definitions above depend crucially on the choice of the compact metrics $d_\X$ and $d_\Y$.

Later in this section, we will need some estimates from~\cite{Aus--soficentadd} concerning AL approximations and the corresponding maps on model spaces.  Suitable versions are recalled in the following lemmas.

\begin{lem}[Good models are mapped to good models:~{\cite[Proposition 4.10]{Aus--soficentadd}}]\label{lem:approx-by-Lip-maps}
For every w$^\ast$-neighbourhood $\calN$ of $\nu$ there is an $\eta > 0$ with the following property.  If $\psi$ is an $\eta$-AL approximation to $\phi$ rel $(\mu,d_\X,d_\Y)$, then there is a w$^\ast$-neighbourhood $\calO$ of $\mu$ such that
\[\psi^{\s_n}\big(\O(\calO,\s_n)\big) \subseteq \O(\calN,\s_n)\]
for all sufficiently large $n$. \qed
\end{lem}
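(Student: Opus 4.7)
The plan is to reduce the condition $P^{\s_n}_{\psi^{\s_n}(\bf{x})} \in \calN$ to estimating one integral. The weak$^\ast$ topology on $\Pr(\Y^G)$ has a subbase of half-spaces $\{\nu' : |\smint f\,\d\nu' - \smint f\,\d\nu| < \eps\}$ indexed by bounded Lipschitz functions $f$ on $\Y^G$ depending only on coordinates in some finite $F \subseteq G$, so it suffices to handle one such $f$. Unfolding definitions,
\[\smint f\,\d P^{\s_n}_{\psi^{\s_n}(\bf{x})} = \frac{1}{|V_n|}\sum_{v \in V_n} f\bigl((\psi(\Pi^{\s_n}_{\s^g_n(v)}(\bf{x})))_{g \in F}\bigr).\]
Because $\psi$ is $D$-local for some finite $D \subseteq G$, each summand depends on $\bf{x}$ only through $x_{\s^h_n(\s^g_n(v))}$ for $(h,g) \in D \times F$. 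The sofic property forces $\s^h_n \circ \s^g_n = \s^{hg}_n$ outside an $o(|V_n|)$ set for each such pair, so up to an $o(1)$ error proportional to $\|f\|_\infty$ the average above equals $\smint \t f\,\d P^{\s_n}_{\bf{x}}$, where $\t f(\bf{z}) := f((\psi(S^g\bf{z}))_{g \in F})$.

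To transfer from $P^{\s_n}_{\bf{x}}$ to $\mu$ using $\bf{x}\in\O(\calO,\s_n)$, the test function $\t f$ must be regular; this is where the almost-Lipschitz hypothesis enters. On the $D$-local open set $U$ with $\mu(U) > 1-\eta$, $\psi$ is $\eta$-almost Lipschitz, so a McShane-type extension produces a bounded continuous map $\hat\psi:\X^G\to\Y$ agreeing with $\psi$ on $U$; then $\hat f(\bf{z}) := f((\hat\psi(S^g\bf{z}))_{g\in F})$ is continuous on $\X^G$ and differs from $\t f$ only on the closed set $B := \bigcup_{g\in F} S^{-g}(\X^G\setminus U)$ with $\mu(B) \leq |F|\eta$. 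The remaining estimates form a chain: $\smint\t f\,\d P^{\s_n}_{\bf{x}} \approx \smint\hat f\,\d P^{\s_n}_{\bf{x}}$ up to $O(\|f\|_\infty \cdot P^{\s_n}_{\bf{x}}(B))$; $\smint\hat f\,\d P^{\s_n}_{\bf{x}} \approx \smint\hat f\,\d\mu$ by continuity of $\hat f$; $\smint\hat f\,\d\mu \approx \smint f\circ\Phi\,\d\mu$ up to $O(\mathrm{Lip}(f)\cdot|F|\cdot\eta)$ via~(\ref{eq:int-approx}), a McShane bound, and $S$-invariance of $\mu$; and $\smint f\circ\Phi\,\d\mu = \smint f\,\d\nu$ since $\nu=\Phi_\ast\mu$.

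To assemble the chain I choose $\eta$ first, depending only on $\eps$, $\|f\|_\infty$, $\mathrm{Lip}(f)$, $|F|$, so that the $\eta$-level errors fall below $\eps/4$. Then $\psi$ determines $U$ and $\hat\psi$, and I choose the weak$^\ast$-neighbourhood $\calO$ of $\mu$ small enough that both $|\smint\hat f\,\d P^{\s_n}_{\bf{x}} - \smint\hat f\,\d\mu|$ and an upper bound on $P^{\s_n}_{\bf{x}}(B)$ are suitably controlled for every $\bf{x}\in\O(\calO,\s_n)$; finally $n$ is taken large for the sofic error. The main technical point is the asymmetry in the first step of the chain: because $B$ is closed, weak$^\ast$ convergence yields only $\limsup_n P^{\s_n}_{\bf{x}}(B) \leq \mu(B)$, so to secure a uniform bound over $\bf{x}\in\O(\calO,\s_n)$ I must build into $\calO$ a defining inequality of the form $\int g\,\d\mu' < |F|\eta + \eps'$ for some continuous majorant $g$ of $\mathbf{1}_B$. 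This matches the quantifier ordering of the lemma precisely: $\eta$ depends only on $\calN$, while $\calO$ is allowed to depend on the specific $\eta$-AL approximation $\psi$.
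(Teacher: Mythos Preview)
The paper does not supply a proof of this lemma: it is quoted verbatim from~\cite[Proposition~4.10]{Aus--soficentadd} and closed with a \qed, so there is nothing here to compare against line by line.  Your argument is the standard one in this setting --- unfold the empirical distribution of $\psi^{\s_n}(\bf{x})$, use the asymptotic multiplicativity of $\s_n$ to rewrite it as an integral against $P^{\s_n}_{\bf{x}}$ of a $DF$-local function $\t f$, replace $\t f$ by a continuous surrogate $\hat f$, and then let the definition of $\calO$ absorb the remaining error --- and the chain of estimates is organized correctly, including the careful handling of the closed bad set $B$ via a continuous majorant built into $\calO$.  The quantifier discipline ($\eta$ chosen from $\calN$ alone; $\calO$ allowed to depend on $\psi$) is exactly right.

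One technical point deserves a cleaner justification.  You invoke a ``McShane-type extension'' to produce a continuous $\hat\psi:\X^G\to\Y$ agreeing with $\psi$ on $U$.  McShane's theorem is for real-valued Lipschitz functions; for maps into a general compact metric space $\Y$ there need not be any continuous extension from $\bar U$ to $\X^G$ unless $\Y$ happens to be an absolute retract.  The fix is routine: since $\psi$ is $D$-local and $\eta$-almost Lipschitz on $U$, it extends by uniform continuity to $\bar U\subseteq\X^D$; now embed $\Y$ isometrically into $\ell^\infty$ (or the Hilbert cube), extend coordinatewise by Tietze to get $\hat\psi:\X^D\to\ell^\infty$, and simultaneously extend $f$ from $\Y^F$ to a bounded continuous real-valued function on $(\ell^\infty)^F$.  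The composite $\hat f$ is then continuous on $\X^G$ and agrees with $\t f$ on $\X^G\setminus B$, which is all your chain needs.  With that adjustment the proof is complete.
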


\begin{lem}[Almost Lipschitz action on model spaces:~{\cite[Lemma 4.9]{Aus--soficentadd}}]\label{lem:Lip-still-Lip}
If $\psi$ is an $\eta$-AL approximation to $\phi$ rel $(\mu,d_\X,d_\Y)$, then there are $K < \infty$ and a w$^\ast$-neighbourhood $\calO$ of $\mu$ such that
\[\psi^{\s_n}|\O(\calO,\s_n)\]
is $(3\eta)$-almost $K$-Lipschitz from $d_\X^{(V_n)}$ to $d_\Y^{(V_n)}$ for all sufficiently large $n$. \qed
\end{lem}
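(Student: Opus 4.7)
The plan is to unpack the definition of an $\eta$-AL approximation and estimate $d_\Y^{(V_n)}(\psi^{\s_n}(\bf{x}), \psi^{\s_n}(\bf{x}'))$ vertex by vertex, splitting the sum over $v \in V_n$ into those indices for which both pullback names $\Pi^{\s_n}_v(\bf{x})$ and $\Pi^{\s_n}_v(\bf{x}')$ lie inside the open set $U$ from condition (iii), and those for which at least one does not. Let $L < \infty$, $D \subseteq G$ finite, and $U \subseteq \X^G$ be witnesses to $\psi$ being an $\eta$-AL approximation, so $U$ is $D$-local open, $\mu(U) > 1 - \eta$, and $\psi|U$ is $\eta$-almost $L$-Lipschitz from $d_\X^{(D)}$ to $d_\Y$. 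Following the standard normalization in this setting, I would also assume $\mathrm{diam}(d_\Y) \leq 1$.

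First I would choose the weak$^\ast$-neighbourhood $\calO := \{\nu' \in \Pr(\X^G) : \nu'(U) > 1 - \eta\}$. Since $U$ is open, the map $\nu' \mapsto \nu'(U)$ is weak$^\ast$ lower semicontinuous, so $\calO$ is weak$^\ast$ open; it contains $\mu$ by condition (iii). For any $\bf{x}, \bf{x}' \in \O(\calO, \s_n)$, set
\[
V^{\rm good} := \{v \in V_n : \Pi^{\s_n}_v(\bf{x}) \in U \text{ and } \Pi^{\s_n}_v(\bf{x}') \in U\},
\]
and observe directly from the definition of the empirical distribution that
\[
\frac{|V_n \setminus V^{\rm good}|}{|V_n|} \leq P^{\s_n}_\bf{x}(\X^G \setminus U) + P^{\s_n}_{\bf{x}'}(\X^G \setminus U) < 2\eta.
\]

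Next I would expand
\[
d_\Y^{(V_n)}(\psi^{\s_n}(\bf{x}), \psi^{\s_n}(\bf{x}')) = \frac{1}{|V_n|}\sum_{v \in V_n} d_\Y\big(\psi(\Pi^{\s_n}_v(\bf{x})), \psi(\Pi^{\s_n}_v(\bf{x}'))\big).
\]
For $v \in V^{\rm good}$, the almost Lipschitz property of $\psi|U$ bounds the summand by $\eta + L\,d_\X^{(D)}(\Pi^{\s_n}_v(\bf{x}), \Pi^{\s_n}_v(\bf{x}'))$. Summing over $v \in V^{\rm good}$, extending the $d_\X^{(D)}$-sum to all of $V_n$ (nonnegative terms), and swapping the outer sum with the sum over $g \in D$ implicit in $d_\X^{(D)}$, one uses that each $\s_n^g$ is a \emph{permutation} of $V_n$ to get $\sum_v d_\X(x_{\s_n^g(v)}, x'_{\s_n^g(v)}) = |V_n|\,d_\X^{(V_n)}(\bf{x}, \bf{x}')$, and hence $\sum_v d_\X^{(D)}(\Pi^{\s_n}_v(\bf{x}), \Pi^{\s_n}_v(\bf{x}')) = |V_n|\,d_\X^{(V_n)}(\bf{x}, \bf{x}')$. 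The good contribution is therefore at most $\eta + L\,d_\X^{(V_n)}(\bf{x}, \bf{x}')$. For $v \notin V^{\rm good}$ I bound each summand by $\mathrm{diam}(d_\Y) \leq 1$, so the bad contribution is at most $2\eta$. Adding yields the desired estimate with $K := L$.

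I do not expect a serious obstacle. The only point needing care is the choice of $\calO$, which must force the empirical distributions of both models to place nearly full mass on $U$ simultaneously; the openness of $U$ is essential, since it is what makes $\{\nu' : \nu'(U) > 1 - \eta\}$ weak$^\ast$ open. The normalization $\mathrm{diam}(d_\Y) \leq 1$ is what yields the constant $3\eta$ exactly; without it the bound takes the form $(1 + 2\,\mathrm{diam}(d_\Y))\eta + L\,d_\X^{(V_n)}(\bf{x}, \bf{x}')$, still $O(\eta)$ but with a different prefactor. Note also that the argument uses no sofic-approximation error: each $\s_n^g$ is by definition a genuine permutation, so the estimate is in fact valid for every $n$, the ``sufficiently large $n$'' in the statement being included only for uniformity with Lemma~\ref{lem:approx-by-Lip-maps}.
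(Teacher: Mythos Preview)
Your argument is correct. Note that in this paper the lemma is only \emph{quoted} from~\cite[Lemma 4.9]{Aus--soficentadd} (hence the terminal \qed\ with no proof), so there is no proof here to compare against; but what you have written is exactly the natural proof and matches the one in the cited reference: split the vertices according to whether both pullback names land in $U$, use the $\eta$-almost $L$-Lipschitz bound on the good part together with the permutation identity $\sum_v d_\X(x_{\s_n^g(v)},x'_{\s_n^g(v)}) = |V_n|\,d_\X^{(V_n)}(\bf{x},\bf{x}')$, and bound the bad part by $2\eta\cdot\mathrm{diam}(d_\Y)$. Your remarks about the normalization $\mathrm{diam}(d_\Y)\leq 1$ and about the estimate holding for all $n$ (not just large $n$) are also accurate.
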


For our later application of these results, it is convenient to combine them into a single assertion in terms of bases for the w$^\ast$-topologies around $\mu$ and $\nu$.  Since those topologies are metrizable, there exist such bases which are decreasing sequences of open neighbourhoods.  We will frequently use this fact without further explanation.

\begin{cor}\label{cor:combined-good-behav}
Suppose that $\psi_k \aL \phi$ rel $(\mu,d_\X,d_\Y)$.  Then there are
\begin{itemize}
\item parameters $\eps_k \downarrow 0$ and $K_k < \infty$,
\item a sequence of w$^\ast$-neighbourhoods $\calO_1 \supseteq \calO_2 \supseteq \dots$ of $\mu$,
\item and a base of w$^\ast$-neighbourhoods $\calN_1 \supseteq \calN_2 \supseteq \dots$ at $\nu$
\end{itemize}
such that for each $k$ both of the following hold for all sufficiently large $n$:
\[\psi_k^{\s_n}\big(\O(\calO_k,\s_n)\big) \subseteq \O(\calN_k,\s_n)\]
and
\[\psi_k^{\s_n}|\O(\calO_k,\s_n) \quad \hbox{is $\eps_k$-almost $K_k$-Lipschitz from $d_\X^{(V_n)}$ to $d_\Y^{(V_n)}$}.\]
\end{cor}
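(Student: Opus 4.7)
The plan is a bookkeeping argument: Lemmas~\ref{lem:approx-by-Lip-maps} and~\ref{lem:Lip-still-Lip} already supply everything needed for a single $\psi_k$, and the only task is to arrange their outputs into the nested sequences demanded by the statement. First I would fix, once and for all, a decreasing w$^\ast$-basis $\calM_1 \supseteq \calM_2 \supseteq \dots$ at $\nu$, and apply Lemma~\ref{lem:approx-by-Lip-maps} with $\calN := \calM_j$ to obtain a threshold $\tau_j > 0$ for each $j$. Since the three conditions defining an $\eta$-AL approximation are all monotone in $\eta$, $\psi_k$ is still $\tilde\eta_k$-AL when $\eta_k$ is replaced by the dominating monotone sequence $\tilde\eta_k := \sup_{j \geq k}\eta_j$, which also decreases to $0$.

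Next I would couple the two indexings by setting
\[ j(k) := \max\{j \leq k : \tilde\eta_k < \tau_j\}, \]
with the finitely many ill-defined initial values set to $1$. Because $\tilde\eta_k \to 0$, for every fixed $J$ one has $\tilde\eta_k < \tau_J$ once $k$ is large, so $j(k) \to \infty$; and the monotonicity of $\tilde\eta_k$ together with the expanding range of $j$ makes $k \mapsto j(k)$ non-decreasing. Setting $\calN_k := \calM_{j(k)}$ therefore yields a decreasing sequence which is still a basis at $\nu$, and by construction $\tilde\eta_k < \tau_{j(k)}$.

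With $\calN_k$ chosen, a second appeal to Lemma~\ref{lem:approx-by-Lip-maps} (now that $\psi_k$ is $\tau_{j(k)}$-AL) produces a w$^\ast$-neighbourhood $\calO^{\mathrm{map}}_k$ of $\mu$ satisfying $\psi_k^{\s_n}(\O(\calO^{\mathrm{map}}_k,\s_n)) \subseteq \O(\calN_k,\s_n)$ for all sufficiently large $n$, while Lemma~\ref{lem:Lip-still-Lip} produces some $K_k < \infty$ and a neighbourhood $\calO^{\mathrm{Lip}}_k$ on whose good models $\psi_k^{\s_n}$ is $(3\tilde\eta_k)$-almost $K_k$-Lipschitz. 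I would then close the argument by taking cumulative intersections $\calO_k := \calO_{k-1} \cap \calO^{\mathrm{map}}_k \cap \calO^{\mathrm{Lip}}_k$, with $\calO_0 := \Pr(\X^G)$, and setting $\eps_k := 3\tilde\eta_k$; this delivers a decreasing $\calO_k$ and $\eps_k \downarrow 0$, and both conclusions then hold for each $k$ and all sufficiently large $n$. The only point that needs a moment's care is ensuring $j(k) \to \infty$ so that $\{\calN_k\}$ remains a basis at $\nu$, but this is immediate from $\tilde\eta_k \to 0$, so there is no genuine obstacle in the argument.
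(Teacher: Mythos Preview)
Your proof is correct and follows essentially the same bookkeeping route as the paper: fix a decreasing basis $(\calM_j)$ at $\nu$, use Lemma~\ref{lem:approx-by-Lip-maps} to match each $\psi_k$ to a suitable $\calM_{j(k)}$, then intersect the neighbourhoods coming from both lemmas and make them nested. The one wrinkle is the finitely many initial $k$ for which your $j(k)$ is ill-defined: your fallback $j(k):=1$ does not guarantee $\tilde\eta_k < \tau_1$, so the ``second appeal to Lemma~\ref{lem:approx-by-Lip-maps}'' is not justified at those indices; the paper sidesteps this by taking $\calM_1 = \Pr(\Y^G)$, which makes the inclusion $\psi_k^{\s_n}(\cdots)\subseteq \O(\calM_1,\s_n)$ vacuous and removes the issue entirely.
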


\begin{proof}
Let $\eta_k \downarrow 0$ be a sequence such that $\psi_k$ is an $\eta_k$-AL approximation to $\phi$ rel $\mu$ for every $k$, and set $\eps_k := 3\eta_k$ for each $k$.

Let $\calM_1 \supseteq \calM_2 \supseteq \dots$ be a base of w$^\ast$-neighbourhoods at $\nu$ with $\calM_1 = \Pr(\Y^G)$.  By Lemma~\ref{lem:approx-by-Lip-maps}, for each $j\in \bbN$ there are $k_0(j) \in \bbN$ and w$^\ast$-neighbourhoods $\calO'_{j,k}$ for each $k\geq k_0(j)$ such that for every $k\geq k_0(j)$ we have
\[\psi_k^{\s_n}\big(\O(\calO'_{j,k},\s_n)\big) \subseteq \O(\calM_j,\s_n) \quad \hbox{for all sufficiently large $n$}.\]
We may take $k_0(1) = 1$ because $\calM_1 = \Pr(\Y^G)$, and we may also assume that $k_0(j) \to \infty$ as $j\to\infty$. Now let
\[\calN_k := \calM_{\max\{j:\ k_0(j) \leq k\}}\]
and choose w$^\ast$-neighbourhoods $\calO'_k$ of $\mu$ satisfying
\[\calO'_k \subseteq \bigcap_{j:\ k_0(j) \leq k}\calO'_{j,k}.\]
By forming running intersections we may also assume that $\calO'_1 \supseteq \calO'_2 \supseteq \dots$.  For each $k$, letting $j$ be the largest integer for which $k_0(j) \leq k$, it follows that
\[\psi_k^{\s_n}\big(\O(\calO'_k,\s_n)\big) \subseteq \psi_k^{\s_n}\big(\O(\calO'_{j,k},\s_n)\big),\]
and this is contained in $\O(\calM_j,\s_n) = \O(\calN_k,\s_n)$ for all sufficiently large $n$.

On the other hand, Lemma~\ref{lem:Lip-still-Lip} gives values $K_k < \infty$ and w$^\ast$-neighbourhoods $\calO''_k$ of $\mu$ such that $\psi_k^{\s_n}|\O(\calO_k'',\s_n)$ is $\eps_k$-almost $K_k$-Lipschitz for all sufficiently large $n$, for every $k\geq 1$.  Once again we may assume that $\calO''_1 \supseteq \calO''_2 \supseteq \dots$.

Finally, let $\calO_k := \calO'_k \cap \calO''_k$ for each $k$.
\end{proof}

The deduction of this corollary from Lemmas~\ref{lem:approx-by-Lip-maps} and~\ref{lem:Lip-still-Lip} is similar to the deduction of `sequence versions' of those lemmas in~\cite[Subsection 4.3]{Aus--soficentadd}.  But it seems easier to make this deduction from scratch here, rather than adapting the corollaries in that subsection.

Corollary~\ref{cor:combined-good-behav} has a further consequence that is worth recording by itself.

\begin{cor}\label{cor:approx-by-Lip-maps}
If $\psi_k \aL \phi$ rel $(\mu,d_\X,d_\Y)$ and if $\bf{x}_n \in \X^{(V_n)}$ is a sequence of models such that $P^{\s_n}_{\bf{x}_n} \stackrel{\rm{weak}^\ast}{\to} \mu$, then
\[P^{\s_n}_{\psi^{\s_n}_{k_n}(\bf{x}_n)} \stackrel{\rm{weak}^\ast}{\to} \nu\]
whenever the sequence $k_1 \leq k_2 \leq \dots$ grows sufficiently slowly.
\end{cor}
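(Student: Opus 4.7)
My plan is to combine Corollary~\ref{cor:combined-good-behav} with a standard diagonal-sequence argument; the Lipschitz part of that corollary is not needed here, only the clause ``good models are sent to good models''.

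First I would apply Corollary~\ref{cor:combined-good-behav} to the sequence $\psi_k$ to obtain a decreasing sequence $\calO_1 \supseteq \calO_2 \supseteq \dots$ of w$^\ast$-neighbourhoods of $\mu$ and a base $\calN_1 \supseteq \calN_2 \supseteq \dots$ at $\nu$, together with, for each $k$, an integer $n_2(k)$ such that
\[\psi_k^{\s_n}\bigl(\O(\calO_k,\s_n)\bigr) \subseteq \O(\calN_k,\s_n) \quad \hbox{for all $n \geq n_2(k)$}.\]
The hypothesis $P^{\s_n}_{\bf{x}_n} \to \mu$ weak$^\ast$ also yields, for each $k$, some $n_1(k)$ with $\bf{x}_n \in \O(\calO_k,\s_n)$ whenever $n \geq n_1(k)$, simply because $\calO_k$ is a neighbourhood of $\mu$.

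Next I would set $N(k) := \max\{n_1(k),n_2(k)\}$ and
\[K_n := \max\{k : N(k) \leq n\},\]
which is well-defined for all sufficiently large $n$ and satisfies $K_n \to \infty$ since each $N(k)$ is finite. The precise meaning of ``grows sufficiently slowly'' in the corollary statement then becomes: any nondecreasing sequence $k_n \to \infty$ with $k_n \leq K_n$ works. Indeed, for such $(k_n)$ the inequalities $n \geq n_1(k_n)$ and $n \geq n_2(k_n)$ combine to give
\[\psi^{\s_n}_{k_n}(\bf{x}_n) \in \O(\calN_{k_n},\s_n), \quad \hbox{equivalently} \quad P^{\s_n}_{\psi^{\s_n}_{k_n}(\bf{x}_n)} \in \calN_{k_n},\]
and since $\{\calN_k\}_k$ is a base at $\nu$ with $k_n \to \infty$, every w$^\ast$-neighbourhood of $\nu$ contains all but finitely many of these empirical measures.

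The whole argument is essentially index bookkeeping; no serious obstacle is expected. The only points that merit attention are the verifications that $K_n \to \infty$ (immediate from the finiteness of each $N(k)$) and that suitable sequences $(k_n)$ exist (immediate from $K_n \to \infty$, e.g.\ take $k_n = \min\{K_n, \lfloor \log n \rfloor\}$).
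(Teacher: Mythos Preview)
Your proposal is correct and follows the same route as the paper, which likewise uses only the first conclusion of Corollary~\ref{cor:combined-good-behav} and then runs the same diagonal argument. One cosmetic point: replace $N(k)$ by $\max\{N(1),\dots,N(k),k\}$ so that it is strictly increasing, since otherwise $K_n$ may fail to be well-defined (the set $\{k:N(k)\leq n\}$ could be infinite) and the implication $k_n\leq K_n\Rightarrow N(k_n)\leq n$ need not hold.
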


\begin{proof}
This really uses only the first conclusion of Corollary~\ref{cor:combined-good-behav}: there are sequences $\calO_1 \supseteq \calO_2 \supseteq \dots$ and $\calN_1 \supseteq \calN_2 \supseteq \dots$ as in that corollary such that for all $k$ we have
\[\psi_k^{\s_n}\big(\O(\calO_k,\s_n)\big)\subseteq \O(\calN_k,\s_n) \quad \hbox{for all sufficiently large $n$}.\]

Since $P^{\s_n}_{\bf{x}_n} \stackrel{\rm{weak}^\ast}{\to} \mu$, for every $k$ there is an $N(k)$ such that $\bf{x}_n \in \O(\cal{O}_k,\s_n)$ for all $n\geq N(k)$.  Provided the sequence $k_1 \leq k_2 \leq \dots$ grows sufficiently slowly, it follows that $\bf{x}_n \in \O(\calO_{k_n},\s_n)$ for all sufficiently large $n$, and also that
\[\psi_{k_n}^{\s_n}\big(\O(\calO_{k_n},\s_n)\big)\subseteq \O(\calN_{k_n},\s_n) \quad \hbox{for all sufficiently large $n$}.\]
These together imply that
\[\psi^{\s_n}_{k_n}(\bf{x}_n) \in \O(\calN_{k_n},\s_n) \quad \hbox{for all sufficiently large $n$},\]
and so their empirical distributions converge to $\nu$, since $\calN_1 \supseteq \calN_2 \supseteq \dots$ is a base for the w$^\ast$-topology at $\nu$.
\end{proof}

\subsection{Connected model spaces}\label{subs:connected}

Let $(Y,d_Y)$ be a metric space, let $x,y \in Y$, and let $\delta > 0$.  A \textbf{$\delta$-path from $x$ to $y$} is a finite sequence
\[x = x_0,x_1,\ldots,x_\ell = y\]
in $Y$ such that $d_Y(x_i,x_{i+1}) < \delta$ for every $i \in \{0,\dots,\ell-1\}$.  The integer $\ell$ is the \textbf{length} of this $\delta$-path.  If $A \subseteq Y$ and $\delta > 0$, then $A$ is \textbf{$\delta$-connected} (\textbf{according to $d_Y$}) if for any $x,y \in A$ there is a $\delta$-path from $x$ to $y$ contained in $A$.

We are now ready to define our new property of metric $G$-processes:

\begin{dfn}[Connected model spaces rel $\S$]\label{dfn:connected-model}
Let $(\X^G,\mu,S,d)$ be a metric $G$-process.  It has \textbf{connected model spaces rel $\S$} if the following holds:
\begin{quote}
If $n_1 < n_2 < \dots$, and $\bf{x}_i,\bf{y}_i \in \X^{V_{n_i}}$ are two sequences satisfying
\[P^{\s_{n_i}}_{\bf{x}_i},P^{\s_{n_i}}_{\bf{y}_i} \stackrel{\rm{weak}^\ast}{\to} \mu,\]
then there are a sequence $\delta_i \downarrow 0$ and a sequence of $\delta_i$-paths \[\{\bf{x}_i = \bf{x}_{i,0},\ \bf{x}_{i,1},\ \dots,\ \bf{x}_{i,\ell_i} = \bf{y}_i\} \subseteq \X^{V_{n_i}}\]
(according to the metrics $d^{(V_{n_i})}$) such that for any w$^\ast$-neighbourhood $\calO$ of $\mu$ we have
\begin{equation}\label{eq:path-in-nhood}
\{\bf{x}_{i,j}:\ 0\leq j \leq \ell_i\} \subseteq \O(\calO,\s_{n_i})
\end{equation}
for all sufficiently large $i$.
\end{quote}
\end{dfn}

This definition is made more complicated by the allowance of an arbitrary subsequence $n_1 < n_2 < \dots$.  This is because of cases in which there are some other subsequence $n_1' < n_2' < \dots$ and a w$^\ast$-neighbourhood $\calO$ such that $\O(\calO,\s_{n'_i}) = \emptyset$ for all $i$.  Such cases should still be called `connected' if any two sufficiently good models, for a sufficiently large value of $n$, can be joined by a $\delta$-path consisting of fairly good models.  This requirement simply ignores any values $n'$ for which $\s_{n'}$ admits no good models at all.  However, if there is a subsequence $n_1' < n'_2 < \dots$ as above, then there can be no sequences $\bf{x}_n,\bf{y}_n \in \X^{V_n}$ defined for \emph{all} integers $n$ which satisfy $P^{\s_n}_{\bf{x}_n},P^{\s_n}_{\bf{y}_n}\stackrel{\rm{weak}^\ast}{\to} \mu$, and we must pass to a subsequence which eventually avoids the $n'_i$s.  This will be made clearer by the proof of Proposition~\ref{prop:reform} below.

Definition~\ref{dfn:connected-model} can be re-written more directly in terms of connectedness properties of the model spaces $\O(\calO,\s_n)$.  At first sight, it seems similar to requiring that these spaces are $\delta$-connected for all sufficiently large $n$, but that impression is not quite correct.  In order to fix it, we need a slightly more complicated notion.

If $(Y,d_Y)$ is any metric space and $A\subseteq B \subseteq Y$ is a nested pair of subsets, then the pair $(A,B)$ is \textbf{relatively $\delta$-connected} (\textbf{according to $d_Y$}) if for any $x,y \in A$ there is a $\delta$-path from $x$ to $y$ contained in $B$.

\begin{prop}\label{prop:reform}
If $(\X^G,\mu,S,d)$ is a metric $G$-process, then the following are equivalent.
\begin{enumerate}
 \item It has connected model spaces rel $\S$.
\item For every $\delta > 0$ and every w$^\ast$-neighbourhood $\calO$ of $\mu$, there is a w$^\ast$-neighbourhood $\calO' \subseteq \calO$ such that the pair
\[\big(\O(\calO',\s_n),\O(\calO,\s_n)\big)\]
is relatively $\delta$-connected according to $d^{(V_n)}$ for all sufficiently large $n$.
\end{enumerate}
These properties are both implied by the following.
\begin{enumerate}
 \item[3.] For every $\delta > 0$, $\mu$ has a base of w$^\ast$-neighbourhoods $\calN$ with the property that the set $\O(\calN,\s_n)$ is $\delta$-connected for all sufficiently large $n$.
\end{enumerate}
\end{prop}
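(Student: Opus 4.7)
The plan is to handle the three implications separately: (2) $\Rightarrow$ (1) by a diagonal selection along a countable base at $\mu$, (1) $\Rightarrow$ (2) by contraposition (producing a bad sequence when (2) fails), and (3) $\Rightarrow$ (2) by direct inclusion. Throughout I fix a decreasing base $\calO_1 \supseteq \calO_2 \supseteq \dots$ of w$^\ast$-neighbourhoods of $\mu$, which exists because $\Pr(\X^G)$ is metrizable.

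For (2) $\Rightarrow$ (1), apply hypothesis (2) with $\delta = 1/k$ and $\calO = \calO_k$ to obtain a w$^\ast$-neighbourhood $\calO'_k \subseteq \calO_k$ and an integer $N_k$ such that the pair $\big(\O(\calO'_k,\s_n),\O(\calO_k,\s_n)\big)$ is relatively $(1/k)$-connected for every $n \geq N_k$. After replacing each $\calO'_k$ by $\calO'_1 \cap \dots \cap \calO'_k$ we may assume $\calO'_1 \supseteq \calO'_2 \supseteq \dots$ as well. Now, given sequences $\bf{x}_i,\bf{y}_i \in \X^{V_{n_i}}$ with empirical distributions converging to $\mu$, for each $i$ let $k_i$ be the largest integer $k$ with $\bf{x}_i,\bf{y}_i \in \O(\calO'_k,\s_{n_i})$ and $n_i \geq N_k$. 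Since the empirical distributions converge and the $\calO'_k$ form a base, $k_i \to \infty$. By construction there is a $(1/k_i)$-path from $\bf{x}_i$ to $\bf{y}_i$ contained in $\O(\calO_{k_i},\s_{n_i})$; take $\delta_i := 1/k_i$. For any w$^\ast$-neighbourhood $\calO$ we have $\calO_{k_i} \subseteq \calO$ eventually, so~\eqref{eq:path-in-nhood} holds.

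For (1) $\Rightarrow$ (2), suppose (2) fails: there exist $\delta > 0$ and a w$^\ast$-neighbourhood $\calO$ of $\mu$ such that for every $k$ we can find $n_k$ and models $\bf{x}_k,\bf{y}_k \in \O(\calO_k,\s_{n_k})$ which cannot be joined by a $\delta$-path inside $\O(\calO,\s_{n_k})$. After passing to a subsequence we may take $n_1 < n_2 < \dots$, and then $P^{\s_{n_k}}_{\bf{x}_k},P^{\s_{n_k}}_{\bf{y}_k} \to \mu$ weakly$^\ast$. Applying (1) to these two sequences yields $\delta_k \downarrow 0$ and $\delta_k$-paths joining $\bf{x}_k$ to $\bf{y}_k$; specializing~\eqref{eq:path-in-nhood} to $\calO$ itself, these paths lie inside $\O(\calO,\s_{n_k})$ once $k$ is large, and also $\delta_k < \delta$ for large $k$, contradicting the defining property of the $(\bf{x}_k,\bf{y}_k)$.

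Finally, (3) $\Rightarrow$ (2) is essentially immediate: given $\delta$ and $\calO$, choose $\calN$ from the base provided by (3) with $\calN \subseteq \calO$ and $\O(\calN,\s_n)$ itself $\delta$-connected for all large $n$. Then every $\delta$-path we find in $\O(\calN,\s_n) \subseteq \O(\calO,\s_n)$ witnesses relative $\delta$-connectedness of the pair $(\O(\calN,\s_n),\O(\calO,\s_n))$, so $\calO' := \calN$ works.

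The only delicate point is the diagonal step in (2) $\Rightarrow$ (1): one must ensure that the constructed $k_i$ tends to infinity, which is exactly where the choice of an arbitrary subsequence $n_i$ in Definition~\ref{dfn:connected-model} plays its role and motivates the subsequent remark in the paper. Once this diagonal selection is set up correctly the rest is bookkeeping.
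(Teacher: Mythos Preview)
Your proof is correct and follows essentially the same route as the paper: (1)$\Rightarrow$(2) by contraposition via a countable base, (2)$\Rightarrow$(1) by a diagonal selection along that base, and (3)$\Rightarrow$(2) by the trivial inclusion. The only cosmetic differences are in indexing (your $k_i$ versus the paper's $\max\{j:\ i\geq i_j\}$) and that you make the contradiction in (1)$\Rightarrow$(2) slightly more explicit; neither proof bothers to spell out the trivial paths needed for the finitely many initial $i$ before the diagonal kicks in.
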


\begin{proof}
(1. $\Longrightarrow$ 2.) \quad If property 2 does not hold, then there are some $\delta$ and $\calO$ for which it fails.  Let $\calO_1 \supseteq \calO_2\supseteq \cdots$ be a base for the w$^\ast$-topology at $\mu$ such that $\calO_i \subseteq \calO$ for every $i$.  By the failure of property 2, there are integers $n_1 < n_2 < \dots$ and pairs $\{\bf{x}_i,\bf{y}_i\} \subseteq \O(\calO_i,\s_{n_i})$ for every $i$ which cannot be connected by $\delta$-paths that stay inside $\O(\calO,\s_{n_i})$.  This prevents the existence of $\delta$-paths from $\bf{x}_i$ to $\bf{y}_i$ satisfying~(\ref{eq:path-in-nhood}) in Definition~\ref{dfn:connected-model}.

\vspace{7pt}

(2. $\Longrightarrow$ 1.) \quad Let $\bf{x}_i,\bf{y}_i \in \X^{V_{n_i}}$ be a pair of sequences as in Definition~\ref{dfn:connected-model}.  Let $\calO_1 \supseteq \calO_2 \supseteq \dots$ be a base for the w$^\ast$-topology at $\mu$.

For each $j$, property 2 gives a sub-neighbourhood $\calO'_j \subseteq \calO_j$ such that the pair
\begin{equation}\label{eq:conn-pair}
\big(\O(\calO_j',\s_n),\O(\calO_j,\s_n)\big)
\end{equation}
is relatively $2^{-j}$-connected (according to $d^{(V_n)}$) for all sufficiently large $n$.

Since $P^{\s_{n_i}}_{\bf{x}_i},P^{\s_{n_i}}_{\bf{y}_i} \stackrel{\rm{weak}^\ast}{\to} \mu$, there are $i_1 < i_2 < \dots$ such that $\bf{x}_i,\bf{y}_i \in \O(\calO'_j,\s_{n_i})$ for all $i \geq i_j$.  Now the relative $2^{-j}$-connectedness of the pair~(\ref{eq:conn-pair}) implies that for each $i \geq i_j$ there is a $2^{-j}$-path
\[\{\bf{x}_i = \bf{x}_{i,0},\ \bf{x}_{i,1},\ \dots,\ \bf{x}_{i,\ell_i} = \bf{y}_i\} \subseteq \O(\calO_j,\s_{n_i}).\]
These paths verify Definition~\ref{dfn:connected-model} with
\[\delta_i := 2^{-\max\{j:\ i \geq i_j\}}.\]

\vspace{7pt}

(3. $\Longrightarrow$ 2.) \quad For any $\calO$, property 3 gives a sub-neighbourhood $\calN \subseteq \calO$ for which $\O(\calN,\s_n)$ is $\delta$-connected for all sufficiently large $n$, and this implies that the pair
\[\big(\O(\calN,\s_n),\O(\calO,\s_n)\big)\]
is relatively $\delta$-connected.
\end{proof}

In all the examples of connected model spaces that I know, one actually has property 3 above, which is easier to verify.  But I do not see a proof that these are equivalent, and I also do not know whether property 3 is isomorphism-invariant.

\begin{rmk}
Definition~\ref{dfn:connected-model} has a natural modification as follows.  Let us say that $(\X^G,\mu,S,d)$ has \textbf{uniformly connected model spaces rel $\S$} if there is a function $\ell:(0,1)\to \bbN$ for which following holds:
\begin{quote}
If $n_1 < n_2 < \dots$, and $\bf{x}_i,\bf{y}_i \in \X^{V_{n_i}}$ are two sequences satisfying
\[P^{\s_{n_i}}_{\bf{x}_i},P^{\s_{n_i}}_{\bf{y}_i} \stackrel{\rm{weak}^\ast}{\to} \mu,\]
then for every $\delta > 0$ and all sufficiently large $i$ there are $\delta$-paths \[\{\bf{x}_i = \bf{x}_{i,0},\ \bf{x}_{i,1},\ \dots,\ \bf{x}_{i,\ell(\delta)} = \bf{y}_i\} \subseteq \X^{V_{n_i}}\]
such that for any w$^\ast$-neighbourhood $\calO$ of $\mu$ we have
\[\{\bf{x}_{i,j}:\ 0\leq j \leq \ell(\delta)\} \subseteq \O(\calO,\s_{n_i})\]
for all sufficiently large $i$.
\end{quote}

That is, one requires that the lengths of the $\delta$-paths depend only on $\delta$, not on $n$.  It is easily shown that this variant is formally stronger than Definition~\ref{dfn:connected-model}

In the case of our principal examples, Bernoulli shifts, the proof below actually shows that this stronger property holds.  I do not know of any examples that have connected but not uniformly connnected model spaces. \fin
\end{rmk}

\section{Model-surjective factor maps and Theorem A}

As promised in the Introduction, for some groups $G$ the Popa factor does not preserve the connectedness of model spaces.  We now introduce a special kind of factor map which does always preserve this property, and show that all isomorphisms are factor maps of this kind.  They are defined in terms of good models for the associated graphical joinings, but we will prove an equivalent characterization in terms of AL approximations to the factor map.  We finish the subsection by proving Theorem A.

Later we show that complemented factor maps of Bernoulli shifts are also of this kind, which leads to the proof of Corollary D$^{\prime\prime}$.

Let
\[\Phi = \phi^G:(\X^G,\mu,S,d_\X)\to (\Y^G,\nu,S,d_\Y)\]
be a factor map of metric $G$-processes, and let
\[\l = \int_{\X^G} \delta_{(x,\Phi(x))}\,\mu(\d x)\]
be the associated graphical joining of $\mu$ and $\nu$.    Also, let $d$ be the Hamming average of the metrics $d_\X$ and $d_\Y$ on $\X\times \Y$.

\begin{dfn}[Model-surjective factor maps rel $\S$]\label{dfn:model-surj}
This factor map $\Phi$ is \textbf{model-surjective rel $\S$} if, whenever $n_1 < n_2 < \dots$ and $\bf{y}_i \in \Y^{V_{n_i}}$ is a sequence satisfying
\[P^{\s_{n_i}}_{\bf{y}_i} \stackrel{\rm{weak}^\ast}{\to} \nu,\]
there is another sequence $\bf{x}_i \in \X^{V_{n_i}}$ such that
\[P^{\s_{n_i}}_{(\bf{x}_i,\bf{y}_i)} \stackrel{\rm{weak}^\ast}{\to} \l,\]
where this last convergence refers to the w$^\ast$-topology arising from the product topology on $\X^G\times \Y^G$.
\end{dfn}

Clearly the sequence $\bf{x}_i$ obtained above must also satisfy $P^{\s_n}_{\bf{x}_i} \stackrel{\rm{weak}^\ast}{\to} \mu$. The idea behind Definition~\ref{dfn:model-surj} is that, if the joint empirical distribution converges to $\l$, then this forces $\bf{x}_i$ to `resemble' a pre-image of $\bf{y}_i$ under some model-space approximation to the factor map $\Phi$.

The need to consider a sofic sub-approximation $(\s_{n_i})_{i\geq 1}$ of $\S$ is similar to the case of Definition~\ref{dfn:connected-model}.

Beware that at this point, Definition~\ref{dfn:model-surj} requires a particular choice of the metrics $d_\X$ and $d_\Y$.  Once we have shown that isomorphisms are model-surjective (Proposition~\ref{prop:isos-surj} below), it will follow that Definition~\ref{dfn:model-surj} actually depends only on the measure theoretic structure of $\Phi$ as a factor map from $(\X^G,\mu,S)$ to $(\Y^G,\nu,S)$, and moreover that it can be extended unambiguously to factor maps of general $G$-systems (Corollary~\ref{cor:model-surj-iso-invar}).

First we need a simple lemma and corollary which relate the metrics $d^{(V_n)}$ and certain empirical distributions.

\begin{lem}\label{lem:dist-and-emp}
Let $\S$ be as before, and let $(\X,d)$ be a compact metric space.  For any two sequences $\bf{x}_n,\bf{z}_n \in \X^{V_n}$, the following are equivalent:
\begin{enumerate}
\item we have $d^{(V_n)}(\bf{x}_n,\bf{z}_n) \to 0$;
\item in the w$^\ast$ topology, any subsequential limit of the sequence
\[P^{\s_n}_{(\bf{x}_n,\bf{z}_n)} \in \Pr(\X^G\times \X^G)\]
is supported on the diagonal
\[\{(x,x):\ x \in \X^G\}.\]
\end{enumerate}
In case $P^{\s_n}_{\bf{x}_n}$ w$^\ast$-converges to some $\mu \in \Pr(\X^G)$, either of the above conditions implies that $P^{\s_n}_{\bf{z}_n}$ w$^\ast$-converges to the same limit.
\end{lem}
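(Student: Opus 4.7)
The plan is to reduce both conditions to statements about the single continuous, bounded function $D : \X^G \times \X^G \to [0,\infty)$ defined by $D(x,z) := d(x_e, z_e)$, where $e \in G$ is the identity. The key observation to isolate first is the exact identity
\[ d^{(V_n)}(\bf{x}_n,\bf{z}_n) \;=\; \int D \; \d P^{\s_n}_{(\bf{x}_n,\bf{z}_n)}. \]
This is immediate from the definition of $P^{\s_n}_{(\bf{x}_n,\bf{z}_n)}$ as a uniform average of point masses at the pullback names $(\Pi^{\s_n}_v(\bf{x}_n), \Pi^{\s_n}_v(\bf{z}_n))$, together with the fact that $\s_n(e)$ is a bijection of $V_n$, so averaging $d(x_{\s_n^e(v)}, z_{\s_n^e(v)})$ over $v$ is the same as averaging $d(x_v, z_v)$ over $v$.

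With this in hand, condition (1) is equivalent to $\int D \, \d \pi = 0$ for every w$^\ast$-subsequential limit $\pi$ of $P^{\s_n}_{(\bf{x}_n,\bf{z}_n)}$. Because $D \geq 0$ is continuous, this localizes $\pi$ to $\{(x,z) : x_e = z_e\}$. Here I would invoke the standard fact, already implicit in the paper's framework, that such a $\pi$ is automatically $S\times S$-invariant, since $\S$ is a sofic approximation. Shift-invariance then upgrades the condition $x_e = z_e$ $\pi$-a.e.\ to $x_g = z_g$ $\pi$-a.e.\ for every single $g\in G$, and intersecting over the countable group $G$ gives full diagonal support. This yields (1) $\Longrightarrow$ (2). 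For the converse (2) $\Longrightarrow$ (1), I would argue by contraposition: any subsequence on which $d^{(V_n)}(\bf{x}_n,\bf{z}_n)$ is bounded below by some $\eps > 0$ admits, by w$^\ast$-compactness of $\Pr(\X^G \times \X^G)$, a further subsequence along which $P^{\s_n}_{(\bf{x}_n,\bf{z}_n)}$ converges to some $\pi$ satisfying $\int D\,\d\pi \geq \eps > 0$, so $\pi$ cannot be supported on the diagonal.

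For the final assertion, assume $P^{\s_n}_{\bf{x}_n} \to \mu$. Any w$^\ast$-subsequential limit $\nu$ of $P^{\s_n}_{\bf{z}_n}$ arises as the second marginal of some w$^\ast$-subsequential limit $\pi$ of the joint empirical distributions; the first marginal of $\pi$ is necessarily $\mu$. Since (2) places $\pi$ on the diagonal, the two marginals coincide, forcing $\nu = \mu$. As $\Pr(\X^G)$ is compact and metrizable, every subsequential limit being $\mu$ gives $P^{\s_n}_{\bf{z}_n} \to \mu$. The only mildly delicate point is the invocation of shift-invariance for w$^\ast$-limits of empirical distributions over a sofic approximation, but this is standard within the framework the paper inherits from~\cite{Aus--soficentadd}; I do not anticipate any genuine obstacle.
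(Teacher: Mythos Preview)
Your proposal is correct and follows essentially the same route as the paper: both introduce the continuous function $D(x,z)=d(x_e,z_e)$, use the identity $\int D\,\d P^{\s_n}_{(\bf{x}_n,\bf{z}_n)} = d^{(V_n)}(\bf{x}_n,\bf{z}_n)$, invoke shift-invariance of subsequential limits (from~\cite[Lemma 3.2]{Aus--soficentadd}) to pass from $\{x_e=z_e\}$ to the full diagonal, and handle the converse by contraposition plus w$^\ast$-compactness. The final clause about marginals is also argued identically.
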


\begin{proof}
On $\X^G\times \X^G$, consider the function
\[F(x,x') := d(x_e,x'_e) \quad \hbox{for}\ x = (x_g)_g,\ x' = (x'_g)_g \in \X^G.\]
It is continuous, and a simple calculation gives
\[\int F\,\d P^{\s_n}_{(\bf{x}_n,\bf{z}_n)} = d^{(V_n)}(\bf{x}_n,\bf{y}_n).\]

Assuming condition (1), it follows that any subsequential limit
\[\l = \lim_{j\to\infty} P^{\s_{n_j}}_{(\bf{x}_{n_j},\bf{z}_{n_j})}\]
must satisfy $\int F\,\d\l = 0$, hence be supported on the set $\{(x,x'):\ x_e = x'_e\}$.  Since $\l$ is $S$-invariant (see~\cite[Lemma 3.2]{Aus--soficentadd}), it must be supported on the diagonal.

On the other hand, if (1) fails, then there are some $\delta > 0$ and some subsequence $n_1 < n_2 < \dots$ for which
\[d^{(V_{n_j})}(\bf{x}_{n_j},\bf{z}_{n_j})\to \delta \quad \hbox{as}\ j\to\infty.\]
By the sequential compactness of the w$^\ast$-topology on $\Pr(\X^G\times \X^G)$, there is a further subsequence for which the empirical distributions converge to some $\l$.  This $\l$ must then satisfy $\int F\,\d\l = \delta > 0$, and so cannot be supported on the diagonal.

Finally, any measure supported on the diagonal must have equal first and second marginals, so condition (2) clearly implies the last part of the lemma.
\end{proof}

\begin{cor}\label{cor:dist-and-emp}
Let
\[\Phi = \phi^G:(\X^G,\mu,S,d_\X) \to (\Y^G,\nu,S,d_\Y)\]
be a factor map of metric $G$-processes, and let
\[\l = \int_{\X^G}\delta_{(x,\Phi(x))}\,\mu(\d x)\]
be the resulting graphical joining of these two systems.  Suppose that $\bf{x}_n \in \X^{V_n}$ and $\bf{y}_n$, $\bf{w}_n \in \Y^{V_n}$ are sequence such that
\[P^{\s_n}_{(\bf{x}_n,\bf{y}_n)},P^{\s_n}_{(\bf{x}_n,\bf{w}_n)} \stackrel{\rm{weak}^\ast}{\to} \l.\]
Then $d_\Y^{(V_n)}(\bf{y}_n,\bf{w}_n)\to 0$.
\end{cor}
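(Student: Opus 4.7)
The plan is to lift the problem to the space $\X^G \times \Y^G \times \Y^G$, extract a subsequential limit of the joint empirical distributions $P^{\s_n}_{(\bf{x}_n,\bf{y}_n,\bf{w}_n)}$, show that such a limit must be concentrated on $\{(x,y,w) : y = w\}$, and then invoke Lemma~\ref{lem:dist-and-emp} applied to the $\Y$-valued pair $(\bf{y}_n,\bf{w}_n)$.

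First, I would argue by subsequences: it suffices to show that every subsequence has a further subsequence along which $d_\Y^{(V_n)}(\bf{y}_n,\bf{w}_n) \to 0$. By sequential compactness of the w$^\ast$-topology on $\Pr(\X^G \times \Y^G \times \Y^G)$ (which follows since the underlying spaces are compact metrizable), pass to a further subsequence along which the triple empirical distributions $P^{\s_n}_{(\bf{x}_n,\bf{y}_n,\bf{w}_n)}$ converge to some measure $\l'$.

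Next I would identify the marginals of $\l'$. The pushforward under the coordinate projection $(x,y,w)\mapsto(x,y)$ of $P^{\s_n}_{(\bf{x}_n,\bf{y}_n,\bf{w}_n)}$ is exactly $P^{\s_n}_{(\bf{x}_n,\bf{y}_n)}$, which converges to $\l$ by hypothesis, and similarly for the $(x,w)$-marginal. Hence both the $(x,y)$-marginal and the $(x,w)$-marginal of $\l'$ equal the graphical joining $\l$. Since $\l$ is concentrated on the graph $\{(x,y) : y = \Phi(x)\}$, under $\l'$ one has $y = \Phi(x)$ almost surely and also $w = \Phi(x)$ almost surely, whence $y = w$ almost surely. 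Thus $\l'$ is supported on $\{(x,y,w) : y = w\}$, and in particular its projection to $\Y^G\times \Y^G$ is supported on the diagonal.

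Finally, since every subsequential limit of $P^{\s_n}_{(\bf{y}_n,\bf{w}_n)}$ arises as such a projection of some subsequential limit of $P^{\s_n}_{(\bf{x}_n,\bf{y}_n,\bf{w}_n)}$, every subsequential limit of $P^{\s_n}_{(\bf{y}_n,\bf{w}_n)}$ is diagonal-supported. The implication (2)$\Longrightarrow$(1) of Lemma~\ref{lem:dist-and-emp}, applied to the sequences $\bf{y}_n,\bf{w}_n \in \Y^{V_n}$ with metric $d_\Y$, then yields $d_\Y^{(V_n)}(\bf{y}_n,\bf{w}_n) \to 0$ along the chosen subsequence, completing the proof. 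There is no real obstacle here: the only point requiring a moment of care is the identification of the marginals of $\l'$, which is immediate from the functoriality of empirical distributions under coordinate projections.
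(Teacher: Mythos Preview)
Your proposal is correct and follows essentially the same route as the paper: both lift to the triple empirical distributions $P^{\s_n}_{(\bf{x}_n,\bf{y}_n,\bf{w}_n)}$, take an arbitrary w$^\ast$-subsequential limit, use the hypothesis to identify its two $(\X\times\Y)$-marginals with the graphical joining $\l$, conclude that the limit is supported on $\{y=w\}$, and finish with Lemma~\ref{lem:dist-and-emp}. Your version spells out the subsequence-of-subsequence reduction and the functoriality of empirical distributions under projections a bit more explicitly, but there is no substantive difference.
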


\begin{proof}
For each $n$, let
\[\theta_n := P^{\s_n}_{(\bf{x}_n,\bf{y}_n,\bf{w}_n)} \in \Pr((\X\times \Y \times \Y)^G),\]
and let
\[\theta := \lim_{j\to\infty} \theta_{n_j}\]
be any w$^\ast$-subsequential limit of these measures.

The projections of $\theta_n$ onto the two copies of $(\X\times \Y)^G$ are $P^{\s_n}_{(\bf{x}_n,\bf{y}_n)}$ and $P^{\s_n}_{(\bf{x}_n,\bf{w}_n)}$, so we know that the limiting measure $\theta$ has both of these projections equal to $\l$.  Hence $\theta$ is supported on
\[\{(x,y,y'):\ x \in \X^G,\ y = \phi^G(x) = y'\}.\]

Therefore the limit of $P^{\s_n}_{(\bf{y}_{n_j},\bf{w}_{n_j})}$ is supported on the diagonal in $(\Y\times \Y)^G$, and Lemma~\ref{lem:dist-and-emp} completes the proof.
\end{proof}

Now we can begin the study of model-surjectivity.  Graphical joinings give the easiest way to define this property, but it is useful to have a more `functional' characterization.  This can be given in terms of AL approximations to the factor map.

\begin{lem}\label{lem:model-surj-equiv}
For $\Phi = \phi^G$ as above, the following are equivalent.
\begin{enumerate}
\item The factor map $\Phi$ is model-surjective rel $\S$.
\item Suppose that $\psi_k \aL \phi$ rel $(\mu,d_\X,d_\Y)$. If $n_1 < n_2 < \dots$ and $\bf{y}_i \in \Y^{V_{n_i}}$ is a sequence satisfying
\[P^{\s_{n_i}}_{\bf{y}_i} \stackrel{\rm{weak}^\ast}{\to} \nu,\]
then there is a sequence $\bf{x}_i \in \X^{V_{n_i}}$ such that we have
\[P^{\s_{n_i}}_{\bf{x}_i} \stackrel{\rm{weak}^\ast}{\to} \mu\]
and
\[d_\Y^{(V_{n_i})}\big(\bf{y}_i,\psi^{\s_{n_i}}_{k_i}(\bf{x}_i)\big) \to 0\]
whenever $k_1 \leq k_2 \leq \dots $ grows sufficiently slowly.
\end{enumerate}
\end{lem}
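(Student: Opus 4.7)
The plan is to reformulate both conditions through the ``graph-enhanced'' map $\tilde{\phi}\colon \X^G \to \X\times\Y$ given by $\tilde{\phi}(x) := (x_e,\phi(x))$, together with its approximations $\tilde{\psi}_k(x) := (x_e,\psi_k(x))$; here $\X\times\Y$ is equipped with the Hamming average $d$ of $d_\X$ and $d_\Y$. The induced factor map $\tilde{\phi}^G\colon(\X^G,\mu,S) \to ((\X\times\Y)^G,\l,S)$ pushes $\mu$ forward to $\l$, and $\tilde{\psi}_k$ inherits from $\psi_k$ the AL approximation property relative to $(\mu, d_\X, d)$, after benign adjustments to the parameters. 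The crucial bookkeeping is that, under the obvious identification,
\[\tilde{\psi}_k^{\s_n}(\bf{x}) \;=\; (\bf{x},\,\psi_k^{\s_n}(\bf{x})).\]
Applying Corollary~\ref{cor:approx-by-Lip-maps} to $\tilde{\psi}_k\aL \tilde{\phi}$ then yields what I shall call the \emph{key fact}: whenever $P^{\s_{n_i}}_{\bf{x}_i} \stackrel{\rm{weak}^\ast}{\to} \mu$ and $k_i$ grows slowly enough,
\[P^{\s_{n_i}}_{(\bf{x}_i,\,\psi^{\s_{n_i}}_{k_i}(\bf{x}_i))} \stackrel{\rm{weak}^\ast}{\to} \l.\]

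For the direction $1\Rightarrow 2$, model-surjectivity applied to $(\bf{y}_i)$ provides $(\bf{x}_i)$ with $P^{\s_{n_i}}_{(\bf{x}_i,\bf{y}_i)}\stackrel{\rm{weak}^\ast}{\to} \l$, and in particular $P^{\s_{n_i}}_{\bf{x}_i}\stackrel{\rm{weak}^\ast}{\to} \mu$. For slowly growing $k_i$ the key fact also yields $P^{\s_{n_i}}_{(\bf{x}_i,\,\psi^{\s_{n_i}}_{k_i}(\bf{x}_i))}\stackrel{\rm{weak}^\ast}{\to} \l$, and Corollary~\ref{cor:dist-and-emp} applied to $\bf{y}_i$ and $\bf{w}_i := \psi^{\s_{n_i}}_{k_i}(\bf{x}_i)$ concludes $d_\Y^{(V_{n_i})}(\bf{y}_i,\bf{w}_i)\to 0$. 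For the direction $2\Rightarrow 1$, fix any AL approximating sequence $\psi_k\aL \phi$ (which exists by~\cite[Lemma 4.3]{Aus--soficentadd}). Property 2 supplies $(\bf{x}_i)$ with $P^{\s_{n_i}}_{\bf{x}_i}\stackrel{\rm{weak}^\ast}{\to} \mu$ and $d_\Y^{(V_{n_i})}(\bf{y}_i,\psi^{\s_{n_i}}_{k_i}(\bf{x}_i))\to 0$ for suitably slow $k_i$; slowing $k_i$ further if needed, the key fact also gives $P^{\s_{n_i}}_{(\bf{x}_i,\,\psi^{\s_{n_i}}_{k_i}(\bf{x}_i))}\stackrel{\rm{weak}^\ast}{\to} \l$. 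Any w$^\ast$-subsequential limit of the triple empirical $P^{\s_{n_i}}_{(\bf{x}_i,\,\bf{y}_i,\,\psi^{\s_{n_i}}_{k_i}(\bf{x}_i))} \in \Pr((\X\times\Y\times\Y)^G)$ projects onto $(\Y\times\Y)^G$ to a measure supported on the diagonal (by Lemma~\ref{lem:dist-and-emp} applied to $\bf{y}_i$ and $\bf{w}_i$), so its $(\X\times\Y)^G$-projections onto the first-and-second and first-and-third coordinates must coincide; the latter converges to $\l$, so does the former, yielding $P^{\s_{n_i}}_{(\bf{x}_i,\bf{y}_i)}\stackrel{\rm{weak}^\ast}{\to} \l$.

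The only step requiring attention is the parameter bookkeeping behind $\tilde{\psi}_k\aL\tilde{\phi}$: the $L^1$-closeness of $\tilde{\psi}_k$ to $\tilde{\phi}$ is half that of $\psi_k$ to $\phi$, the locality domain gains at most the identity element, and the almost-Lipschitz constant acquires a factor depending on $|D_k|$. These are routine verifications, and once settled the rest of the argument assembles cleanly from the machinery of Subsection~\ref{subs:AL}.
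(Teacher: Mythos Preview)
Your proof is correct and follows essentially the same approach as the paper: both introduce the graph-enhanced map $(\xi,\phi)$ (your $\tilde{\phi}$), establish that $(\xi,\psi_k)\aL(\xi,\phi)$, derive the key fact via Corollary~\ref{cor:approx-by-Lip-maps}, and then finish both directions using Lemma~\ref{lem:dist-and-emp} and Corollary~\ref{cor:dist-and-emp}. The only cosmetic difference is that for $2\Rightarrow 1$ the paper skips your triple-empirical argument by applying the last clause of Lemma~\ref{lem:dist-and-emp} directly in the product space $(\X\times\Y)^G$, noting that $d^{(V_n)}\big((\bf{x}_n,\bf{y}_n),(\bf{x}_n,\psi_{k_n}^{\s_n}(\bf{x}_n))\big)=\tfrac{1}{2}\,d_\Y^{(V_n)}(\bf{y}_n,\psi_{k_n}^{\s_n}(\bf{x}_n))\to 0$.
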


\begin{proof}
Let $\psi_k \aL \phi$ be an AL approximating sequence rel $(\mu,d_\X,d_\Y)$.  Also, let $\xi:\X^G\to \X$ be the projection onto the $\{e\}$-coordinate, so $\xi^G = \rm{id}_{\X^G}$.  An easy check (or see~\cite[Corollary 4.7]{Aus--soficentadd}) gives that
\begin{equation}\label{eq:pair-AL-conv}
(\xi,\psi_k) \aL (\xi,\phi) \quad \hbox{rel}\ (\mu,d_\X,d),
\end{equation}
where $d$ is the Hamming average of $d_\X$ and $d_\Y$ on $\X\times \Y$, and $(\xi,\phi)$ denotes the map
\[\X^G\to \X\times \Y:x\mapsto (\xi(x),\phi(x)).\]

Let $n_1 < n_2 < \dots$, and let $\bf{y}_i \in \Y^{V_{n_i}}$ be a sequence whose empirical distributions tend to $\nu$.  By re-labeling the sofic sub-approximation $(\s_{n_i})_{i\geq 1}$ if necessary, we may assume that $n_i = i$ for all $i$, and hence write $\bf{y}_n$ as an element of $\Y^{V_n}$.

Now suppose $\bf{x}_n \in \X^{V_n}$ is another sequence whose empirical distributions tend to $\mu$.  By~(\ref{eq:pair-AL-conv}) and Corollary~\ref{cor:approx-by-Lip-maps}, it follows that
\[P^{\s_n}_{(\bf{x}_n,\psi_{k_n}^{\s_n}(\bf{x}_n))} = P^{\s_n}_{(\xi,\psi_{k_n})^{\s_n}(\bf{x}_n)} \stackrel{\rm{weak}^\ast}{\to} \l,\]
provided the sequence $k_1 \leq k_2 \leq \dots$ grows sufficiently slowly.

The result follows because Lemma~\ref{lem:dist-and-emp} (respectively Corollary~\ref{cor:dist-and-emp}) shows that
\[P^{\s_n}_{(\bf{x}_n,\bf{y}_n)} \stackrel{\rm{weak}^\ast}{\to} \l\]
if (respectively only if)
\[d^{(V_n)}\big((\bf{x}_n,\bf{y}_n),(\bf{x}_n,\psi_{k_n}^{\s_n}(\bf{x}_n))\big)= \frac{1}{2} d_\Y^{(V_n)}\big(\bf{y}_n,\psi_{k_n}^{\s_n}(\bf{x}_n)\big) \to 0.\]
\end{proof}

\begin{prop}\label{prop:compose-still-surj}
If
\begin{multline*}
\Phi:(\X^G,\mu,S,d_\X)\to (\Y^G,\nu,S,d_\Y) \\ \hbox{and} \quad \t{\Phi}:(\Y^G,\nu,S,d_\Y) \to (\Z^G,\theta,S,d_\Z)
\end{multline*}
are both model-surjective rel $\S$, then so is their composition.
\end{prop}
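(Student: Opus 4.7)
The plan is to construct, from a good model sequence $\bf{z}_i$ for $\theta$, an intermediate sequence $\bf{y}_i$ for $\nu$ using model-surjectivity of $\t\Phi$, and then a sequence $\bf{x}_i$ for $\mu$ using model-surjectivity of $\Phi$. It then remains to verify that the joint empirical distribution of $(\bf{x}_i,\bf{z}_i)$ converges to the graphical joining $\l' = \int \delta_{(x,\t\Phi(\Phi(x)))}\,\mu(\d x)$ associated with $\t\Phi\circ\Phi$. I expect the only nontrivial step to be this last verification, and I would handle it by passing through the triple empirical distribution $P^{\s_{n_i}}_{(\bf{x}_i,\bf{y}_i,\bf{z}_i)}$ and letting the two given pairwise marginals force a unique limit.

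In detail: given a subsequence $n_1 < n_2 < \dots$ and $\bf{z}_i \in \Z^{V_{n_i}}$ with $P^{\s_{n_i}}_{\bf{z}_i} \stackrel{\rm{weak}^\ast}{\to} \theta$, I first invoke model-surjectivity of $\t\Phi$ to obtain $\bf{y}_i \in \Y^{V_{n_i}}$ with $P^{\s_{n_i}}_{(\bf{y}_i,\bf{z}_i)}\stackrel{\rm{weak}^\ast}{\to} \t\l$, where $\t\l$ denotes the graphical joining of $\nu$ and $\theta$ associated with $\t\Phi$. Projecting onto $\Y^G$ gives $P^{\s_{n_i}}_{\bf{y}_i} \stackrel{\rm{weak}^\ast}{\to} \nu$, so I may then apply model-surjectivity of $\Phi$ to obtain $\bf{x}_i \in \X^{V_{n_i}}$ with $P^{\s_{n_i}}_{(\bf{x}_i,\bf{y}_i)}\stackrel{\rm{weak}^\ast}{\to} \l$, where $\l$ is the graphical joining of $\mu$ and $\nu$ associated with $\Phi$.

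For the reassembly step, consider $P^{\s_{n_i}}_{(\bf{x}_i,\bf{y}_i,\bf{z}_i)} \in \Pr((\X\times\Y\times\Z)^G)$. By sequential compactness of the weak$^\ast$ topology, any subsequence has a further subsequential limit $\tau$. The projection of $\tau$ onto $(\X\times\Y)^G$ must equal $\l$, which is concentrated on $\{(x,y):\ y = \Phi(x)\}$; the projection onto $(\Y\times\Z)^G$ must equal $\t\l$, concentrated on $\{(y,z):\ z = \t\Phi(y)\}$. Hence $\tau$ is concentrated on $\{(x,y,z):\ y = \Phi(x),\ z = \t\Phi(y)\}$, which forces $\tau = \int \delta_{(x,\Phi(x),\t\Phi(\Phi(x)))}\,\mu(\d x)$, uniquely determined by these pairwise marginals. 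Since every weak$^\ast$ subsequential limit equals this same $\tau$, the full sequence $P^{\s_{n_i}}_{(\bf{x}_i,\bf{y}_i,\bf{z}_i)}$ weak$^\ast$-converges to $\tau$, and projecting onto $(\X\times\Z)^G$ gives $P^{\s_{n_i}}_{(\bf{x}_i,\bf{z}_i)}\stackrel{\rm{weak}^\ast}{\to} \l'$, exactly verifying Definition~\ref{dfn:model-surj} for $\t\Phi\circ\Phi$. The only place any real work occurs is the observation that the two given pairwise marginals of $\tau$, being graphical joinings, are each supported on a graph, so together they pin down the triple measure on the obvious graph in the product.
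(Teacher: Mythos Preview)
Your argument is correct and essentially identical to the paper's: both lift $\bf{z}_i$ first to $\bf{y}_i$ via model-surjectivity of $\t\Phi$, then to $\bf{x}_i$ via model-surjectivity of $\Phi$, and then argue that any subsequential limit of the triple empirical distribution $P^{\s_{n_i}}_{(\bf{x}_i,\bf{y}_i,\bf{z}_i)}$ is forced by its two graphical pairwise marginals to be the full graphical joining of $(\Phi,\t\Phi\circ\Phi)$. Your write-up is slightly more explicit about why uniqueness of the subsequential limit gives convergence of the whole sequence, but otherwise the proofs coincide.
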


\begin{proof}
As in the proof above, after passing to a sofic sub-approximation we may assume that $\bf{z}_n \in \Z^{V_n}$ is a sequence whose empirical distributions tend to $\theta$.  By the two assumed instances of model-surjectivity rel $\S$, we may find first a sequence $\bf{y}_n \in \Y^{V_n}$ and then a sequence $\bf{x}_n \in \X^{V_n}$ such that
\[P^{\s_n}_{(\bf{x}_n,\bf{y}_n)} \stackrel{\rm{weak}^\ast}{\to} \l \quad \hbox{and}\quad P^{\s_n}_{(\bf{y}_n,\bf{z}_n)} \stackrel{\rm{weak}^\ast}{\to} \t{\l},\]
where $\l$ and $\t{\l}$ are the graphical joinings associated to $\Phi$ and $\t{\Phi}$ respectively.

Now let
\[\l_1 = \lim_{j\to\infty} P^{\s_{n_j}}_{(\bf{x}_{n_j},\bf{y}_{n_j},\bf{z}_{n_j})}\]
be any subsequential limit of the triple empirical distributions.  Since its projection onto the first two coordinates must be $\l$ and its projection onto the second two coordinates must be $\t{\l}$, it is supported on the set
\[\big\{\big(x,\Phi(x),\t{\Phi}(\Phi(x))\big):\ x \in \X^G\big\},\]
and hence it must be the full graphical joining associated to $(\Phi,\t{\Phi}\circ \Phi)$.  Therefore any subsequence of
\[P^{\s_n}_{(\bf{x}_n,\bf{z}_n)}\]
converges to the graphical joining associated to $\t{\Phi}\circ \Phi$, as required.
\end{proof}

\begin{prop}\label{prop:isos-surj}
If $\Phi = \phi^G$ is an isomorphism, then it is model-surjective for any sofic approximation $\S$.
\end{prop}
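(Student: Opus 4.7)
The strategy is to build the lift $\bf{x}_i$ of $\bf{y}_i$ by applying almost-Lipschitz approximations to the \emph{inverse} factor map, and then invoke Corollary~\ref{cor:approx-by-Lip-maps} to read off the joint empirical distribution.

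First I would express $\Phi^{-1}$ in coordinate-map form. Since $\Phi$ is a measure-theoretic isomorphism of $G$-systems, there is an $S$-equivariant measurable inverse $\Psi:\Y^G\to\X^G$, and a standard equivariance argument (composing with the $e$-coordinate projection) shows that $\Psi = \psi^G$ for some measurable $\psi:\Y^G\to\X$.  Then choose an almost-Lipschitz approximating sequence $\psi_k \aL \psi$ rel $(\nu,d_\Y,d_\X)$, which exists by~\cite[Lemma 4.3]{Aus--soficentadd}.

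Next let $\xi:\Y^G\to\Y$ be the projection onto the $\{e\}$-coordinate, so $\xi^G = \rm{id}_{\Y^G}$, and note that the pair map satisfies $(\psi_k,\xi)\aL (\psi,\xi)$ rel $(\nu,d_\Y,d)$ where $d$ is the Hamming average of $d_\X$ and $d_\Y$ on $\X\times\Y$ (cf.~\cite[Corollary 4.7]{Aus--soficentadd}).  Given any sequence $\bf{y}_i\in\Y^{V_{n_i}}$ with $P^{\s_{n_i}}_{\bf{y}_i}\stackrel{\rm{weak}^\ast}{\to}\nu$, define
\[\bf{x}_i \ :=\ \psi_{k_i}^{\s_{n_i}}(\bf{y}_i)\]
for $k_1\leq k_2\leq \dots$ growing sufficiently slowly.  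Since $\xi^{\s_{n_i}}(\bf{y}_i)=\bf{y}_i$, we have $(\psi_{k_i},\xi)^{\s_{n_i}}(\bf{y}_i) = (\bf{x}_i,\bf{y}_i)$, and Corollary~\ref{cor:approx-by-Lip-maps} applied to the pair map $(\psi_k,\xi)$ yields
\[P^{\s_{n_i}}_{(\bf{x}_i,\bf{y}_i)}\ \stackrel{\rm{weak}^\ast}{\to}\ \int_{\Y^G}\delta_{(\psi^G(y),y)}\,\nu(\d y).\]
The final step is to identify this limit with $\l$: changing variables via $y = \Phi(x)$ (and using $\psi^G = \Phi^{-1}$) gives
\[\int_{\Y^G}\delta_{(\psi^G(y),y)}\,\nu(\d y) \ =\ \int_{\X^G}\delta_{(x,\Phi(x))}\,\mu(\d x)\ =\ \l,\]
which verifies Definition~\ref{dfn:model-surj}.

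The argument has essentially no obstacle beyond careful bookkeeping: all the heavy lifting is already packaged into Corollary~\ref{cor:approx-by-Lip-maps}.  The only delicate point worth flagging is that the existence of an $S$-equivariant measurable inverse is only guaranteed modulo $\nu$-null sets, but this is harmless because the AL approximation property and the pushforward in Corollary~\ref{cor:approx-by-Lip-maps} both depend on $\psi$ only through its $\nu$-a.e.\ equivalence class.
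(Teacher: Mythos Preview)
Your proof is correct and follows essentially the same route as the paper: both apply an AL approximating sequence to the inverse map $\Phi^{-1} = \psi^G$, pair it with the identity-coordinate projection $\xi$, and then use the convergence machinery (you cite Corollary~\ref{cor:approx-by-Lip-maps}, the paper cites the underlying Lemma~\ref{lem:approx-by-Lip-maps}) to obtain $P^{\s_{n_i}}_{(\bf{x}_i,\bf{y}_i)}\to\l$ with $\bf{x}_i = \psi_{k_i}^{\s_{n_i}}(\bf{y}_i)$. Your version is slightly more explicit about writing $\Phi^{-1}$ in coordinate form and identifying the limit measure via change of variables, but the argument is the same.
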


\begin{proof}
Let $\t{\phi}$ be such that $\Phi^{-1} = \t{\phi}^G$, let $\xi:\Y^G\to \Y$ be the projection to the $\{e\}$-indexed coordinate, and let $\t{\psi}_m \aL \t{\phi}$ rel $(\nu,d_\Y,d_\X)$.  Let $\l$ be the graphical joining of $\Phi$. Then we also have
\begin{equation}\label{eq:combine}
(\t{\psi}_m,\xi) \aL (\t{\phi},\xi) \quad \hbox{rel}\ (\nu,d_\Y,d)
\end{equation}
(see again~\cite[Corollary 4.7]{Aus--soficentadd} for a careful proof of this).

As in the proofs above, to show the model-surjectivity of $\Phi$ we may pass to a sofic sub-approximation, and so suppose that $\bf{y}_n \in \Pr(\Y^{V_n})$ is a sequence whose empirical distributions converge to $\nu$.  Then~(\ref{eq:combine}) and Lemma~\ref{lem:approx-by-Lip-maps} give that
\[P^{\s_n}_{\left(\t{\psi}_{m_n}^{\s_n}(\bf{y}_n),\bf{y}_n\right)}\stackrel{\rm{weak}^\ast}{\to} \l\]
provided $(m_n)_{n\geq 1}$ grows sufficiently slowly.

Letting $\bf{x}_n := \t{\psi}_{m_n}^{\s_n}(\bf{y}_n)$, this completes the proof.
\end{proof}

The first important consequence of Propositions~\ref{prop:compose-still-surj} and~\ref{prop:isos-surj} is the following.

\begin{cor}\label{cor:model-surj-iso-invar}
If
\begin{center}
$\phantom{i}$\xymatrix{
(\X^G,\mu,S,d_\X) \ar_\Phi[d]\ar^-\cong[r] & (\X_1^G,\mu_1,S,d_{\X_1}) \ar^\Psi[d]\\
(\Y^G,\nu,S,d_\Y) \ar^-\cong[r] & (\Y_1^G,\nu_1,S,d_{\Y_1})
}
\end{center}
is a commutative square of factor maps in which the horizontal arrows are isomorphisms and $\Phi$ is model-surjective rel $\S$, then $\Psi$ is also model-surjective rel $\S$.

In particular, model-surjectivity rel $\S$ in Definition~\ref{dfn:model-surj} is independent of the choice of generating metrics $d_\X$ and $d_\Y$.\qed
\end{cor}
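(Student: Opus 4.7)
The plan is to decompose $\Psi$ as a composition of three factor maps, each of which is already known to be model-surjective rel $\S$. Let $\alpha$ and $\beta$ denote the top and bottom horizontal isomorphisms in the commutative square. Commutativity gives $\Psi\circ\alpha = \beta\circ\Phi$, hence $\Psi = \beta\circ\Phi\circ\alpha^{-1}$. The inverse $\alpha^{-1}$ is itself an isomorphism of metric $G$-processes of the form $\tilde\phi^G$ (this is how isomorphisms between shift-systems are presented in the paper, and is used implicitly in the proof of Proposition~\ref{prop:isos-surj} via the choice of $\tilde\phi$ with $\Phi^{-1}=\tilde\phi^G$). Thus $\alpha^{-1}$ and $\beta$ are model-surjective rel $\S$ by Proposition~\ref{prop:isos-surj}, while $\Phi$ is so by hypothesis. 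Two successive applications of Proposition~\ref{prop:compose-still-surj} then yield that $\Psi$ is model-surjective rel $\S$, completing the first part.

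For the ``in particular'' assertion, the strategy is to specialize the first part to a degenerate square in which only the choice of metrics changes. Given two choices of compact generating metrics $d_\X,d_\X'$ on $\X$ and $d_\Y,d_\Y'$ on $\Y$, take $(\X_1^G,\mu_1,S,d_{\X_1}) := (\X^G,\mu,S,d_\X')$ and $(\Y_1^G,\nu_1,S,d_{\Y_1}) := (\Y^G,\nu,S,d_\Y')$, and let both horizontal arrows be the identity map $\mathrm{id}_{\X^G} = \xi^G$, where $\xi:\X^G\to\X$ is the projection to the $e$-coordinate (and similarly for $\Y$). These horizontal arrows are trivially measure-theoretic isomorphisms of shift-systems, and the resulting square is commutative with $\Psi = \Phi$ as underlying measure-theoretic factor maps, merely viewed with respect to different choices of metric. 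The first part of the corollary then shows that model-surjectivity rel $\S$ with respect to $(d_\X,d_\Y)$ implies the same property with respect to $(d_\X',d_\Y')$; the reverse implication follows by symmetry.

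I do not anticipate any substantive obstacle, since both propositions invoked are already available in this section. The only mildly subtle point is verifying that $\alpha^{-1}$ qualifies as an isomorphism in the sense required by Proposition~\ref{prop:isos-surj}, i.e.\ that it has the form $\tilde\phi^G$; but this is part of the very definition of an isomorphism of shift-systems used throughout the paper, so no additional argument is needed.
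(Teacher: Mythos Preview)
Your proposal is correct and matches the paper's intended argument: the corollary is stated with a \qed\ immediately after it, as a direct consequence of Propositions~\ref{prop:compose-still-surj} and~\ref{prop:isos-surj}, via exactly the decomposition $\Psi = \beta\circ\Phi\circ\alpha^{-1}$ you describe. The ``in particular'' clause is handled just as you suggest, by taking the horizontal arrows to be identities with the metrics changed.
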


We now have all the ingredients needed to prove Theorem A.

\begin{proof}[Proof of Theorem A]
Let $n_1 < n_2 < \dots$, and let $\bf{y}_i,\bf{w}_i \in \Y^{V_{n_i}}$ be sequences such that
\[P^{\s_{n_i}}_{\bf{y}_i},P^{\s_{n_i}}_{\bf{w}_i} \stackrel{\rm{weak}^\ast}{\to} \nu.\]
We must show that these pairs may be connected by $o(1)$-paths consisting of good models, as in Definition~\ref{dfn:connected-model}.  By passing to the sofic sub-approximation $(\s_{n_i})_{i\geq 1}$, we may relabel all these sequences and so assume that $i=n_i$ for all $i$, and hence write the index as $n$ itself.

Now let $\bf{x}_n,\bf{z}_n \in \X^{V_n}$ be the sequences given by the model-surjectivity of $\Phi$ applied to the sequences $\bf{y}_n$ and $\bf{w}_n$, respectively. Since $(\X^G,\mu,S,d_\X)$ has connected model space rel $\S$, there are parameters $\delta_n \downarrow 0$ and a sequence of $\delta_n$-paths
\[\bf{x}_n = \bf{x}_{n,0},\bf{x}_{n,1},\dots,\bf{x}_{n,\ell_n} = \bf{z}_n\]
which are eventually contained in $\O(\calO,\s_n)$ for any w$^\ast$-neighbourhood $\calO$ of $\mu$.

Also, let $\psi_k \aL \phi$ rel $(\mu,d_\X,d_\Y)$.  Corollary~\ref{cor:combined-good-behav} gives parameters $\eps_k \downarrow 0$ and $K_k < \infty$, a sequence of w$^\ast$-neighbourhoods $\calO_1 \supseteq \calO_2 \supseteq \dots$ of $\mu$, and a base of w$^\ast$-neighbourhoods $\calN_1 \supseteq \calN_2 \supseteq \dots$ at $\nu$, such that for every $k$ we have both
\begin{equation}\label{eq:alm-Lip-ass}
\psi_k^{\s_n}|\O(\calO_k,\s_n) \quad \hbox{is}\ \eps_k\hbox{-almost}\ K_k\hbox{-Lipschitz for all sufficiently large $n$}
\end{equation}
and
\begin{equation}\label{eq:approx-by-Lip-maps}
\psi_k^{\s_n}\big(\O(\calO_k,\s_n)\big) \subseteq \O(\calN_k,\s_n) \quad \hbox{for all sufficiently large $n$}.
\end{equation}

Now choose a sequence $k_1 \leq k_2 \leq \dots$ growing so slowly that all of the following hold:
\begin{enumerate}
\item[i)] we have $\eps_{k_n} + K_{k_n}\delta_n \to 0$ as $n\to\infty$ (this is possible because $\eps_k \to 0$ and $\delta_n \to 0$);
\item[ii)] we have
\[\{\bf{x}_{n,0},\bf{x}_{n,1},\dots,\bf{x}_{n,\ell_n}\} \subseteq \O(\calO_{k_n},\s_n)\]
for all sufficiently large $n$;
\item[iii)] we have
\[d_\Y^{(V_n)}\big(\bf{y}_n,\psi_{k_n}^{\s_n}(\bf{x}_n)\big),\ d_\Y^{(V_n)}\big(\bf{w}_n,\psi_{k_n}^{\s_n}(\bf{z}_n)\big) \to 0 \quad \hbox{as}\ n\to\infty,\]
as promised by condition 2 in Lemma~\ref{lem:model-surj-equiv}.
\item[iv)] we have
\[\psi_{k_n}^{\s_n}|\O(\calO_{k_n},\s_n) \quad \hbox{is}\ \eps_{k_n}\hbox{-almost}\ K_{k_n}\hbox{-Lipschitz}\]
for all sufficiently large $n$, as is possible by~(\ref{eq:alm-Lip-ass})
\item[v)] we have
\[\psi^{\s_n}_{k_n}\big(\O(\calO_{k_n},\s_n)\big) \subseteq \O(\calN_{k_n},\s_n)\]
for all sufficiently large $n$, as is possible by~(\ref{eq:approx-by-Lip-maps}).
\end{enumerate}

Finally, define
\[\bf{y}_{n,i} := \psi_{k_n}^{\s_n}(\bf{x}_{n,i})\]
for each $n$ and each $i = 0,1,\dots,\ell_n$.

By properties (ii) and (iv), the sequence
\[\bf{y}_{n,0},\bf{y}_{n,1},\dots,\bf{y}_{n,\ell_n}\]
is a $(\eps_{k_n} + K_{k_n}\delta_n)$-path according to $d_\Y^{(V_n)}$ for all sufficiently large $n$.  Combined with properties (i) and (iii), this shows that
\[\bf{y}_n,\bf{y}_{n,0},\bf{y}_{n,1},\dots,\bf{y}_{n,\ell_n},\bf{w}_n\]
is a $\delta_n'$-path from $\bf{y}_n$ to $\bf{w}_n$ for some sequence of parameters $\delta'_n \downarrow 0$.

Finally, properties (ii) and (v) imply that these image paths are eventually contained in $\O(\calN,\s_n)$ for every w$^\ast$-neighbourhood $\calN$ of $\nu$, because $\calN_1 \supseteq \calN_2 \supseteq \dots$ is a base at $\nu$.
\end{proof}

\section{Connected model spaces for inverse limits}

This section gives the proof of Theorem B.

\begin{proof}[Proof of Theorem B]
Up to isomorphism, the setting of this theorem may be represented as follows.  Let $(\X_i^G,\mu_i,S)$, $i=1,2,\dots$ be an infinite sequence of $G$-processes, and let
\[\l \in \Pr(\X_1^G\times \X_2^G\times \cdots)\]
be a joining of all of them.  Let $\pi_k:\prod_{i\geq 1}\X_i \to \prod_{i=1}^k \X_i$ be the coordinate projection for each $k$, and let $\l_k := \pi^G_{k\ast}\l$.  We assume that the $G$-system
\[\Big(\prod_{i=1}^k \X_i^G,\l_k,S\Big)\]
has connected model spaces rel $\S$ for each $k$, and must prove the same for the infinite joining $\l$.  This is easiest using the reformulation in condition 2 of Proposition~\ref{prop:reform}.

For each $i$, let $d_{\X_i}$ be a compact generating metric of diameter at most $1$ for the space $\X_i$. Define metrics $d_k$ on $\prod_{i=1}^k \X_i$ and $d$ on $\prod_{i\geq 1}\X_i$ by
\[d_k\big((x_i)_{i=1}^k,(x'_i)_{i=1}^k\big) := \sum_{i=1}^k 2^{-i}d_{\X_i}(x_i,x_i')\]
and
\[d\big((x_i)_{i\geq 1}^k,(x'_i)_{i\geq 1}\big) := \sum_{i\geq 1} 2^{-i}d_{\X_i}(x_i,x_i'),\]
so these generate the compact product topologies on their respective spaces.

Now suppose that $\calO$ is a w$^\ast$-neighbourhood of $\l$.  By shrinking it if necessary, we may assume that it has the form
\[\big\{\theta:\ \pi^G_{k\ast}\theta \in \calO_1\big\}\]
for some $k \in \bbN$ and some w$^\ast$-neighbourhood $\calO_1$ of $\l_k$, since sets of this form are a base of w$^\ast$-neighbourhoods around $\l$.  Under this assumption, we obtain also
\begin{multline}\label{eq:omega-is-projected}
\O(\calO,\s_n) = \Big\{\bf{x} \in \Big(\prod_{i\geq 1}\X_i\Big)^{V_n}:\ \pi_k^{\s_n}(\bf{x}) \in \O(\calO_1,\s_n)\Big\} \\
= \O(\calO_1,\s_n) \times \Big(\prod_{i\geq n+1} \X_i\Big)^{V_n} \quad \hbox{for each}\ n.
\end{multline}

Since the process defined by $\l_k$ has connected model spaces rel $\S$, Proposition~\ref{prop:reform} gives another w$^\ast$-neighbourhood $\calO_1' \subseteq \calO_1$ of $\l_k$ such that the pair
\[\big(\O(\calO_1',\s_n),\O(\calO_1,\s_n)\big)\]
is relatively $\delta$-connected according to $d_k^{(V_n)}$ for all sufficiently large $n$.

Let
\[\calO' := \big\{\theta:\ \pi^G_{k\ast}\theta \in \calO_1'\big\} \subseteq \calO.\]
Equation~(\ref{eq:omega-is-projected}) has an obvious analog for $\calO'$ and $\calO_1'$.  Combining these, it follows that the pair
\[\big(\O(\calO',\s_n),\O(\calO,\s_n)\big)\]
is also relatively $\delta$-connected according to $d^{(V_n)}$ for all sufficiently large $n$.  This verifies condition 2 in Proposition~\ref{prop:reform} for the process defined by $\l$.
\end{proof}

\section{Connected model spaces for Bernoulli systems}

This subsection proves Theorem C.  We actually prove the slightly stronger property (3) from Proposition~\ref{prop:reform}. Let $(\X,d)$ be a compact metric space of diameter at most $1$, and let $\nu \in \Pr(\X)$.

This section makes several simple appeals to the phenomenon of measure concentration for product measures and Hamming metrics: see, for instance,~\cite{Led--book} for a dedicated exposition and~\cite[Chapter 3$\frac{1}{2}$]{Gro01} for a geometrically-flavoured overview.  The specific result that we need is the following: see~\cite[Corollary 1.17]{Led--book}.

\begin{prop}\label{prop:Tal}
For any $\delta > 0$ there is a $\b > 0$ such that
\[\sup_f\nu^{\times n}\Big\{\Big|f - \int f\,\d\nu^{\times n}\Big| \geq \delta\Big\} = O(\rme^{-\b n})\]
for all $n\geq 1$, where the supremum is over all functions $f:\X^n \to \bbR$ which are $1$-Lipschitz for the normalized Hamming distance $d^{(n)}$. \qed
\end{prop}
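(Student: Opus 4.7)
The plan is to derive the bound from McDiarmid's bounded differences inequality (equivalently, Azuma--Hoeffding applied to the Doob martingale of the coordinate filtration), which is the standard route to concentration for Lipschitz functions under Hamming metrics. The crucial observation is that the normalization in $d^{(n)}$ interacts nicely with the diameter bound on $(\X,d)$: if $\bf{x}, \bf{x}' \in \X^n$ differ only in a single coordinate, then
\[d^{(n)}(\bf{x},\bf{x}') = \frac{1}{n}d(x_i,x_i') \leq \frac{1}{n},\]
using $\mathrm{diam}(\X,d) \leq 1$. Consequently any $f:\X^n \to \bbR$ which is $1$-Lipschitz for $d^{(n)}$ satisfies the bounded-differences property $|f(\bf{x}) - f(\bf{x}')| \leq 1/n$ for all pairs $\bf{x},\bf{x}'$ differing in one coordinate.

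Next I would let $X_1,\dots,X_n$ be i.i.d.\ with law $\nu$ and form the Doob martingale $M_k := \sfE[f(X_1,\dots,X_n) \mid X_1,\dots,X_k]$, so $M_0 = \int f\,\d\nu^{\times n}$ and $M_n = f(X_1,\dots,X_n)$. By conditioning and applying the bounded-differences inequality to the two conditional expectations involved, one checks that the martingale increments $|M_k - M_{k-1}|$ are uniformly bounded by $1/n$. Azuma's inequality then yields
\[\nu^{\times n}\bigl\{|f - \sfE f| \geq \delta\bigr\} \leq 2\exp\!\Bigl(\!-\frac{\delta^2 n}{2}\Bigr),\]
so one can take any $\b < \delta^2/2$, uniformly over all such $f$.

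There is essentially no obstacle here; the only delicate bookkeeping is tracking the factor of $1/n$ through the normalization. Since the excerpt attributes the result to Ledoux's book and marks it $\qed$, I would simply cite \cite[Corollary 1.17]{Led--book} (or, alternatively, McDiarmid's original inequality) rather than writing the martingale argument out in full.
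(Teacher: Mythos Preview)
Your proposal is correct, and it matches the paper's treatment: the proposition is stated with a \qed\ and no proof, only the citation to \cite[Corollary 1.17]{Led--book}. The McDiarmid/Azuma sketch you give is exactly the standard derivation behind that cited result (and your bookkeeping of the $1/n$ normalization and the diameter bound $\rm{diam}(\X,d)\leq 1$ is accurate), so there is nothing to add.
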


Next we identify certain special neighbourhoods of the product measure $\nu^{\times G}$.  Let $F$ be a finite subset of $G$, and for each $D \subseteq F$ define an operator $E_D$ on $C(\X^F)$ as follows.  First, if $F = \{g_1,\ldots,g_m\}$ and $D = F\setminus \{g_j\}$ for some $j \leq m$, then
\[E_Df(x) := \int_X f(x_{g_1},\ldots,x_{g_{j-1}},y,x_{g_{j+1}},\ldots,x_{g_m})\ \nu(\d y).\]
In general, if $D = F \setminus \{g_{j_1},\ldots,g_{j_k}\}$, then
\[E_D := E_{F\setminus \{g_{j_1}\}}\circ \cdots \circ E_{F\setminus \{g_{j_k}\}},\]
where the order of this composition is unimportant.  This $E_D$ is the operator of conditional expectation with respect to $\nu^{\times F}$ onto the $\s$-algebra of Borel subsets of $\X^F$ that depend only on coordinates in $D$. It follows that
\begin{equation}\label{eq:ED-meas-pres}
\int f\,\d \nu^{\times F} = \int E_Df\,\d \nu^{\times F} \quad \forall f \in C(\X^F).
\end{equation}

Now suppose that $\F \subseteq C(\X^G)$ is a family of $F$-local functions.  Let us call it \textbf{hereditary} if every member of $\F$ is $1$-Lipschitz according to $d_\X^{(F)}$ and if
\[f \in \F \quad \Longrightarrow \quad E_Df \in \F \quad \forall D\subseteq F.\]
It is easily checked that each $E_D$ preserves the property of being $1$-Lipschitz with respect to $d_\X^{(F)}$. A \textbf{hereditary neighbourhood} of $\nu^{\times G}$ is a w$^\ast$-neighbourhood of the form
\begin{equation}\label{eq:hered-nhood}
\calO := \Big\{\theta\in \Pr(\X^G):\ \int f\,\d\theta \approx_\eps \int f\,\d\nu^{\times G}\ \forall f\in\F\Big\}.
\end{equation}
for some finite $F\subseteq G$, some $\eps > 0$, and some finite hereditary family $\F$ of $F$-local continuous functions.

\begin{prop}\label{prop:B+}
If $\calO$ is a hereditary neighbourhood of $\nu^{\times G}$ and $\delta > 0$, then the set $\O(\calO,\s_n)$ is $\delta$-connected according to $d^{(V_n)}$ for all sufficiently large $n$.
\end{prop}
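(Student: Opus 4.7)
The strategy is to connect any two good models $\bf{x},\bf{y} \in \O(\calO,\s_n)$ through a random ``Bernoulli midpoint'' $\bf{z} \sim \nu^{\times V_n}$, swapping coordinates one at a time in a uniformly random order. Concretely, let $\pi$ be a uniformly random element of $\rm{Sym}(V_n)$ independent of $\bf{z}$, set $\bf{z}^{\bf{x}}_0 := \bf{x}$, and for $1 \leq i \leq |V_n|$ let $\bf{z}^{\bf{x}}_i$ agree with $\bf{z}$ on $\pi(\{1,\ldots,i\})$ and with $\bf{x}$ elsewhere, so that $\bf{z}^{\bf{x}}_{|V_n|} = \bf{z}$; construct the analogous leg $\bf{z} = \bf{z}^{\bf{y}}_{|V_n|},\ldots,\bf{z}^{\bf{y}}_0 = \bf{y}$. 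Consecutive models differ in a single coordinate, so their $d^{(V_n)}$-distance is at most $1/|V_n| < \delta$ once $n$ is large enough. It therefore suffices to show that with probability tending to $1$ every intermediate model of the concatenated path lies in $\O(\calO,\s_n)$.

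The hereditary property of $\F$ controls the expected empirical averages $h_{f,i}(\bf{z},\pi) := |V_n|^{-1}\sum_v f(\Pi^{\s_n}_v(\bf{z}^{\bf{x}}_i))$ for $f \in \F$. For each vertex $v$ whose shifts $\{\s_n^g(v) : g \in F\}$ are distinct (a $(1-o(1))$-fraction by near-freeness of $\s_n$), setting $D_v := \{g \in F : \s_n^g(v) \notin \pi(\{1,\ldots,i\})\}$, conditioning on $\pi$ and integrating the random $\bf{z}$-coordinates gives $\sfE_{\bf{z}}[f(\Pi^{\s_n}_v(\bf{z}^{\bf{x}}_i))\mid\pi] = (E_{D_v}f)(\Pi^{\s_n}_v(\bf{x}))$. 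Averaging over $\pi$, the probability that $D_v = D$ approaches $t^{|F\setminus D|}(1-t)^{|D|}$ for $t := i/|V_n|$, so
\[\sfE h_{f,i} = \sum_{D \subseteq F} t^{|F\setminus D|}(1-t)^{|D|} \int E_D f\,\d P^{\s_n}_{\bf{x}} + o(1).\]
By heredity, each $E_D f \in \F$; by~(\ref{eq:ED-meas-pres}), $\int E_D f\,\d\nu^{\times G} = \int f\,\d\nu^{\times G}$; and $E_\emptyset f$ is a constant, contributing no error. The goodness of $\bf{x}$ then yields the strict bound $|\sfE h_{f,i} - \int f\,\d\nu^{\times G}| < (1-t^{|F|})\eps + o(1)$, with margin $t^{|F|}\eps$ away from the endpoints.

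For concentration, a single coordinate change of $\bf{z}$ moves $h_{f,i}$ by at most $O(1/|V_n|)$ (each coordinate appears in at most $|F|$ pullback names and $f$ is $1$-Lipschitz for $d_\X^{(F)}$), while a single transposition of $\pi$ moves it by $O(|F|/|V_n|)$. McDiarmid's bounded-differences inequality, applied to the martingale that reveals $\pi$ and then the $\bf{z}$-coordinates, gives $\sfP[|h_{f,i}-\sfE h_{f,i}|>\eta] \leq 2\exp(-c_{F,\eta}|V_n|)$ for any $\eta > 0$. A union bound over the finite family $\F$, the $|V_n|+1$ indices $i$, and the two halves of the path keeps the total failure probability at $o(1)$, and the probabilistic method produces a deterministic path with the required property.

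The main obstacle is ensuring the strict defining inequalities of $\calO$ near the endpoints of the path, where the hereditary margin $t^{|F|}\eps$ shrinks to zero. Near $i=0$ one supplements with the deterministic step-wise Lipschitz estimate $|h_{f,i}-h_{f,0}| \leq i/|V_n|$, noting that $h_{f,0} = \int f\,\d P^{\s_n}_{\bf{x}}$ already strictly satisfies the $\calO$-inequality; a symmetric argument handles the regime near $\bf{z}$, using that a typical iid $\bf{z}$ has empirical distribution strictly within $\calO$ by Proposition~\ref{prop:Tal}. Balancing the hereditary margin, the step-wise Lipschitz control, and the exponential concentration rate is the core technical task.
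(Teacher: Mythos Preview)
Your overall strategy is close in spirit to the paper's: both interpolate randomly between $\bf{x}$ and $\bf{y}$ via a ``Bernoullified'' intermediate state, use the hereditary closure of $\F$ under the operators $E_D$ to show that expected empirical integrals stay strictly inside the $\eps$-window, and then invoke product-measure concentration. The paper implements this as a lazy random walk that, at each of a \emph{bounded} number $s$ of time steps, independently resamples every coordinate with probability $1-\k\in(0,\delta)$; two such walks started at $\bf{x}$ and $\bf{y}$ are then coupled to land within $\delta$ of each other after $s$ steps. The crucial difference from your proposal is the step size: the paper resamples a positive \emph{fraction} of coordinates per step, so at every time $t\geq 1$ the hereditary margin is at least $(1-\k)^{|F|}\eps$, a fixed positive constant independent of $n$. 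A single fixed deviation level then suffices for concentration, and the union bound is over only $O(s)$ steps.

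Your coordinate-by-coordinate scheme has a genuine gap at exactly the endpoint issue you flag, and your proposed remedy does not close it. The deterministic estimate $|h_{f,i}-h_{f,0}|\leq C i/|V_n|$ combined with the strict inequality $|h_{f,0}-\int f\,\d\nu^{\times G}|<\eps$ yields $|h_{f,i}-\int f\,\d\nu^{\times G}|<\eps$ only when $Ci/|V_n|$ is smaller than the slack $\eps-|h_{f,0}-\int f\,\d\nu^{\times G}|$, and the hypothesis $\bf{x}\in\O(\calO,\s_n)$ gives no lower bound whatsoever on that slack: $\bf{x}$ may sit within $|V_n|^{-10}$ of the boundary of $\O(\calO,\s_n)$. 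For such $\bf{x}$ neither the deterministic regime nor the concentration regime (which requires the margin $t^{|F|}\eps$ to dominate some fixed $\eta>0$, hence $t$ bounded below independently of $n$) covers the first few steps, and there is no reason the random path avoids leaving $\O(\calO,\s_n)$ there. (Your argument near $\bf{z}$ is sound, since Proposition~\ref{prop:Tal} does supply a genuine quantitative margin of order $\eps$.) The easiest repair is to resample in blocks of size $\lfloor(\delta/2)|V_n|\rfloor$ rather than one coordinate at a time: consecutive models are still within $\delta$ in $d^{(V_n)}$, the number of steps becomes $O(1/\delta)$, and already the first step produces a margin of order $(\delta/2)^{|F|}\eps$. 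That is essentially the paper's random walk with $\k\approx 1-\delta/2$.
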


We will see the importance of assuming that $\calO$ is hereditary during the course of the proof.

\begin{proof}[Proof of Theorem C from Proposition~\ref{prop:B+}]
Hereditary neighbourhoods form a base for the w$^\ast$-topology at $\mu$.  Therefore Proposition~\ref{prop:B+} verifies condition 3 in Proposition~\ref{prop:reform}, which implies connected model spaces.
\end{proof}

Proposition~\ref{prop:B+} will be proved by showing how any pair of points in $\O(\calO,\s_n)$ may be connected by a `random' $\delta$-path, provided $n$ is sufficiently large.

We insert randomness into the proof as follows.  Fix $\k \in (0,1)$, to be specified later. For each $n$, let $(\bs{\xi}_{n,t})_{t\geq 0}$ be a discrete-time random walk on $\X^{V_n}$ with the following transition probabilities:
\begin{eqnarray}\label{eq:law-of-X_k}
\sfP(\bs{\xi}_{n,t+1} \in \,\cdot\,|\,\bs{\xi}_{n,t}) = \bigtimes_{v\in V_n}\big(\k\delta_{\xi_{n,t,v}} + (1 - \k)\nu\big),
\end{eqnarray}
where we write $\bs{\xi}_{n,t} = (\xi_{n,t,v})_{v \in V_n}$.  Thus, $\bs{\xi}_{n,t+1}$ is obtained from $\bs{\xi}_{n,t}$ by considering each coordinate in $\X^{V_n}$ independently, and either re-sampling it from the distribution $\nu$ with probability $1-\k$, or leaving it unchanged with probability $\k$.  Let $\sfP_n^\bf{x}$ be the law of $(\bs{\xi}_{n,t})_{t \geq 0}$ on $\X^{V_n}\times \X^{V_n} \times \cdots$ conditioned on starting from $\bs{\xi}_{n,0} = \bf{x}$, and let $\sfE_n^\bf{x}$ denote expectation with respect to $\sfP_n^\bf{x}$.

We will find a $\delta$-path between two good models $\bf{x}$ and $\bf{y}$ by starting a copy of this random walk at each of $\bf{x}$ and $\bf{y}$, and showing that after a certain bounded time the following hold:
\begin{itemize}
\item[(i)] these random walks have probably stayed inside the set of good models,
\item[(ii)] they have probably taken only steps smaller than $\delta$ in the Hamming distance, and
\item[(iii)] they can be coupled in such a way that with high probability they end up close to each other.
\end{itemize}

We break the necessary estimates into three separate lemmas.

\begin{lem}\label{lem:staying-in-model-spaces}
For any hereditary neighbourhood $\calO$ and any $t \in \bbN$, we have
\[\inf_{\bf{x} \in \O(\calO,\s_n)}\sfP^\bf{x}_n\big\{\bs{\xi}_{n,t} \in \O(\calO,\s_n)\big\}\to 1 \quad \hbox{as}\ n\to\infty.\]
\end{lem}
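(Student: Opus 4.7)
The plan is to identify the exact distribution of $\bs{\xi}_{n,t}$ given $\bs{\xi}_{n,0}=\bf{x}$, then combine an expectation computation (in which the hereditary structure of $\F$ plays a decisive role) with a standard concentration-of-measure bound for Hamming-Lipschitz functionals. An easy induction on $t$, using the transition rule~\eqref{eq:law-of-X_k}, shows that the coordinates $\xi_{n,t,v}$ are independent with law $\mu_v:=\k^t\delta_{x_v}+(1-\k^t)\nu$, so $\bs{\xi}_{n,t}$ is distributed as $\bigtimes_{v\in V_n}\mu_v$ on $\X^{V_n}$. Since $\calO$ is defined by finitely many test functions $f\in\F$, a union bound reduces the lemma to showing, for each fixed $f\in\F$, that the random quantity $g(\bs{\xi}_{n,t}):=\int f\,\d P^{\s_n}_{\bs{\xi}_{n,t}}$ lies within $\eps$ of $\int f\,\d\nu^{\times G}$ with probability tending to $1$, uniformly in $\bf{x}\in\O(\calO,\s_n)$. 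The functional $g$ is $1$-Lipschitz for $d^{(V_n)}$: this follows from the fact that $f$ is $F$-local and $1$-Lipschitz for $d^{(F)}$ together with the fact that each $\s_n(h)$ is a bijection of $V_n$.

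The key computation is $\sfE g(\bs{\xi}_{n,t})$. For those $v\in V_n$ whose $F$-translates $\{\s_n^h(v):h\in F\}$ are pairwise distinct---and, by soficness of $\s_n$, this holds for all but $o(|V_n|)$ of them---a direct expansion of the product $\prod_{h\in F}\mu_{\s_n^h(v)}$ yields
\[\sfE[f(\Pi^{\s_n}_v(\bs{\xi}_{n,t}))]=\sum_{D\subseteq F}\k^{t|D|}(1-\k^t)^{|F|-|D|}(E_Df)(\Pi^{\s_n}_v(\bf{x})),\]
so after averaging over $v$,
\[\sfE g(\bs{\xi}_{n,t})=\sum_{D\subseteq F}c_D\smint E_Df\,\d P^{\s_n}_\bf{x}+o(1),\qquad c_D:=\k^{t|D|}(1-\k^t)^{|F|-|D|},\ \sum_D c_D=1.\]
The hereditariness of $\F$ now enters crucially: every $E_Df$ again lies in $\F$, and by~\eqref{eq:ED-meas-pres} one has $\int E_Df\,\d\nu^{\times G}=\int f\,\d\nu^{\times G}$, so the hypothesis $\bf{x}\in\O(\calO,\s_n)$ forces $|\int E_Df\,\d P^{\s_n}_\bf{x}-\int f\,\d\nu^{\times G}|<\eps$ for every $D\subseteq F$. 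Taking a convex combination yields $|\sfE g(\bs{\xi}_{n,t})-\int f\,\d\nu^{\times G}|<\eps+o(1)$, uniformly in $\bf{x}$.

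To finish, realize $\bs{\xi}_{n,t}$ as a function of the i.i.d.\ family $(\eta_v,Y_v)_{v\in V_n}$ with $\eta_v\sim\mathrm{Bernoulli}(\k^t)$ and $Y_v\sim\nu$, via $\xi_{n,t,v}:=\eta_vx_v+(1-\eta_v)Y_v$. Changing a single pair $(\eta_v,Y_v)$ perturbs $\bs{\xi}_{n,t}$ only in coordinate $v$, hence perturbs $g$ by at most $1/|V_n|$. McDiarmid's bounded-differences inequality (equivalently, the standard extension of Proposition~\ref{prop:Tal} to product measures on product spaces) therefore gives $\sfP^\bf{x}_n\{|g(\bs{\xi}_{n,t})-\sfE g(\bs{\xi}_{n,t})|\geq\delta\}\leq 2\rme^{-2\delta^2|V_n|}$ for every $\delta>0$, and combining this with the expectation bound above produces the required $o(1)$ failure probability, uniformly over $\bf{x}\in\O(\calO,\s_n)$. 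The main obstacle is delicately balancing the strict inequality $|r_D|=|\int E_Df\,\d P^{\s_n}_\bf{x}-\int f\,\d\nu^{\times G}|<\eps$ against the additive concentration error $\delta$ and the sofic error $o(1)$: one must choose $\delta$ small enough to exploit the slack, which is automatic for $\bf{x}$ well inside $\O(\calO,\s_n)$ but requires a small shrinkage of $\calO$ to handle points near its boundary. The whole argument rests on the structural identity $E_Df\in\F$, which in effect promotes $P^{\s_n}_\bf{x}$ to behave like $\nu^{\times G}$ on the relevant test functions, so that the random walk neither drifts nor spreads enough to escape the neighbourhood.
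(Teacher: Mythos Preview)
Your computation of the law of $\bs{\xi}_{n,t}$, the expansion over subsets $D\subseteq F$, the use of hereditariness to bring each $E_Df$ back into $\F$, and the concentration step are all correct and match the paper's argument.  The Lipschitz bound for $g$ is also right.

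The gap is exactly the one you flag as ``the main obstacle'' and then do not actually close.  From $|r_D|<\eps$ for every $D$ you only get $|\sfE g(\bs{\xi}_{n,t})-\int f\,\d\nu^{\times G}|<\eps+o(1)$, and adding the concentration error $\delta>0$ pushes you to $\eps+\delta+o(1)$, which never beats $\eps$.  Your suggested fix---shrinking $\calO$---proves a different statement (from $\O(\calO',\s_n)$ you reach $\O(\calO,\s_n)$), not the one claimed; and since the lemma is applied as stated (the infimum is over $\bf{x}\in\O(\calO,\s_n)$, with no inner neighbourhood available), this does not suffice.  The strictness of each individual inequality $|r_D|<\eps$ does not help either: there is no uniform margin as $\bf{x}$ ranges over $\O(\calO,\s_n)$, so the concentration parameter $\delta$ would have to depend on $\bf{x}$ and the bound $2\rme^{-2\delta^2|V_n|}$ need not tend to $0$ uniformly.

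The paper resolves this with a single extra observation you are missing: the term $D=\emptyset$ in your expansion is special, because $E_\emptyset f$ is the constant $\int f\,\d\nu^{\times F}$, so $r_\emptyset=0$ exactly, not merely $|r_\emptyset|<\eps$.  Separating that term out gives
\[
\Big|\sfE g(\bs{\xi}_{n,t})-\int f\,\d\nu^{\times G}\Big|\leq \sum_{D\neq\emptyset}c_D\,\eps + o(1)=\big(1-(1-\k^t)^{|F|}\big)\eps+o(1),
\]
which is bounded by $\big(1-\tfrac12(1-\k^t)^{|F|}\big)\eps$ for large $n$.  Now there is a fixed positive margin $\tfrac12(1-\k^t)^{|F|}\eps$ below $\eps$, uniform in $\bf{x}$, and the concentration step with $\delta=\tfrac12(1-\k^t)^{|F|}\eps$ finishes the proof exactly as you wrote it.
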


\begin{proof}
Let $\calO$ be as in~(\ref{eq:hered-nhood}) for some $\eps > 0$ and some hereditary family $\F$ of $F$-local functions.

Since $\F$ is finite, it suffices to show that for any one $f \in \F$ we have
\[\inf_{\bf{x} \in \O(\calO,\s_n)}\sfP^\bf{x}_n\Big\{\int f\,\d P^{\s_n}_{\bs{\xi}_{n,t}} \approx_\eps \int f\,\d \nu^{\times G} \Big\} \to 1.\]

We break this estimate into two further steps.

\vspace{7pt}

\emph{Step 1.}\quad Observe that
\[\sfE_n^\bf{x}\int f\,\d P^{\s_n}_{\bs{\xi}_{n,t}} = \frac{1}{|V_n|}\sum_{v \in V_n}\sfE_n^\bf{x}f\big(\Pi^{\s_n}_v(\bs{\xi}_{n,t})\big) = \frac{1}{|V_n|}\sum_{v \in V_n}\sfE_n^\bf{x}f\big((\bs{\xi}_{n,t,\s_n^g(v)})_{g \in F}\big).\]
Since $F$ is finite and $\S$ is a sofic approximation, as $n\to\infty$ it holds with high probability in the choice of $v \in V_n$ that
\begin{equation}\label{eq:distinct-pts}
\hbox{the points} \  \s_n^g(v) \  \hbox{for} \  g \in F \  \hbox{are distinct}.
\end{equation}
For such $v$, iterating the formula~(\ref{eq:law-of-X_k}) gives
\[\sfE_n^\bf{x}f\big((\bs{\xi}_{n,t,\s_n^g(v)})_{g \in F}\big) = \sum_{D\subseteq F} \k^{t|D|}(1-\k^t)^{|F| - |D|}E_Df(\Pi^{\s_n}_v(\bf{x})).\]

Let $D$ be a random subset of $F$ which contains each element of $F$ independently with probability $\k^t$.  Let $\sfP'$ be its law and let $\sfE'$ be expectation with respect to $\sfP'$.  Then the above leads to
\[\sfE_n^\bf{x}\int f\,\d P^{\s_n}_{\bs{\xi}_{n,t}} = \sfE'\Big(\frac{1}{|V_n|}\sum_{v \in V}E_Df(\Pi^{\s_n}_v(\bf{x}))\Big) + o(1),\]
where the $o(1)$-correction results from those few vertices $v$ which fail the requirement~(\ref{eq:distinct-pts}).

Therefore
\begin{eqnarray*}
&&\Big|\sfE_n^\bf{x}\int f\,\d P^{\s_n}_{\bs{\xi}_{n,t}} - \int f\,\d \nu^{\times G}\Big|\\
&&\leq \sfE'\Big|\frac{1}{|V_n|}\sum_{v \in V}E_Df(\Pi^{\s_n}_v(\bf{x})) - \int f\,\d \nu^{\times G}\Big| + o(1)\\
&&= \sfE'\Big|\frac{1}{|V_n|}\sum_{v \in V}E_Df(\Pi^{\s_n}_v(\bf{x})) - \int E_Df\,\d \nu^{\times G}\Big| + o(1)\\
&&= \sfE'\Big|\int E_Df\,\d (P^{\s_n}_\bf{x} - \nu^{\times G})\Big| + o(1)\\
&&= \sfE'\Big(\Big|\int E_Df\,\d (P^{\s_n}_\bf{x} - \nu^{\times G})\Big|\cdot 1_{\{D \neq \emptyset\}}\Big) + o(1),
\end{eqnarray*}
where the equality of second and third lines uses~(\ref{eq:ED-meas-pres}), and the last equality uses that $E_\emptyset f$ is constant and equal to $\int f\,\d \nu^{\times F}$.  Since $\bf{x} \in \O(\calO,\s_n)$ with $\calO$ as in~(\ref{eq:hered-nhood}), the last line above is strictly bounded by
\[\sfP'\{D \neq \emptyset\}\cdot \eps + o(1) = (1 - (1-\k^t)^{|F|})\eps + o(1),\]
where the $o(1)$-correction depends only on the sofic approximation $\S$ and on $\|f\|_\infty$.  Therefore this bound is strictly less than, say, $(1 - \frac{1}{2}(1-\k^t)^{|F|})\eps$ for all sufficiently large $n$.

\vspace{7pt}

\emph{Step 2.}\quad On the other hand, we may regard the quantity
$\int f\,\d P^{\s_n}_{\bs{\xi}_{n,t}}$ as a random variable on the probability space
\[\big(\X^{V_n},\sfP_n^\bf{x}\{\bs{\xi}_{n,t}\in\,\cdot\,\}\big).\]
Using the initial condition $\bs{\xi}_{n,0} = \bf{x}$ and the transition probabilities~(\ref{eq:law-of-X_k}), the probability measure here is equal to
\[\bigtimes_{v\in V_n}\big(\k^t\delta_{x_v} + (1 - \k^t)\nu\big),\]
which is a product measure on $\X^{V_n}$.  By the definition of the empirical distribution, and recalling that $f$ is $F$-local and $1$-Lipschitz according to $d_\X^{(F)}$, this random variable is an $|F|$-Lipschitz function on this product space.  Therefore Proposition~\ref{prop:Tal} gives some $\b > 0$, depending only on the ratio $(1-\k^t)^{|F|}\eps/|F|$, such that
\[\sfP_n^\bf{x}\Big\{\Big|\int f\,\d P^{\s_n}_{\bs{\xi}_{n,t}} - \sfE_n^\bf{x}\int f\,\d P^{\s_n}_{\bs{\xi}_{n,t}}\Big| \geq \frac{1}{2}(1-\k^t)^{|F|})\eps\Big\} = O\big(\rme^{-\b|V_n|}\big).\]
Thus this probability tends to $0$ as $n\to\infty$ uniformly in $\bf{x}$.

\vspace{7pt}

Combining the estimates from Steps 1 and 2 completes the proof.
\end{proof}

\begin{lem}\label{lem:small-steps}
Suppose that $1 - \k < \delta$.  For any $t \in \bbN \cup \{0\}$, the random walks constructed above satisfy
\[\inf_{\bf{x} \in \X^{V_n}}\sfP^\bf{x}_n\big\{d^{(V_n)}(\bs{\xi}_{n,t},\bs{\xi}_{n,t+1}) < \delta\big\}\to 1 \quad \hbox{as}\ n\to \infty.\]
\end{lem}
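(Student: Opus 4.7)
The plan is to reduce to the one-step case via the Markov property, then use classical concentration of measure for averages of independent bounded random variables.

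First, by conditioning on $\bs{\xi}_{n,t}$ and invoking the Markov property of the chain,
\[\sfP^\bf{x}_n\big\{d^{(V_n)}(\bs{\xi}_{n,t},\bs{\xi}_{n,t+1}) \geq \delta\big\} = \sfE^\bf{x}_n\Big[\sfP^{\bs{\xi}_{n,t}}_n\big\{d^{(V_n)}(\bs{\xi}_{n,0},\bs{\xi}_{n,1}) \geq \delta\big\}\Big].\]
Hence it suffices to prove the $t=0$ case uniformly in the starting point: that $\sfP^\bf{x}_n\{d^{(V_n)}(\bf{x},\bs{\xi}_{n,1}) \geq \delta\} \to 0$ as $n\to\infty$, uniformly in $\bf{x}\in\X^{V_n}$.

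Now, conditional on $\bs{\xi}_{n,0} = \bf{x}$, the transition law~(\ref{eq:law-of-X_k}) shows that the coordinates $\xi_{n,1,v}$, $v\in V_n$, are \emph{independent} with laws $\k\delta_{x_v} + (1-\k)\nu$ respectively. Consequently
\[d^{(V_n)}(\bf{x},\bs{\xi}_{n,1}) = \frac{1}{|V_n|}\sum_{v\in V_n} d(x_v,\xi_{n,1,v})\]
is an average of $|V_n|$ independent random variables, each taking values in $[0,1]$ since $d$ has diameter at most $1$. Its expectation is
\[\sfE^\bf{x}_n\big[d^{(V_n)}(\bf{x},\bs{\xi}_{n,1})\big] = \frac{1-\k}{|V_n|}\sum_{v\in V_n}\int d(x_v,y)\,\nu(\d y) \leq 1-\k,\]
which is strictly less than $\delta$ by hypothesis.

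Finally, Hoeffding's inequality (equivalently, a version of Proposition~\ref{prop:Tal} applied to the product measure $\bigtimes_{v\in V_n}(\k\delta_{x_v} + (1-\k)\nu)$ and the $1$-Lipschitz function $\bf{y} \mapsto d^{(V_n)}(\bf{x},\bf{y})$) gives
\[\sfP^\bf{x}_n\big\{d^{(V_n)}(\bf{x},\bs{\xi}_{n,1}) \geq \delta\big\} \leq \exp\big(-2(\delta-(1-\k))^2\,|V_n|\big),\]
which goes to $0$ as $n\to\infty$ uniformly in $\bf{x}$, since $|V_n|\to\infty$. Combined with the Markov reduction from the first paragraph, this completes the proof. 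There is no real obstacle here; the only mild point is the need to bootstrap from $t=0$ to arbitrary $t$ via Markov, since the exponential concentration is what does the real work.
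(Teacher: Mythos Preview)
Your proof is correct and follows essentially the same approach as the paper: reduce to $t=0$ by time-homogeneity (you phrase this via the Markov property, which is equivalent), then apply a concentration inequality to the average of independent $[0,1]$-valued random variables with mean at most $1-\k<\delta$. The only cosmetic difference is that the paper first bounds $d(x_v,\xi_{n,1,v})$ above by the indicator $1_{\{x_v\neq \xi_{n,1,v}\}}$ before invoking concentration, whereas you work directly with the distances; both routes give the same exponential bound uniformly in $\bf{x}$.
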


\begin{proof}
By time-homogeneity, it suffices to prove this when $t= 0$.

Since $\rm{diam}(\X,d) \leq 1$, we always have
\[d^{(V_n)}(\bf{x},\bs{\xi}_{n,1}) \leq \frac{1}{|V_n|}\sum_{v \in V_n}1_{\{x_v \neq \xi_{n,1,v}\}}.\]
This is an average of indicator functions of independent events, all of them having probability at most $1 - \k < \delta$.  Therefore another appeal to Proposition~\ref{prop:Tal} (or just the special case of a Chernoff bound) gives a $\b > 0$ for which
\[\sfP_n^\bf{x}\big\{d^{(V_n)}(\bf{x},\bs{\xi}_{n,1}) \geq \delta\big\} = O(\rme^{-\b|V_n|}).\]
Since the right-hand bound is independent of $\bf{x}$, this completes the proof.
\end{proof}

\begin{lem}\label{lem:coupling}
If $s \in \bbN$ is so large that $\k^s < \delta/4$, then the following holds.  For any $n \in \bbN$ and any $\bf{x},\bf{y} \in \X^{V_n}$, the distributions
\[\sfP_n^{\bf{x}}\big\{(\bs{\xi}_{n,0},\dots,\bs{\xi}_{n,s}) \in \cdot \big\}\quad \hbox{and} \quad \sfP_n^{\bf{y}}\big\{(\bs{\zeta}_{n,0},\dots,\bs{\zeta}_{n,s}) \in \cdot \big\}\]
of the random walks started at $\bf{x}$ and $\bf{y}$ up to the finite time-horizon $s$ have a coupling $\bbQ$ such that
\begin{equation}\label{eq:coupled-close}
\bbQ\big\{d^{(V_n)}(\bs{\xi}_{n,s},\bs{\zeta}_{n,s}) < \delta\big\} > 1/2.
\end{equation}
\end{lem}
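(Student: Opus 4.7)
The natural strategy is to construct a single explicit coupling on $\X^{V_n}\times \X^{V_n}$ whose marginals are the two random walks, and in which corresponding coordinates quickly become equal once they have been resampled. The plan is to build this coupling synchronously in time and independently across coordinates, and then to control the expected final Hamming distance by Markov's inequality.

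Concretely, I would define the joint process $(\bs{\xi}_{n,t},\bs{\zeta}_{n,t})_{t=0}^s$ as follows. Start from $(\bf{x},\bf{y})$. For each coordinate $v\in V_n$ and each time $t\in\{1,\dots,s\}$, sample independently a Bernoulli variable $\eta_{t,v}$ equal to $1$ with probability $\k$ and $0$ with probability $1-\k$, together with an independent sample $y_{t,v}\sim\nu$. If $\eta_{t,v}=1$, copy the previous values of both coordinates; if $\eta_{t,v}=0$, set both $\xi_{n,t,v}$ and $\zeta_{n,t,v}$ equal to $y_{t,v}$. A one-line check shows that the marginal law of $(\bs{\xi}_{n,t})$ is $\sfP_n^{\bf{x}}$ and that of $(\bs{\zeta}_{n,t})$ is $\sfP_n^{\bf{y}}$, since in each coordinate each marginal sees exactly the transitions~(\ref{eq:law-of-X_k}).

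Next I would show that under this coupling the two walks agree at coordinate $v$ at time $s$ whenever any resampling has occurred there, i.e.\ on the event $\{\exists t\leq s:\eta_{t,v}=0\}$. Hence the set of coordinates at which they can possibly disagree is contained in $B:=\{v:\eta_{1,v}=\cdots=\eta_{s,v}=1\}$, and since $d\leq 1$,
\[
d^{(V_n)}(\bs{\xi}_{n,s},\bs{\zeta}_{n,s}) \;\leq\; \frac{|B|}{|V_n|}.
\]
The events $\{v\in B\}$ are independent across $v$, each with probability $\k^s$, so
\[
\bbQ\,d^{(V_n)}(\bs{\xi}_{n,s},\bs{\zeta}_{n,s}) \;\leq\; \k^s \;<\; \delta/4.
\]

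Finally, Markov's inequality gives
\[
\bbQ\{d^{(V_n)}(\bs{\xi}_{n,s},\bs{\zeta}_{n,s})\geq \delta\} \;\leq\; \frac{\k^s}{\delta} \;<\; \frac{1}{4},
\]
which is stronger than the required bound~(\ref{eq:coupled-close}). There is no real obstacle here: the only point that needs care is verifying that the synchronous construction is a valid coupling of the two prescribed laws, which is immediate from~(\ref{eq:law-of-X_k}) applied coordinate by coordinate. Notably, no concentration input is needed at this step, since a first-moment bound already beats $\delta$.
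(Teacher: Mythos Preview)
Your proof is correct and follows the same overall strategy as the paper: build an explicit coupling in which coordinates agree once they have been resampled from $\nu$, bound the expected Hamming distance at time $s$ by $\k^s$, and finish with Markov's inequality.

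The one difference worth noting is in the construction of the coupling itself. You build the full-trajectory coupling synchronously, using a \emph{single} Bernoulli bit $\eta_{t,v}$ and a single $\nu$-sample $y_{t,v}$ shared by both walks at each $(t,v)$; this makes the two walks coalesce permanently at $v$ after the first resampling there, giving $\bbQ\,d^{(V_n)}(\bs{\xi}_{n,s},\bs{\zeta}_{n,s})\leq \k^s$. The paper instead first writes down the time-$s$ marginals $\bigtimes_v(\k^s\delta_{x_v}+(1-\k^s)\nu)$ and $\bigtimes_v(\k^s\delta_{y_v}+(1-\k^s)\nu)$, couples those using a common $\nu$-sample $\bs{\a}$ but \emph{independent} selection bits $\eta_v,\omega_v$ for the two walks, and only afterwards extends to a coupling of the full trajectories (e.g.\ relatively independently over the endpoint coupling). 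That yields the slightly weaker bound $2\k^s$ on the expected distance, which is why the paper needs $\k^s<\delta/4$ rather than your $\k^s<\delta/2$. Your direct path-coupling is a bit cleaner and gives the sharper constant; the paper's version has the mild advantage that one sees the time-$s$ marginal explicitly and then can choose any extension.
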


\begin{proof}
By an induction on $s$ using~(\ref{eq:law-of-X_k}), we have
\[\sfP^\bf{x}_n\{\bs{\xi}_{n,s}\in\cdot\} = \bigtimes_{v\in V_n}\big(\k^s\delta_{x_v} + (1 - \k^s)\nu\big),\]
and similarly for $\sfP^\bf{y}_n\{\bs{\zeta}_{n,s} \in \cdot\}$.

Consider a random triple $(\bs{\a},\bs{\xi},\bs{\zeta})$ of elements of $\X^{V_n}$ with law constructed as follows.  First, choose $\bs{\a}$ from the law $\nu^{\times V_n}$.  Then, for each $v \in V_n$ independently, choose two random bits $\eta_v,\omega_v \in \{0,1\}$ independently, each equal to $1$ with probability $\k^s$.  Finally, for each $v \in V_n$, set
\[\bs{\xi}_v := \left\{\begin{array}{ll}\bs{\a}_v &\quad \hbox{if}\ \eta_v = 0\\ x_v& \quad \hbox{if}\ \eta_v = 1\end{array}\right. \quad \hbox{and} \quad \bs{\zeta}_v := \left\{\begin{array}{ll}\bs{\a}_v &\quad \hbox{if}\ \o_v = 0\\ y_v& \quad \hbox{if}\ \o_v = 1.\end{array}\right.\]
Letting $\l$ be the joint distribution of $(\bs{\a},\bs{\xi},\bs{\zeta})$, it follows that
\[\l\{\bs{\xi} \in \cdot\} = \sfP^\bf{x}_n\{\bs{\xi}_{n,s}\in\cdot\}, \quad \l\{\bs{\zeta} \in \cdot\} = \sfP^\bf{y}_n\{\bs{\zeta}_{n,s} \in \cdot\},\]
and
\[\int d^{(V_n)}(\bs{\zeta},\bs{\xi})\,\d\l \leq 2\k^s < \delta/2.\]

Thus, under $\l$, the pair of random variables $(\bs{\xi},\bs{\zeta})$ are a coupling of $(\bs{\xi}_{n,s},\bs{\zeta}_{n,s})$ under which the probability of the event $\{d^{(V_n)}(\bs{\xi}_{n,x},\bs{\zeta}_{n,s}) < \delta\}$ is greater that $1/2$, by Chebyshev's Inequality.  Now we can choose any extension of this to a coupling $\sfQ$ of the whole random trajectories $(\bs{\xi}_{n,0},\dots,\bs{\xi}_{n,s})$ and $(\bs{\zeta}_{n,0},\dots,\bs{\zeta}_{n,s})$: for instance, we can couple them relatively independently over the given coupling of the end-states $\bs{\xi}_{n,s}$ and $\bs{\zeta}_{n,s}$.
\end{proof}

\begin{rmk}
Using Chernoff's Inequality for the random sum $\sum_v (\eta_v + \o_v)$, one can actually improve~(\ref{eq:coupled-close}) to a lower bound of the form $1 - O(\rme^{-\b|V_n|})$, but we will not need this. \fin
\end{rmk}

\begin{proof}[Proof of Proposition~\ref{prop:B+}]
Let $\calO$ be a hereditary w$^\ast$-neighbourhood of $\nu^{\times G}$ and let $\delta \in (0,1)$.  Choose some $\k \in (1 - \delta,1)$, and then choose $s \in \bbN$ so that $\k^s < \delta/4$.

Having chosen $s$, let $n$ be so large that
\begin{equation}\label{eq:stays-good}
\inf_{\bf{x} \in \O(\calO,\s_n)}\sfP^\bf{x}_n\big\{\bs{\xi}_{n,t} \in \O(\calO,\s_n) \ \forall t=1,2,\dots,s\big\} > 3/4
\end{equation}
and
\begin{equation}\label{eq:small-steps}
\inf_{\bf{x} \in \X^{V_n}}\sfP^\bf{x}_n\big\{d^{(V_n)}(\bs{\xi}_{n,t},\bs{\xi}_{n,t+1}) < \delta\ \forall t=0,1,\dots,s-1\big\} > 3/4.
\end{equation}
This is possible by Lemmas~\ref{lem:staying-in-model-spaces} and~\ref{lem:small-steps}, respectively.  We will show that $\O(\calO,\s_n)$ is $\delta$-connected for any such $n$.

Suppose that $\bf{x},\bf{y} \in \O(\calO,\s_n)$, and let $\sfQ$ be a coupling of two trajectories of the random walk up to time $s$, one starting from $\bf{x}$ and the other from $\bf{y}$, as constructed in Lemma~\ref{lem:coupling}.  Then the conjunction of the bounds~(\ref{eq:stays-good}),~(\ref{eq:small-steps}) and~(\ref{eq:coupled-close}) shows that the event
\begin{multline*}
\Big\{\bf{x} = \bs{\xi}_{n,0},\bs{\xi}_{n,1},\dots,\bs{\xi}_{n,s-1},\bs{\xi}_{n,s},\bs{\zeta}_{n,s},\bs{\zeta}_{n,s-1},\dots,\bs{\zeta}_{n,1},\bs{\zeta}_{n,0} = \bf{y} \\ \hbox{is a $\delta$-path which stays within}\ \O(\calO,\s_n)\Big\}
\end{multline*}
has positive probability under $\bbQ$: so, in particular, such a $\delta$-path exists.  Since $\delta$ was arbitrary, this completes the proof.
\end{proof}

\begin{rmk}
Since the choice of $s$ in the above proof depends only on $\delta$, it actually shows that Bernoulli shifts have uniformly connected model spaces rel $\S$, as in the remark at the end of Subsection~\ref{subs:connected}. \fin
\end{rmk}

\section{Analysis of Popa factors}

\subsection{Actions and cocycles for property-(T) groups}

Let $G$ be a finitely generated group. Let $S$ be a finite and symmetric generating set, and let $R$ be the set of all the corresponding relations in the free group on $S$ (including concatenations or conjugates of other relations).  Recall that $G$ has Kazhdan's \textbf{property (T)} if there is a $c > 0$ for which the following holds: whenever $\pi:G\actson V$ is a unitary representation, if there is some $v \in V$ such that $\|v\| = 1$ and
\begin{eqnarray}\label{eq:T}
\max_{s \in S}\|\pi^sv-v\| \leq c,
\end{eqnarray}
then $\pi$ has a nontrivial invariant vector.  The value of $c$ depends on the choice of $S$, but its existence does not.  See, for instance,~\cite{BekdelaHVal08}.

Now suppose that $(X,\mu,T)$ is a $G$-system and that $K\leq \bbT$ is a closed subgroup.  Let $\cal{U}(\mu,K)$ be the set of measurable functions $X\to K$ modulo agreement $\mu$-a.e.  This is a group under pointwise addition, and is naturally equipped with the topology of convergence in probability.  That topology is Polish, with a suitable metric given by the group-norm
\[\|f\|_\mu := \int |f|\,\d\mu,\]
where $|\cdot|$ is the quotient group-norm on $\bbT$ of the usual absolute value on $\bbR$.  The action $T:G \curvearrowright (X,\mu)$ induces an action of $G$ on $\cal{U}(\mu,K)$. We will need to work with the cohomology of this action in degree $1$, which is conveniently expressed in terms of the generators $S$ and relations $R$.

Firstly, a \textbf{$K$-valued $1$-cochain} is an equivalence class modulo $\mu$ of measurable functions $\a:S\times X\to K$, or equivalently an element of $\U(\mu,K)^S$.  For a $1$-cochain $\a$, we set
\[\|\a\|_{\mu,S} := \sum_{s\in S}\|\a(s,\cdot)\|_\mu.\]
This defines a Polish group-norm on the group of $1$-cochains under pointwise addition, similarly to $\U(\mu,K)$.

Next, given a $1$-cochain $\a:S\times X\to K$ and a word $w = s_\ell s_{\ell-1}\dots s_1$ over the alphabet $S$, define
\[\a(w,\cdot) := \a(s_1,\cdot) + \a(s_2,T^{s_1}(\cdot)) + \cdots + \a(s_\ell,T^{s_1\dots s_{\ell-1}}(\cdot)).\]
Restricting to relations, the resulting function
\[R\times X\to K:(w,x)\mapsto \a(w,x)\]
is the \textbf{$2$-coboundary} of $\a$, denoted by $d\a$. For any $\g:R\times X\to \bbT$ and finite $F \subseteq R$, we set
\[\|\g\|_{\mu,F} := \max_{w \in F}\|\g(w,\cdot)\|_\mu.\]

A $1$-cochain is a \textbf{$1$-cocycle} if $d\a = 0$ a.s. These form a closed subgroup $Z^1(T,\mu,K)$ of the group of $1$-cochains.

Lastly, if $\b \in \cal{U}(\mu,K)$, then its \textbf{$1$-coboundary} is the $1$-cocycle
\[d\b:(s,x) \mapsto \b(T^sx) - \b(x).\]
These form a further subgroup $B^1(T,\mu,K) \leq Z^1(T,\mu,K)$, not necessarily closed, and the quotient of these groups is the \textbf{first cohomology group} $H^1(T,\mu,K)$.

The next result is due to Schmidt~\cite[Theorem 3.4]{Sch81} and independently to Zimmer~\cite[Theorem 2.11]{Zim81}.  We include a proof in order to show that the relevant constants do not depend on the action $T$, or on the choice of the closed subgroup $K$ of $\bbT$.

\begin{thm}\label{thm:Sch}
If $G$ has property (T), then there is some $r > 0$ with the following property.  Let $(X,\mu,T)$ be an ergodic $G$-system.  For any closed subgroup $K \leq \bbT$ and any $\a \in Z^1(T,\mu,K)$, we have
\[\|\a\|_{\mu,S} \leq r \quad \Longrightarrow \quad \a \in B^1(T,\mu,K).\]
\end{thm}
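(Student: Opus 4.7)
The strategy is to package $\a$ into a unitary representation of $G$, apply property (T) to extract a nonzero invariant vector, and then recognise that vector as $\chi\circ\b$ for the transfer function $\b$ exhibiting $\a$ as a coboundary.  First, having extended $\a$ to a $K$-valued $1$-cocycle on all of $G$ via the word formula, I will define the twisted Koopman representation $\pi_\a$ on $\rmH := L^2(X,\mu,\bbC)$ by
\[(\pi_\a^g f)(x) := \chi(\a(g, T^{g^{-1}} x))\, f(T^{g^{-1}} x), \qquad \chi(t) := \rme^{2\pi \rmi t}.\]
The cocycle identity $\a(gh,x) = \a(h,x) + \a(g, T^h x)$ together with the multiplicativity of $\chi$ make this into a bona fide unitary representation: each $\pi_\a^g$ is an isometry because $T^{g^{-1}}$ preserves $\mu$ and $|\chi| \equiv 1$.

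Next I will use the elementary inequality $|\chi(t) - 1|^2 = 4\sin^2(\pi t) \leq 2\pi^2 |t|$, valid for $t \in \bbT$ with $|\cdot|$ the quotient norm, together with the $T$-invariance of $\mu$, to obtain
\[\|\pi_\a^s \bf{1} - \bf{1}\|_2^2 = \int |\chi(\a(s, x)) - 1|^2\,\d\mu(x) \leq 2\pi^2 \|\a(s, \cdot)\|_\mu\]
for each $s \in S$, where $\bf{1}$ denotes the constant function $1 \in \rmH$.  Hence if $\|\a\|_{\mu,S} \leq r$ then $\max_{s \in S}\|\pi_\a^s \bf{1} - \bf{1}\|_2 \leq \sqrt{2\pi^2 r}$, and on choosing $r := c^2/(2\pi^2)$ with $c$ the Kazhdan constant for $(G,S)$, property (T) applied to $\pi_\a$ and the unit vector $\bf{1}$ will produce a nonzero $\pi_\a$-invariant vector $\phi \in \rmH$.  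Note that $r$ depends only on $G$ and $S$, not on $T$, $\mu$ or $K$, as required by the statement.

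It remains to decode this invariant vector.  Taking absolute values in $\pi_\a^g \phi = \phi$ shows that $|\phi|$ is $T$-invariant, hence a.e.\ equal to a positive constant by ergodicity; after normalising so $|\phi| \equiv 1$ and writing $\phi = \chi(\b)$ for some measurable $\b : X \to \bbT$, the substitution $y = T^{g^{-1}} x$ in the invariance equation yields $\a(g, y) = \b(T^g y) - \b(y)$ in $\bbT$, so $\a = d\b$ as a $\bbT$-valued coboundary.  The main subtlety, and the last step of the plan, is to upgrade this to a $K$-valued coboundary so that the conclusion takes place in the correct group.  If $K = \bbT$ there is nothing to do; otherwise the only other nontrivial possibility is that $K = \tfrac{1}{n}\bbZ/\bbZ$ is finite cyclic, and then because $\a$ is $K$-valued we have $d(n\b) = n\a = 0$, so $n\b$ is $T$-invariant and hence a.e.\ equal to some constant $c \in \bbT$ by ergodicity.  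Replacing $\b$ by $\b - c'$ for any $c' \in \bbT$ satisfying $nc' = c$ preserves $d\b = \a$ while forcing $\b \in \U(\mu,K)$, completing the proof.
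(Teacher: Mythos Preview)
Your proof is correct and follows essentially the same route as the paper's: build the $\a$-twisted Koopman representation, use property (T) with the constant vector as an almost-invariant vector to produce a fixed vector, use ergodicity to see that fixed vector has constant modulus, and read off the coboundary equation. The only cosmetic differences are a slightly different constant ($r=c^2/2\pi^2$ versus the paper's $c^2/4\pi$), and that you handle the passage from $\bbT$ to general $K$ by multiplying $\b$ by $n$ whereas the paper argues directly that the coset-valued map $x\mapsto \b(x)+K$ is $T$-invariant---these are equivalent.
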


\begin{proof}
\emph{Step 1.}\quad First suppose that $K = \bbT$.

Let $c$ be the constant in the definition of property (T), let $r := c^2/4\pi$, and consider $\a \in Z^1(T,\mu,\bbT)$ with $\|\a\|_{\mu,S} \leq r$.  Let $\pi:G\actson L^2(\mu)$ be the Koopman representation of $T$ twisted by $\a$:
\[(\pi^gf)(x) := \rme^{2\pi\rmi \a(g^{-1},x)}\cdot f(T^{g^{-1}}x).\]
This is a well-defined $G$-action because $\a$ satisfies the defining equations for a $1$-cocycle.

Now observe that
\[\|\pi^s1_X - 1_X\|_{L^2(\mu)} = \sqrt{\int |\rme^{2\pi\rmi \a(s^{-1},x)} - 1|^2\,\mu(\d x)} \leq \sqrt{4\pi r} = c,\]
where $1_X$ is the constant function $1$ on $X$.  Therefore property (T) gives some $f \in L^2(\mu)$ such that $\|f\|_{L^2(\mu)} = 1$ and $\pi^gf = f$ for all $g \in G$.

This fixed-point equation implies that $|f|$ is $T$-invariant, hence $\mu$-a.s. constant by ergodicity.  Therefore $f$ is actually $\rmS^1$-valued, and the fixed-point equation reads
\[\rme^{2\pi\rmi \a(g,x)} = \ol{f(T^gx)}f(x) \quad \forall g \in G.\]
Taking arguments, this asserts that $\a \in B^1(T,\mu,\bbT)$, as required.

\vspace{7pt}

\emph{Step 2.}\quad Now consider general $K \leq \bbT$.  If we regard $\a$ as a $\bbT$-valued function, Step 1 gives some $\b_0 \in \cal{U}(\mu,\bbT)$ such that
\[\a(g,x) = \b_0(T^gx) - \b_0(x) \quad \hbox{for}\ \mu\hbox{-a.e.}\ x\ \forall g \in G.\]
Since $\a$ is actually $K$-valued, this implies that the coset $\b_0(x) + K$ is $T$-invariant, and hence a.s. constant by ergodicity.  Let $c + K$ be that coset, and let $\b(x):= \b_0(x) - c$.  Then $\b$  takes values in $K$ almost surely, and still satisfies
\[\a(g,x) = \b(T^gx)-\b(x) \quad \hbox{for}\ \mu\hbox{-a.e.}\ x\ \forall g \in G.\]
\end{proof}

Next we introduce a `roughened' notion of cocycles.  Given a $G$-system $(X,\mu,T)$, $\eps > 0$, and finite $F \subseteq R$, an element $\a \in \U(\mu,K)^S$ is a \textbf{$K$-valued $(F,\eps)$-near-cocycle over} $(X,\mu,T)$ if
\[\|d\a\|_{\mu,F} < \eps.\]
The set of these will be denoted $Z^1_{F,\eps}(T,\mu,K)$.  For these we have the following roughened version of Theorem~\ref{thm:Sch}.

\begin{thm}\label{thm:noisySch}
Suppose $G = \langle S\,|\,R\rangle$ has property (T), and let $r > 0$ be as Theorem~\ref{thm:Sch}.  Then for every $r' > 0$ there are $\eps > 0$ and finite $F \subseteq R$ such that the following holds.  If $(X,\mu,T)$ is an ergodic $G$-system, and $\a \in Z^1_{F,\eps}(T,\mu,K)$ satisfies
\[\|\a\|_{\mu,S} \leq r,\]
then there is some $\b \in \cal{U}(\mu,K)$ such that
\[\|\a - d\b\|_{\mu,S} < r'.\]
\end{thm}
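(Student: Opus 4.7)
The plan is to prove Theorem~\ref{thm:noisySch} by contradiction and compactness, using an ultraproduct to reduce the near-cocycle statement to the true-cocycle statement of Theorem~\ref{thm:Sch}. Suppose the conclusion fails for some $r' > 0$. Then for each $n$ we can choose a finite $F_n \subseteq R$ and $\eps_n > 0$ with $F_n \uparrow R$ and $\eps_n \downarrow 0$, together with an ergodic $G$-system $(X_n, \mu_n, T_n)$, a closed subgroup $K_n \le \bbT$, and an $\a_n \in Z^1_{F_n, \eps_n}(T_n, \mu_n, K_n)$ with $\|\a_n\|_{\mu_n, S} \le r$, such that no $\b \in \U(\mu_n, K_n)$ satisfies $\|\a_n - d\b\|_{\mu_n, S} < r'$. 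The closed subgroups of $\bbT$ form a countable family, so by passing to a subsequence we may assume $K_n = K$ is a fixed closed subgroup.

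Fix a non-principal ultrafilter $\omega$ on $\bbN$ and form the ultraproduct probability space $(X_\omega, \mu_\omega)$ (equivalently, the tracial ultrapower of the $L^\infty(\mu_n)$) equipped with the ultrapower $G$-action $T_\omega$. For each $s \in S$ the $K$-valued sequence $(\a_n(s,\cdot))_n$ is uniformly bounded and so represents an element $\a_\omega(s,\cdot) \in \U(\mu_\omega, K)$. Because $F_n \uparrow R$ and $\eps_n \downarrow 0$, for every relation $w \in R$ we have $\|d\a_n(w,\cdot)\|_{\mu_n} \to 0$ along $\omega$, hence $d\a_\omega(w,\cdot) = 0$. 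Thus $\a_\omega$ is a genuine $K$-valued $1$-cocycle on $(X_\omega, \mu_\omega, T_\omega)$, and $\|\a_\omega\|_{\mu_\omega, S} = \lim_\omega \|\a_n\|_{\mu_n, S} \le r$.

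The main obstacle I anticipate is verifying that $(X_\omega, \mu_\omega, T_\omega)$ is again ergodic, so that Theorem~\ref{thm:Sch} can be applied directly. This will be handled by invoking property (T) a second time, at the level of the untwisted Koopman representations: ergodicity of each $T_n$ and property (T) together give the uniform spectral gap
\[\max_{s \in S}\|\pi^s_n f_n - f_n\|_{L^2_0(\mu_n)} \ge c \|f_n\|_{L^2_0(\mu_n)} \quad \forall\, f_n \in L^2_0(\mu_n),\]
with $c$ depending only on Kazhdan's constant for $G$; this inequality passes to the ultrapower Koopman representation on $L^2_0(\mu_\omega)$, precluding any nonzero $T_\omega$-invariant vector there. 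The remaining technical points are routine: one must check that the ultrapower construction respects the norms $\|\cdot\|_{\mu,S}$, the coboundary operator $d$, and membership in $\U(\mu, K)$, which follows from standard features of tracial ultrapowers once the basic framework is in place.

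Finally, I will apply Theorem~\ref{thm:Sch} to the cocycle $\a_\omega$ on the ergodic system $(X_\omega, \mu_\omega, T_\omega)$, using $\|\a_\omega\|_{\mu_\omega, S} \le r$, to obtain $\b_\omega \in \U(\mu_\omega, K)$ with $\a_\omega = d\b_\omega$ exactly. Representing $\b_\omega$ by a sequence of elements $\b_n \in \U(\mu_n, K)$ yields
\[\lim_\omega \|\a_n - d\b_n\|_{\mu_n, S} = \|\a_\omega - d\b_\omega\|_{\mu_\omega, S} = 0,\]
so for $\omega$-many indices $n$ we have $\|\a_n - d\b_n\|_{\mu_n, S} < r'$, contradicting the choice of $(\a_n)$ and completing the proof.
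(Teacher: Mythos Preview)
Your argument is sound and reaches the same conclusion by a genuinely different route. Both proofs are compactness-by-contradiction arguments, but the compactifications differ. The paper first transfers every putative counterexample to the fixed shift space $Y = K^{S\times G}$ via the map $x \mapsto (\alpha(s,T^gx))_{s,g}$, so that the varying systems $(X_n,\mu_n,T_n,\alpha_n)$ become a sequence of ergodic shift-invariant measures $\nu_n$ on $Y$ with a single canonical cochain $\alpha_0$. It then takes a weak$^\ast$ subsequential limit $\nu$, invokes the Glasner--Weiss theorem (ergodic measures are weak$^\ast$-closed for property-(T) groups) to retain ergodicity, applies Theorem~\ref{thm:Sch} at $\nu$, and pushes the resulting $\beta$ back along the sequence by approximating it with a continuous function. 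Your ultraproduct plays the role of this limit $\nu$, and your spectral-gap argument for ergodicity of $T_\omega$ is precisely the content of Glasner--Weiss reproved in the ultrapower setting. What the paper buys is that everything stays inside standard Borel $G$-systems, so Theorem~\ref{thm:Sch} applies as stated; what your approach buys is that you never have to pass to the universal space or approximate by continuous functions, and the lifting of $\beta_\omega$ to a sequence $(\beta_n)$ is immediate.

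One point deserves a sentence of care rather than being folded into ``routine'': Theorem~\ref{thm:Sch} is stated (per the paper's conventions) for actions on \emph{standard} probability spaces, and the Loeb/ultraproduct space $(X_\omega,\mu_\omega)$ is not standard. You can either note that the proof of Theorem~\ref{thm:Sch} uses only that invariant measurable functions are a.e.\ constant and so goes through verbatim, or---more cleanly---pass to the $T_\omega$-invariant countably generated sub-$\sigma$-algebra determined by $\{\alpha_\omega(s,T_\omega^g\cdot):s\in S,\ g\in G\}$, whose associated factor is a standard ergodic $G$-system on which $\alpha_\omega$ is still a cocycle of the same norm. Either fix is short, and with it your proof is complete.
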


\begin{proof}
\emph{Step 1.}\quad We first convert the result to an assertion about invariant measures on a fixed $G$-space.  Let $Y:= (K^S)^G = K^{S\times G}$, equipped with the $G$-action by coordinate right-shift, and define the canonical $1$-cochain $\a_0:S\times Y\to K$ by
\[\a_0(s,y):= y_{s,e}.\]
We will prove that the desired result is implied by the following:
\begin{quote}
\emph{For every $r' > 0$ there are $\eps > 0$ and finite $F \subseteq G$ such that the following holds.  If $\nu$ is an ergodic shift-invariant Borel probability on $Y$, and if
\[\|\a_0\|_{\nu,S} \leq r \quad \hbox{and} \quad \|d\a_0\|_{\nu,F} < \eps,\]
then there is some $\b_0 \in \cal{U}(\nu,K)$ such that
\[\|\a_0 - d\b_0\|_{\nu,S} < r'.\]}
\end{quote}

Indeed, suppose this result is known, and consider $(X,\mu,T)$ and $\a$ as in the statement of the theorem.  Define
\[\phi:X\to Y:x\mapsto (\a(s,T^gx))_{s \in S,g \in G}.\]
Then $\phi$ intertwines $T$ with the coordinate shift on $Y$, and so $\nu := \phi_\ast\mu$ is an ergodic $G$-invariant probability on $Y$.  Moreover, the definition of $\phi$ gives $\a = \a_0\circ \phi$, and so
\[\|\a_0\|_{\nu,S} = \|\a\|_{\mu,S} \leq r \quad \hbox{and} \quad \|d\a_0\|_{\nu,F} = \|d\a\|_{\mu,F} < \eps.\]
So the claim above provides some measurable $\b_0:Y\to K$ such that, setting $\b:= \b_0\circ\phi$, we have
\[\|\a - d\b\|_{\mu,S} = \|\a_0 - d\b_0\|_{\nu,S} < r'.\]

\vspace{7pt}

\emph{Step 2.}\quad The rest of the proof is by contradiction.  Fix an increasing sequence $(F_i)_{i\geq 1}$ of finite sets whose union is $R$, and suppose that one could find $r' > 0$ and a sequence $(\nu_i)_{i\geq 1}$ of ergodic shift-invariant Borel probabilities on $Y$ such that
\begin{eqnarray}\label{eq:ass-on-nu}
\|\a_0\|_{\nu_i,S} \leq r \quad \hbox{and} \quad \|d\a_0\|_{\nu_i,F_i} < 2^{-i}
\end{eqnarray}
for all $i$, but also such that
\begin{eqnarray}\label{eq:contradict}
\|\a_0 - d\b_0\|_{\nu_i,S} \geq r'.
\end{eqnarray}
for all measurable functions $\b_0:Y \to K$ and all $i$.

By passing to a subsequence, we may assume that these measures $\nu_i$ weak$^\ast$-converge to a measure $\nu$, which must still be shift-invariant.  Since $G$ has property (T), a theorem of Glasner and Weiss~\cite{GlaWei97} gives that the set of ergodic measures is weak$^\ast$-closed among the shift-invariant probability measures on $Y$, so this $\nu$ is still ergodic.

Now, since each $\a_0(s,\cdot)$ is continuous on $Y$, the two parts of~(\ref{eq:ass-on-nu}) give
\[\|\a_0\|_{\nu,S} = \lim_{i\to\infty}\|\a_0\|_{\nu_i,S} \leq r\]
and
\[\|d\a_0(w,\cdot)\|_{\nu} = \lim_{i\to \infty}\|d\a_0(w,\cdot)\|_{\nu_i} = 0 \quad \forall w \in \bigcup_i F_i = R.\]

Therefore $\a_0:S\times Y\to K$ is an element of $Z^1(\rm{shift},\nu,K)$ with $\|\a_0\|_{\nu,S} \leq r$, so Theorem~\ref{thm:Sch} gives some $\b_1 \in \cal{U}(\nu,K)$ such that
\[\a_0 = d\b_1 \quad \nu\hbox{-a.s.}.\]

Letting $\b_0$ be a continuous function that approximates $\b_1$ sufficiently well in $\nu$-probability, we can arrange that
\[\|\a_0 - d\b_0\|_{\nu,S} < r'.\]
Since $\b_0$ is continuous and $\nu_i \stackrel{\rm{weak}^\ast}{\to} \nu$, this implies that also
\[\|\a_0 - d\b_0\|_{\nu_i,S} < r'.\]
for all sufficiently large $i$.  This contradicts~(\ref{eq:contradict}).
\end{proof}


Since the bounds in Theorem~\ref{thm:noisySch} do not depend on the system $(X,\mu,T)$, they are nontrivial even for actions on finite sets.  In particular, suppose that $H < G$ is a finite-index subgroup, let $V := G/H$, and let $\G := (V,E)$ be the directed Schreier graph resulting from the generating set $S$.  Let $G$ act on $V$ by left-multiplication.  Observe that $S \times V$ is in canonical bijection with $E$, so a $1$-cochain for this system may be interpreted as a map $\a:E\to K$.  For any $F\subseteq R$, let $L_F$ be the set of based loops in $\G$ that correspond to walking around a relation from the set $F$, starting from any vertex.  We will identify such a loop by a pair $(v,w)$, where $v$ is its starting vertex and $w$ is the relation. For any $\a:E\to K$, we can now define $d\a:L_R\to K$ by
\[d\a(v,w) := \sum_{i=0}^{\ell-1}\a(s_i\cdots s_1 v,s_{i+1}s_i\cdots s_1 v),\]
where $w = s_\ell s_{\ell-1}\cdots s_1$. As before, a map $\a:E\to K$ is a $1$-cocycle if and only if $d\a = 0$: that is, it sums to zero around any loop in $\G$ which corresponds to a relation of the group.

Similarly to the case of general $G$-systems, let $Z^1(\G,K)$ be the set of $1$-cocycles $\a:E\to K$, let $B^1(\G,K) = d(K^V)$ be the subgroup of coboundaries, and for $\eps > 0$ and $F \subseteq R$ let $Z^1_{\eps,F}(\G,K)$ be the subset of $1$-cochains $\a$ which satisfy
\[\frac{1}{|V|}\sum_{v \in V}|d\a(v,w)| < \eps \quad \forall w\in F.\]

Let $d$ be the metric on $\bbT^S$ given by
\[d(\theta_1,\theta_2) := \sum_{s\in S}|\theta_{1,s} - \theta_{2,s}|.\]
We also write $d$ for the restriction of this metric to $K^S$ for any closed subgroup $K$ of $\bbT$.  Since $S\times V$ is canonically identified with $E$, the normalized Hamming sums $d^{(V)}$ may be regarded as metrics on $\bbT^E$.  Now the translation of Theorems~\ref{thm:Sch} and~\ref{thm:noisySch} into this setting implies the following.

\begin{cor}\label{cor:well-sptd-cosets}
Let $r$ be as in Theorem~\ref{thm:Sch}.

\begin{enumerate}
\item If $\a \in Z^1(\G,K)$ and $d^{(V)}(\a,B^1(\G,K)) \leq r$ then in fact $\a \in B^1(\G,K)$.

\item Given $r' > 0$, let $\eps > 0$ and $F\subseteq R$ be as in Theorem~\ref{thm:noisySch}. If $\a \in Z^1_{\eps,F}(\G,K)$ and $d^{(V)}(\a,B^1(\G,K)) \leq r$ then in fact
\[d^{(V)}(\a,B^1(\G,K)) < r'.\]
\end{enumerate}
\end{cor}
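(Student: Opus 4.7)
The plan is to view Corollary~\ref{cor:well-sptd-cosets} as the specialization of Theorems~\ref{thm:Sch} and~\ref{thm:noisySch} to a particular $G$-system associated to the Schreier graph, namely $(V,\mu_V,T)$ where $\mu_V$ is uniform probability measure on $V=G/H$ and $T$ is left multiplication. Since left multiplication on $G/H$ is transitive, this action is ergodic, so the general theorems apply.

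First I would set up the translation. Under the canonical bijection $S\times V\leftrightarrow E$, any $\alpha \in K^E$ corresponds to an element of $\U(\mu_V,K)^S$ (since modifying a.e.\ is trivial in the finite measure setting). A direct computation shows $\|\alpha - \alpha'\|_{\mu_V,S} = d^{(V)}(\alpha,\alpha')$, where on the left I use the norm from Section 5.1 and on the right the normalized Hamming sum. The definition of $d\alpha$ on loops in $\Gamma$ agrees with the 2-coboundary $d\alpha$ of the group-system when relations $w$ are identified with based loops $(v,w)$, because $T^g v = gv$ for left multiplication. These identifications give $Z^1(\Gamma,K)=Z^1(T,\mu_V,K)$, $B^1(\Gamma,K)=B^1(T,\mu_V,K)=d(K^V)$, and $Z^1_{\eps,F}(\Gamma,K)=Z^1_{F,\eps}(T,\mu_V,K)$.

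The rest is a subtraction trick relying on compactness. Since $K$ is a closed (hence compact) subgroup of $\bbT$, the space $K^V$ is compact and $\beta\mapsto d\beta$ is continuous, so $B^1(\Gamma,K)$ is a compact subset of $K^E$. Consequently the infimum defining $d^{(V)}(\alpha,B^1(\Gamma,K))$ is attained, and under the hypothesis $d^{(V)}(\alpha,B^1(\Gamma,K))\le r$ we can select $\beta\in K^V$ with $\|\alpha - d\beta\|_{\mu_V,S}\le r$. For part (1), the difference $\alpha - d\beta$ is then a 1-cocycle (since $d(d\beta)=0$) whose $\|\cdot\|_{\mu_V,S}$-norm is at most $r$, so Theorem~\ref{thm:Sch} applies and yields $\alpha-d\beta\in B^1$, whence $\alpha\in B^1(\Gamma,K)$. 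For part (2), the same $\alpha-d\beta$ satisfies $d(\alpha-d\beta)=d\alpha$, so it still lies in $Z^1_{F,\eps}$ and has $\|\cdot\|_{\mu_V,S}$-norm at most $r$. Applying Theorem~\ref{thm:noisySch} (with the given $r'$, $F$, $\eps$) produces $\gamma\in\U(\mu_V,K)=K^V$ with $\|\alpha-d\beta-d\gamma\|_{\mu_V,S}<r'$, and then
\[
d^{(V)}(\alpha,B^1(\Gamma,K)) \le \|\alpha - d(\beta+\gamma)\|_{\mu_V,S} < r',
\]
as required.

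No step is mathematically deep; the content lies entirely in Theorems~\ref{thm:Sch} and~\ref{thm:noisySch}. The only thing to be careful with is the bookkeeping between the group-system formulation (cocycles as elements of $\U(\mu,K)^S$) and the graph formulation (cocycles as maps $E\to K$), together with the invocation of compactness of $B^1(\Gamma,K)$ to upgrade the hypothesis $d^{(V)}(\alpha,B^1(\Gamma,K))\le r$ to an actual witness $\beta$.
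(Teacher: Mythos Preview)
Your proposal is correct and follows essentially the same route as the paper: subtract a coboundary witnessing $d^{(V)}(\alpha,B^1)\le r$ to reduce to the case $\|\alpha\|_{\mu_V,S}\le r$, then apply Theorem~\ref{thm:Sch} or Theorem~\ref{thm:noisySch} to the ergodic finite system $(V,\mu_V,T)$. You are in fact more careful than the paper, which tacitly assumes the witness $d\beta$ exists without mentioning the compactness of $B^1(\Gamma,K)$.
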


\begin{proof}
Suppose $d\b \in B^1(\G,K)$ is such that $d^{(V)}(\a,d\b) \leq r$.  It suffices to prove either part with $\a$ replaced by $\a - d\b$, and so we may simplify the assumption to $d^{(V)}(\a,0) \leq r$.  What remains is a special case of Theorem~\ref{thm:noisySch}.
\end{proof}

\begin{rmk}
The relation between $\eps$ and $r'$ in part 2 of this corollary is reminiscent of recent work on coboundary expansion for simplicial complexes by Kaufman, Kazhdan and Lubotzky~\cite{KauKazLub14}.  They prove a similar inequality for $K = \bbZ/2\bbZ$ and for a family of graphs $\G$ constructed from certain Bruhat-Tits buildings.  Their family of examples is much more specialized than ours, but they obtain a more quantitative result: a linear dependence of $\eps$ on $r'$. They need this extra precision to deduce some other expansion properties of these complexes.  It would be interesting to know whether one can be so precise under our more general conditions. \fin
\end{rmk}

\subsection{Proof of Theorem D}

Now let $G$ be an infinite, finitely generated, residually finite group having Kazhdan's property (T), and let $e$ be its identity element.  Let $S$ and $R$ be as in the previous subsection. Such a group $G$ has a descending sequence $G_1 > G_2 > \dots$ of finite-index subgroups such that $\bigcap_n G_n = \{e\}$.

For the proof of Theorem D we will need those subgroups to have two additional properties.

\begin{lem}\label{lem:nontrivAb}
There is a descending sequence $(G_n)_{n\geq 1}$ of finite-index subgroups as above such that
\begin{itemize}
	\item[(i)] they all have nontrivial Abelianization, and
	\item[(ii)] the left-multiplication actions $\s_n:G \to \rm{Sym}(G/G_n)$ form a sofic approximation to $G$.
\end{itemize}
\end{lem}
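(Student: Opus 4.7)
The plan is to use residual finiteness of $G$ to produce a descending sequence of normal finite-index subgroups $N_1 \triangleright N_2 \triangleright \cdots$ with $\bigcap_n N_n = \{e\}$, and then to situate each $G_n$ inside $N_n$. My key reduction is that the inclusion $G_n \leq N_n$ forces property (ii) automatically: left-multiplication on $G/G_n$ is an honest action, so the almost-multiplicativity condition for sofic approximations holds exactly; and for any $g \neq e$, eventually $g \notin N_n$, so normality of $N_n$ gives $x^{-1}gx \notin N_n \supseteq G_n$ for every $x \in G$, whence $\sigma_n^g$ has no fixed points on $G/G_n$. So the task reduces to securing (i) while keeping $G_n \leq N_n$ and the chain descending.

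For this I will rely on the following key lemma: \emph{every infinite finitely generated residually finite group $H$ has a finite-index subgroup with nontrivial abelianization.} My proof uses Sylow theory. Being infinite and residually finite, $H$ admits a nontrivial finite quotient $\pi : H \onto F$. Choose any prime $p$ dividing $|F|$ and a Sylow $p$-subgroup $P \leq F$; as a nontrivial finite $p$-group, $P$ is nilpotent, so $P^{ab} \neq 0$. Setting $K := \pi^{-1}(P)$ gives $[H:K] = [F:P] < \infty$, and the restricted surjection $\pi|_K : K \onto P$ induces a surjection $K^{ab} \onto P^{ab} \neq 0$, so $K^{ab} \neq 0$.

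The chain $(G_n)_{n \geq 1}$ is then built inductively. Having constructed $G_1 > \cdots > G_{n-1}$, apply the key lemma to the subgroup $H := G_{n-1} \cap N_n$, which is finite-index in $G$ and hence infinite, finitely generated, and residually finite. This yields $G_n \leq H$ of finite index in $H$ (and therefore in $G$) with $G_n^{ab} \neq 0$. By construction $(G_n)$ is descending with $G_n \leq N_n$, so property (ii) follows from the reduction above, and as a bonus $\bigcap_n G_n \leq \bigcap_n N_n = \{e\}$.

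I expect the main obstacle to be the key lemma: the difficulty is that $G$ itself may be perfect (as happens, for example, for $G = SL_n(\bbZ)$ with $n \geq 3$), so one cannot simply set $G_n = N_n$. The Sylow trick sidesteps this because even a perfect infinite residually finite group has nontrivial finite quotients, each of which harbours a nontrivial $p$-subgroup, and pulling such a $p$-subgroup back produces the desired nontrivial abelianization.
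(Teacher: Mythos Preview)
Your proof is correct, and its overall architecture matches the paper's: both start from a descending chain of finite-index \emph{normal} subgroups with trivial intersection, place each $G_n$ inside the $n$th normal subgroup, and deduce (ii) exactly as you do (a fixed point in $G/G_n$ would force $g$ into the normal subgroup, which eventually fails for $g\neq e$).

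The difference is in how each $G_n$ with nontrivial abelianization is produced. The paper avoids your separate key lemma and inductive construction: given the normal chain $H_1 > H_2 > \cdots$, it simply takes a nontrivial \emph{cyclic} subgroup $K_n \leq H_n/H_{n+1}$ and sets $G_n := q_n^{-1}(K_n)$, where $q_n:H_n\onto H_n/H_{n+1}$. Then $G_n$ surjects onto the nontrivial abelian group $K_n$, so $G_n^{\rm ab}\neq 0$, and the sandwich $H_n \geq G_n > H_{n+1}$ makes the chain descending automatically. Your Sylow argument pulls back a $p$-subgroup rather than a cyclic subgroup; this is the same idea (pull back something visibly non-perfect from a finite quotient), and your version even isolates it as a reusable lemma about arbitrary infinite finitely generated residually finite groups. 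The cost is a slightly more elaborate induction, since you must intersect with $G_{n-1}$ by hand to keep the chain descending, whereas the paper's sandwich does this for free.
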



\begin{proof}
First, as is standard, there is a sequence $G > H_1 > H_2 > \dots$ of finite-index \emph{normal} subgroups converging to $\{e\}$.

For each $n$, consider the quotient homomorphism
\[q_n:H_n \onto H_n/H_{n+1}.\]
Since the target is a nontrivial group, it contains a nontrivial cyclic subgroup $K_n \leq H_n/H_{n+1}$.  Letting $G_n := q_n^{-1}(K_n)$, we obtain $H_n \geq G_n > H_{n+1}$.  Let $\s_n$ be the left-multiplication action of $G$ on $G/G_n$.

We can now deduce the required properties.
\begin{itemize}
	\item[(i)] By construction, $K_n$ is a quotient group of $G_n/[G_n,G_n]$, so this latter is nontrivial for every $n$.
	\item[(ii)] If left-multiplication by $g \in G$ has a fixed point in $G/G_n$, then it has a fixed point in the further quotient $G/H_n$.  Since $H_n$ is normal in $G$, this requires that $g \in H_n$.  Since the subgroups $H_n$ converge to $\{e\}$, it follows that, for any $g \neq e$, the permutation $\s_n^g$ has no fixed points for all sufficiently large $n$.  This implies that $(\s_n)_{n\geq 1}$ is a sofic approximation.
\end{itemize}
\end{proof}

Henceforth $G$, $S$ and the sequence $(G_n)_n$ given by the above lemma will be fixed.  Let $\G_n := (V_n,E_n)$ be the sequence of directed Schreier graphs on the vertex sets $V_n = G/G_n$ and with $E_n$ defined by the generating set $S$. These form a sofic approximation to $G$ via the homomorphisms $\s_n$ in part (ii) of the above lemma. In this setting we often write $g\cdot v$ in place of $\s_n^g(v)$ for $g \in G$ and $v \in V_n$.

Now let us return to the Popa factor map described in the Introduction.  An isomorphic factor map whose target is a shift-system may be constructed as follows:
\[\Phi:\bbT^G \to \bbT^{S\times G}:(\theta_g)_{g\in G} \mapsto (\theta_{sg} - \theta_g)_{s\in S,g\in G}.\]
This equals $\phi^G$ for the map
\[\phi:\bbT^G\to \bbT^S:\theta \mapsto (\theta_s - \theta_e)_{s \in S}.\]
Since $\phi$ depends on only finitely many coordinates in $\bbT^G$, it is an $\eta$-AL approximation to itself for every $\eta > 0$, and we may work directly with the maps $\phi^{\s_n}$ on model spaces.

This $\Phi$ is a homomorphism of compact Abelian groups.  Since $S$ generates $G$, the kernel of $\Phi$ is the diagonal subgroup of $\bbT^G$.  The image of $\Phi$ is a compact subgroup of $\bbT^{S\times G}$, which we denote by $Z$.  The image measure $\Phi_\ast m^{\times G}$ must equal the Haar measure $\mu_Z$ of $Z$.  Therefore $\Phi$ is a factor map
\[(\bbT^G,m^{\times G},S) \to (\bbT^{S\times G},\mu_Z,S)\]
which is equivalent to the Popa factor map from the Introduction.

To prove Theorem D, we will need a description of the spaces of good models for this factor system.  This begins with the following, which is an immediate consequence of the definitions.

\begin{lem}\label{lem:model-sp-for-quot}
For any finite $F\subseteq R$ and $\eps > 0$ there is a w$^\ast$-neighbourhood $\calO$ of $\mu_Z$ such that
\[\O(\calO,\s_n) \subseteq Z^1_{\eps,F}(\G_n,\bbT) \quad \forall n \geq 1.\] \qed
\end{lem}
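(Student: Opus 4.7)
The plan is to construct the w$^\ast$-neighbourhood $\calO$ explicitly, by assembling a finite family of continuous test functions on $(\bbT^S)^G \cong \bbT^{S\times G}$ whose evaluation along pullback names recovers exactly the loop-sum quantities $d\bf{y}(v,w)$ for the relations $w \in F$.

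For each $w = s_\ell s_{\ell-1}\cdots s_1 \in F$, the natural candidate is
\[
f_w(\zeta) := \Big|\sum_{i=0}^{\ell-1} (\zeta_{s_i\cdots s_1})_{s_{i+1}}\Big|, \qquad \zeta = (\zeta_g)_{g\in G} \in (\bbT^S)^G,
\]
with the empty-product convention $s_0\cdots s_1 = e$. Each $f_w$ is continuous and depends on only finitely many coordinates. I would first verify that $\int f_w\, d\mu_Z = 0$: pushing forward through $\Phi$, on $\eta = \Phi(\theta)$ the inner sum becomes $\sum_i (\theta_{s_{i+1}s_i\cdots s_1} - \theta_{s_i\cdots s_1}) = \theta_w - \theta_e$, which vanishes because $w = e$ in $G$. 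Consequently the set
\[
\calO := \Big\{\nu \in \Pr(\bbT^{S\times G}) :\ \Big|\int f_w\, d\nu\Big| < \eps\ \forall w \in F\Big\}
\]
is a w$^\ast$-neighbourhood of $\mu_Z$.

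Next I would unwind the empirical distribution: using $\Pi^{\s_n}_v(\bf{y})_g = y_{\s_n^g(v)}$ and the convention $g\cdot v = \s_n^g(v)$, one obtains
\[
f_w\big(\Pi^{\s_n}_v(\bf{y})\big) = \Big|\sum_{i=0}^{\ell-1} \bf{y}\big(s_{i+1},\ s_i\cdots s_1\cdot v\big)\Big| = |d\bf{y}(v,w)|,
\]
matching the formula from the previous subsection under the identification $S\times V_n \cong E_n$. Averaging over $v \in V_n$ gives $\int f_w\, dP^{\s_n}_{\bf{y}} = \tfrac{1}{|V_n|}\sum_{v} |d\bf{y}(v,w)|$, a nonnegative quantity, so for $\bf{y} \in \O(\calO,\s_n)$ the bound $|\int f_w\, dP^{\s_n}_{\bf{y}}| < \eps$ is exactly the inequality placing $\bf{y}$ in $Z^1_{\eps,F}(\G_n,\bbT)$.

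The argument is almost entirely bookkeeping; the only delicate step is keeping the group-theoretic conventions straight, so that the composition order in $s_i\cdots s_1$, the right-shift structure on $(\bbT^S)^G$, and the Schreier-graph edge identification $S\times V_n \cong E_n$ all line up correctly when $f_w$ is evaluated along a pullback name. I do not expect any genuine obstacle beyond this.
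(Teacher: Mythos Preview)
Your argument is correct and is precisely the unpacking that the paper leaves to the reader: the paper states this lemma with a bare \qed, calling it ``an immediate consequence of the definitions,'' and your construction of the test functions $f_w$ and the verification that $\int f_w\,\d P^{\s_n}_{\bf{y}} = \frac{1}{|V_n|}\sum_v |d\bf{y}(v,w)|$ is exactly the routine computation that justifies that claim.
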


If $E \subseteq G$ and $\pi_E:\bbT^{S\times G}\to \bbT^{S\times E}$ is the coordinate projection, then $\pi_E$ is a group homomorphism, and the image measure $(\mu_Z)_E = (\pi_E)_\ast\mu_Z$ equals the Haar measure $\mu_{\pi_E(Z)}$.

\begin{lem}\label{lem:groups-equal}
Let $E \subseteq G$ be finite, let $C \subseteq G$ be a finite subset which is connected in $\rm{Cay}(G,S)$ and such that $C \supseteq E\cup SE$, and let $n$ be so large that, for any $v\in V_n$, the map
\[C\to V_n:g\mapsto g\cdot v\]
is a graph isomorphism between the restriction of $\rm{Cay}(G,S)$ to $C$ and the restriction of $\G_n$ to $C\cdot v$.  Then for any $\a \in Z^1(\G_n,\bbT)$ and any $v \in V_n$ we have
\begin{equation}\label{eq:groups-equal}
\big\{(\g(s,g\cdot v))_{s\in S,g \in E}:\ \g \in \a + B^1(\G_n,\bbT)\big\} = \pi_E(Z).
\end{equation}
\end{lem}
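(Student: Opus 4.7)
The plan is to show that both sides of \eqref{eq:groups-equal} are cosets of the same subgroup of $\bbT^{S \times E}$, and that they share a distinguished common element, namely the restriction of $\a$ itself.

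First I would observe that since $E \cup SE \subseteq C$, every difference $\theta_{sg} - \theta_g$ for $s \in S$, $g \in E$ depends only on the coordinates of $\theta$ in $C$, so
\[\pi_E(Z) = \big\{(\theta_{sg} - \theta_g)_{s,g} : \theta \in \bbT^C\big\}.\]
Next I would check that the image subgroup
\[B := \big\{(d\b(s, g\cdot v))_{s \in S, g \in E} : \b \in \bbT^{V_n}\big\} \subseteq \bbT^{S \times E}\]
coincides with $\pi_E(Z)$. The inclusion $B \subseteq \pi_E(Z)$ is immediate on setting $\theta_g := \b(g\cdot v)$ for $g \in C$ and using that $sg \in SE \subseteq C$ whenever $g \in E$. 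The reverse follows by, given $\theta \in \bbT^C$, defining $\b(g\cdot v) := \theta_g$ for $g \in C$ (well-defined since $C \to V_n$ is injective) and $\b \equiv 0$ elsewhere. Together these identifications exhibit the LHS of \eqref{eq:groups-equal} as the coset $(\a(s, g\cdot v))_{s,g} + \pi_E(Z)$, so the problem reduces to showing that $(\a(s, g\cdot v))_{s,g}$ itself belongs to $\pi_E(Z)$.

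For this reduction, the plan is to construct a witness $\theta \in \bbT^C$ by path-integrating $\a$ through the graph isomorphism. After enlarging $C$ if necessary so that $e \in C$ (while preserving connectedness and the graph-isomorphism hypothesis), for each $g \in C$ I will pick a path $e = g_0, g_1, \dots, g_k = g$ in the induced subgraph on $C$ of $\mathrm{Cay}(G,S)$ and set $\theta_g := \sum_{i=0}^{k-1} \a(s_{i+1}, g_i \cdot v)$, where $g_{i+1} = s_{i+1} g_i$. The one step requiring attention is well-definedness: any two choices of path from $e$ to $g$ in the $C$-subgraph of $\mathrm{Cay}(G,S)$ differ by a loop, which spells a word $w \in F(S)$ representing the identity in $G$, hence a relation $w \in R$; the cocycle condition $d\a = 0$ on $R$ then forces the $\a$-sum around the image of that loop in $C \cdot v \subseteq \G_n$ to vanish. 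Once $\theta$ is well-defined, extending any chosen path to $g \in E$ by the single edge to $sg$ (which lies in $C$ since $sg \in SE \subseteq C$) gives $\theta_{sg} - \theta_g = \a(s, g\cdot v)$, completing the proof.

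The main obstacle is the well-definedness step, and it is precisely there that the graph-isomorphism hypothesis earns its keep: a generic loop in $\G_n$ corresponds to an element of some vertex stabilizer $g G_n g^{-1}$ rather than of $R$, and would not be killed by the cocycle condition. Restricting to loops inside the $C$-subgraph, which is identified with a subgraph of $\mathrm{Cay}(G,S)$, rules out such spurious loops and is exactly what lets $R$-cocycles play the role of integrable differential forms on a simply-connected piece of the graph.
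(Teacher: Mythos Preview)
Your argument is correct and follows essentially the same route as the paper's: both identify the two sides as cosets of the same subgroup of $\bbT^{S\times E}$ (the $\a=0$ case, handled by transporting $\theta\leftrightarrow\b$ through the bijection $C\to C\cdot v$), and both place the restriction of $\a$ in that subgroup by path-integrating $\a$ over the connected patch $C$, using the graph-isomorphism hypothesis to guarantee that every loop inside $C\cdot v$ comes from a relation and hence is killed by the cocycle condition. One small repair: you cannot enlarge $C$ to include $e$ without risking the graph-isomorphism hypothesis for the given $n$; instead just pick an arbitrary basepoint $g_0\in C$, set $\theta_{g_0}:=0$, and path-integrate from there --- this is exactly what the paper does.
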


\begin{proof}
We first reduce to the case $\a = 0$.  To this end, observe that if $\a:E_n\to \bbT$ is a cocycle, then it has zero sum around any based loop in $\G_n$ which corresponds to a relation of the group.  The map
\begin{equation}\label{eq:g-dot-v}
C \to C\cdot v:g\mapsto g\cdot v
\end{equation}
defines an isomorphism of the restricted graphs by assumption, so any loop of $\G_n$ that is contained in $C \cdot v$ must arise from a relation of the group.  Therefore the cocycle condition implies that $\a$ sums to zero around any loop contained in $C \cdot v$, and hence
\[(\a|_{S\times g\cdot v})_{g \in E} = ((d\b)|_{S\times g\cdot v})_{g \in E}\]
for some $\b \in \bbT^{C\cdot v}$ obtained by simply summing along paths from some distinguished basepoint in $C$: all of $C$ can be reached this way because $C$ is connected.  Extending $\b$ arbitrarily to a member of $\bbT^{V_n}$, this shows that the left-hand side of~(\ref{eq:groups-equal}) does not depend on $\a$: for every $\a$ that left-hand side is a coset of a homomorphic image of $B^1(\G_n,\bbT)$, and we have just seen that any two of these cosets overlap, hence are equal.

Now suppose that $\a= 0$.  We will prove two separate inclusions.

First, if $\theta \in \bbT^G$ and
\[\g = \pi_E(\Phi(\theta)) = (\theta_{sg} - \theta_g)_{s \in S,g\in E} \in \pi_E(Z),\]
then $\g$ is also equal to $((d\b)|_{S\times g\cdot v})_{g \in E}$ for any choice of $\b \in \bbT^{V_n}$ satisfying
\[\b_{g\cdot v} = \theta_g \quad \forall g \in E.\]
Such a choice exists because $C \supseteq E$ and the map~(\ref{eq:g-dot-v}) is injective.

On the other hand, if $\b \in \bbT^{V_n}$, then the reverse of this argument produces some $\theta \in \bbT^G$ such that $\b_{g\cdot v} = \theta_g$ for all $g \in E$.
\end{proof}

\begin{lem}\label{lem:model-sp-for-quot2}
For any w$^\ast$-neighbourhood $\calO$ of $\mu_Z$, we have
\[\inf_{\a \in Z^1(\G_n,\bbT)}((\phi^{\s_n})_\ast m^{\times V_n})\big(\O(\calO,\s_n) - \a\big) \to 1\]
as $n\to\infty$.
\end{lem}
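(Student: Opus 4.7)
The plan is to show that a uniformly random coboundary translate of any cocycle gives a good model with high probability. Identifying $\phi^{\s_n}$ with the coboundary map $\b \mapsto d\b$ and $m^{\times V_n}$ with Haar on $\bbT^{V_n}$, the lemma amounts to proving that
\[\sfP_\b\big[\a + d\b \in \O(\calO,\s_n)\big] \to 1 \quad \hbox{as}\ n\to\infty,\]
uniformly over $\a \in Z^1(\G_n,\bbT)$, when $\b$ is Haar-distributed on $\bbT^{V_n}$.

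First I will reduce to a concentration statement for finitely many local test functions. A basic w$^*$-neighbourhood of $\mu_Z$ may be written as $\{\nu : |\smint f_i\,\d\nu - \smint f_i\,\d\mu_Z| < \eps,\ i=1,\dots,k\}$ where each $f_i$ is continuous and depends only on coordinates indexed by a common finite set $S\times E$ with $E\subseteq G$ finite. Fix a finite connected $C\subseteq G$ containing $\{e\} \cup E \cup SE$; by soficity of $\S$, all but $o(|V_n|)$ vertices $v\in V_n$ are \emph{good} in the sense that $g\mapsto g\cdot v$ is a graph isomorphism from the restriction of $\rm{Cay}(G,S)$ to $C$ onto the restriction of $\G_n$ to $C\cdot v$.

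The key step is an expectation calculation. For any $\a\in Z^1(\G_n,\bbT)$ and any good $v$, the map
\[\b \mapsto \big(\a(s,g\cdot v) + d\b(s,g\cdot v)\big)_{s\in S,\,g\in E}\]
is affine from $\bbT^{V_n}$ into $\bbT^{S\times E}$, and depends on $\b$ only through the $|C|$ independent uniform coordinates $\b|_{C\cdot v}$. By the $\a = 0$ case of Lemma~\ref{lem:groups-equal} the linear part is a surjective homomorphism onto $\pi_E(Z)$, hence pushes Haar on $\bbT^{C\cdot v}$ to Haar on $\pi_E(Z)$; and Lemma~\ref{lem:groups-equal} applied to $\a$ shows that the translation vector $(\a(s,g\cdot v))_{s,g}$ already lies in $\pi_E(Z)$. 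By translation invariance of Haar, the affine map pushes Haar forward to $(\pi_E)_*\mu_Z$, and so for each $E$-local continuous $f$ and each good $v$,
\[\sfE_\b\big[f(\Pi^{\s_n}_v(\a+d\b))\big] = \smint f\,\d\mu_Z.\]
Averaging over $v$ and absorbing the $o(|V_n|)$ bad vertices into an error term,
\[\sfE_\b\Big[\smint f\,\d P^{\s_n}_{\a+d\b}\Big] = \smint f\,\d\mu_Z + o(1)\]
uniformly in $\a$.

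Finally I will bound the variance. The summand $f(\Pi^{\s_n}_v(\a+d\b))$ depends on $\b$ only through $\b|_{C\cdot v}$, so whenever $C\cdot v \cap C\cdot v' = \emptyset$ the corresponding two summands are independent. Since at most $|C|^2|V_n|$ pairs $(v,v')$ violate this disjointness,
\[\rm{Var}_\b\Big[\smint f\,\d P^{\s_n}_{\a+d\b}\Big] \leq \|f\|_\infty^2|C|^2/|V_n| \to 0,\]
uniformly in $\a$. Chebyshev's inequality and a union bound over $f_1,\dots,f_k$ then give the result. The main obstacle is pinning down the exact law of $\a+d\b$ along the coordinates around a good vertex; once Lemma~\ref{lem:groups-equal} supplies both the image subgroup and a coset representative the rest is a routine second-moment calculation.
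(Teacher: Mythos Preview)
Your proof is correct and follows a genuinely different route from the paper's. Both arguments use Lemma~\ref{lem:groups-equal} in the same way to identify the local law of $\a + d\b$ around a good vertex with $(\mu_Z)_E$; the divergence is in how concentration is obtained. The paper observes that $(R_{\a_n})_\ast(\phi^{\s_n})_\ast m^{\times V_n}$ is Haar on the coset $\a_n + B^1(\G_n,\bbT)$, invokes Lemma~\ref{lem:groups-equal} to conclude local weak$^\ast$ convergence to $\mu_Z$, and then upgrades this to quenched convergence by citing~\cite[Corollary~5.7]{Aus--soficentadd}, which applies because $\mu_Z$ is ergodic as a Bernoulli factor. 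You instead exploit directly the product structure of Haar on $\bbT^{V_n}$: since the summand at $v$ depends only on $\b|_{C\cdot v}$, summands at vertices with disjoint $C$-orbits are independent, and a crude count of overlapping pairs gives a variance of order $1/|V_n|$. Your argument is more elementary and entirely self-contained, avoiding the quenched-convergence machinery of the companion paper; the paper's route is shorter on the page and more conceptual, and would transfer to situations where the reference measure is not a product but one still has access to an ergodicity-based upgrade from local to quenched convergence.
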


\begin{proof} For $\a \in Z^1(\G_n,\bbT)$, let
\[R_\a:\bbT^{V_n}\to \bbT^{V_n}:\b \mapsto \b + \a\]
be the corresponding rotation.  We need to show that the image measures
\[(R_{\a_n})_\ast (\phi^{\s_n})_\ast m^{\times V_n}\]
are asymptotically supported on $\O(\calO,\s_n)$ as $n\to\infty$ for any sequence $\a_n \in Z^1(\G_n,\bbT)$.  We will deduce this by showing that
\[(R_{\a_n})_\ast (\phi^{\s_n})_\ast m^{\times V_n} \q \mu_Z,\]
where this refers to quenched convergence as in~\cite[Definition 5.3]{Aus--soficentadd}.

Since $(\bbT^{S\times G},\mu_Z,S)$ is a factor of a Bernoulli shift, it is ergodic.  Therefore by~\cite[Corollary 5.7]{Aus--soficentadd} quenched convergence will follow if we show that 
\[(R_{\a_n})_\ast (\phi^{\s_n})_\ast m^{\times V_n} \lws \mu_Z \quad \hbox{(local weak$^\ast$ convergence)}.\]
The measure $(R_{\a_n})_\ast (\phi^{\s_n})_\ast m^{\times V_n}$ is equal to the Haar measure on the coset
\[B^1(\G_n,\bbT) + \a_n.\]
The result now follows from Lemma~\ref{lem:groups-equal}.  According to that lemma, for any finite $E \subseteq G$, the projection of this coset to the directed edges which emanate from $E\cdot v$ is simply a copy of $\pi_E(Z)$, provided $n$ is large enough depending on $E$.  For such $n$, the projection of $(R_{\a_n})_\ast (\phi^{\s_n})_\ast m^{\times V_n}$ to $E\cdot v$ is therefore equal to $\mu_{\pi_E(Z)} = (\mu_Z)_E$.
\end{proof}

\begin{proof}[Proof of Theorem D]
Let $r$ be as in Theorem~\ref{thm:Sch}, let $\delta := r/4$, and let $\eps > 0$ and $F\subseteq R$ be given by part 2 of Corollary~\ref{cor:well-sptd-cosets} for $r' := r/10$.  Finally, let $\calO$ be a w$^\ast$-neighbourhood of $\mu_Z$ as given by Lemma~\ref{lem:model-sp-for-quot} for this $F$ and $\eps$.

For any w$^\ast$-neighbourhood $\cal{U}$ of $\mu_Z$, Lemma~\ref{lem:model-sp-for-quot2} gives that
\begin{eqnarray}\label{eq:intersect-coset}
\O(\cal{U},\s_n) \cap (\a + B^1(\G_n,\bbT)) \neq \emptyset
\end{eqnarray}
for any $\a \in Z^1(\G_n,\bbT)$, once $n$ is sufficiently large.

Next, for each $n$, we may identify $G\actson \bbT^{G/G_n}$ as the induction to $G$ of the trivial action of $G_n$ on $\bbT$.  Hence Shapiro's Lemma~\cite[Theorem II.3.7]{Lang--gpcohom} gives
\[H^1(G,\bbT^{G/G_n}) \cong H^1(G_n,\bbT) = \rm{Hom}(G_n,\bbT) = \big(G_n/[G_n,G_n]\big)^\wedge \neq 0,\]
where the last conclusion is the point at which we use part (i) of Lemma~\ref{lem:nontrivAb}.  Therefore for each $n$ there is more than one coset of $B^1(\G_n,\bbT)$ in $Z^1(\G_n,\bbT)$.

Combining this with~(\ref{eq:intersect-coset}), it follows that we may choose sequences
\[\a_n \in B^1(\G_n,\bbT)\ \quad \hbox{and} \quad \a'_n \in Z^1(\G_n,\bbT)\setminus B^1(\G_n,\bbT)\]
satisfying
\[P^{\s_n}_{\a_n},P^{\s_n}_{\a'_n} \stackrel{\rm{weak}^\ast}{\to} \mu_Z \quad \hbox{as}\ n\to\infty.\]
By part 1 of Corollary~\ref{cor:well-sptd-cosets}, we must have that
\[d^{(V_n)}(\a_n',B^1(\G_n,\bbT)) \geq r\]
for all sufficiently large $n$.

Now suppose, for the sake of contradiction, that there were $\delta$-paths
\[\a_n = \a_{n,1},\a_{n,2},\ldots,\a_{n,\ell_n} = \a'_n\]
contained in $\O(\calO,\s_n)$ for arbitrarily large $n$.  By Lemma~\ref{lem:model-sp-for-quot}, they would also be contained in $Z^1_{\eps,F}(\G_n,\bbT)$.  Since $\delta = r/4$, there would have to be some $k \in \{2,\ldots,\ell_n-1\}$ for which
\[r' < r/4 < d^{(V_n)}(\a_{n,k},B^1(\G_n,\bbT)) < 3r/4.\]
Since $\a_{n,k} \in Z^1_{\eps,F}(\G_n,\bbT)$, this would contradict part 2 of Corollary~\ref{cor:well-sptd-cosets}.
\end{proof}

\subsection{Complemented factors}

The deduction of Corollary D$^{\prime\prime}$ rests on the following theorem about a general metric $G$-process $(\X^G,\mu,S,d_\X)$, which may be of independent interest.  It is expressed in terms of a sequence of measures on the spaces $\X^{V_n}$ which `doubly-quenched converge' to $\mu \in \Pr^S(\X^G)$ over $\S$, denoted by $\mu_n \dq \mu$.  This property is introduced in~\cite[Subsection 5.2]{Aus--soficentadd}. By~\cite[Corollary 5.18]{Aus--soficentadd}, the existence of such a sequence $\mu_n$ is an isomorphism-invariant of the process which does not depend on the metric $d_\X$.  We do not recall the definition here, but refer the reader to that reference for full details.

\begin{thm}\label{thm:comp-fact-model-surj}
Suppose there exist $\mu_n \in \Pr(\X^{V_n})$ such that $\mu_n \dq \mu$ rel $\S$.  Then any complemented factor map of $(\X^G,\mu,S)$ is model-surjective rel $\S$.

In particular, any complemented factor map of a Bernoulli shift is model-surjective relative to any sofic approximation of $G$.
\end{thm}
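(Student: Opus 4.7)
Proof plan. The first move is to put the complemented factor in a canonical form. Given the complement $\Psi$, the pair $(\Phi,\Psi):(\X^G,\mu,S)\to (\Y^G\times \Z^G,\nu\times\theta,S)$ is an isomorphism, so by the isomorphism-invariance of model-surjectivity (Corollary~\ref{cor:model-surj-iso-invar}) I may identify $\X$ with $\Y\times \Z$, $\mu$ with $\nu\times \theta$ viewed as a shift-invariant measure on $(\Y\times\Z)^G\cong \Y^G\times\Z^G$, and $\phi$ with the coordinate projection $(y,z)\mapsto y$. Since dq convergence of a $G$-process is an isomorphism-invariant of the underlying process (by~\cite[Corollary 5.18]{Aus--soficentadd}, as cited in the statement of the theorem), the hypothesis still supplies a sequence $\mu_n\in \Pr((\Y\times \Z)^{V_n})$ with $\mu_n\dq \nu\times\theta$.

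Under this reduction, model-surjectivity becomes the following task: given $\bf{y}_i\in \Y^{V_{n_i}}$ with $P^{\s_{n_i}}_{\bf{y}_i}\stackrel{\rm{weak}^\ast}{\to} \nu$, produce $\bf{z}_i\in\Z^{V_{n_i}}$ such that $\bf{x}_i:=(\bf{y}_i,\bf{z}_i)$ satisfies $P^{\s_{n_i}}_{\bf{x}_i}\stackrel{\rm{weak}^\ast}{\to} \nu\times\theta$. Indeed, once this is achieved, $\phi^{\s_{n_i}}(\bf{x}_i)=\bf{y}_i$ tautologically, so the joint empirical $P^{\s_{n_i}}_{(\bf{x}_i,\bf{y}_i)}$ is supported on pairs of the form $((y,z),y)$ and w$^\ast$-converges to the graphical joining $\l$ of $\Phi$, as required. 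To find $\bf{z}_i$, I would exploit dq convergence by sampling two independent pairs $(\bf{y},\bf{z})$ and $(\bf{y}',\bf{z}')$ from $\mu_n$: the defining property of dq gives that the joint empirical of the four-tuple converges (with high probability) to $(\nu\times\theta)\times(\nu\times\theta)$, and projecting to the $\bf{y}$- and $\bf{z}'$-coordinates shows that independent draws from the $\Y$- and $\Z$-marginals of $\mu_n$ typically have joint empirical equal to the product measure $\nu\times\theta$. This supplies the desired product structure, but only for \emph{random} $\Y$-models.

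The main obstacle is to upgrade this from "random $\bf{y}$" to the arbitrary prescribed $\bf{y}_i$. Concretely, one needs to show that for $\bf{z}'$ sampled from the $\Z$-marginal of $\mu_n$ (independently of everything), the expected joint empirical $\sfE[P^{\s_n}_{(\bf{y}_i,\bf{z}')}]$ converges to $\nu\times\theta$ for \emph{any} $\bf{y}_i$ with $P^{\s_n}_{\bf{y}_i}\to \nu$, and that this convergence concentrates sufficiently well (via a Proposition~\ref{prop:Tal}-style estimate applied to the random $\bf{z}'$) that some realization $\bf{z}_i$ witnesses the empirical limit pointwise. The uniformity this requires --- that the conditional marginal $\sfE[\delta_{\Pi^{\s_n}_v(\bf{z}')}]$ is close to $\theta$ on every finite-dimensional projection, uniformly in $v\in V_n$ --- is precisely the extra strength that doubly-quenched convergence adds over singly-quenched convergence, and this is where the dq hypothesis is genuinely used. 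The "in particular" conclusion for Bernoulli shifts follows by taking $\mu_n:=\mu_0^{\times V_n}$ (where $\mu=\mu_0^{\times G}$), for which both dq convergence and the required uniformity are immediate from independence of coordinates.
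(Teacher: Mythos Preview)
Your reduction and overall architecture match the paper's proof exactly: identify $(\X^G,\mu)$ with $(\Y^G\times\Z^G,\nu\times\theta)$ via the isomorphism $(\Phi,\Psi)$, and then show that the coordinate projection $\Pi$ onto $\Y^G$ is model-surjective. The paper carries out this last step by first pushing the measures $\mu_n$ forward to obtain $\theta_n\in\Pr(\Z^{V_n})$ with $\theta_n\dq\theta$ (\cite[Proposition~5.16]{Aus--soficentadd}), and then invoking \cite[Corollary~5.13]{Aus--soficentadd}, which says precisely that if $\theta_n\dq\theta$ then for \emph{any} good-model sequence $\bf{y}_i$ for $\nu$, a $\theta_{n_i}$-random $\bf{z}$ makes $(\bf{y}_i,\bf{z})$ a good model for $\nu\times\theta$ with high probability. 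So your plan is essentially a sketch of that corollary's proof rather than a genuinely different route.

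There is, however, a real gap in your sketch of that step. You propose to get concentration of $P^{\s_n}_{(\bf{y}_i,\bf{z}')}$ around its mean ``via a Proposition~\ref{prop:Tal}-style estimate applied to the random $\bf{z}'$''. But Proposition~\ref{prop:Tal} is a product-measure concentration inequality, and the $\Z$-marginal $\theta_n$ of $\mu_n$ is \emph{not} a product measure in general; nothing in the hypotheses forces any Lipschitz concentration for $\theta_n$. The concentration you need does not come from an external inequality at all: it comes from the dq hypothesis itself via a second-moment computation. For a local test function $f$ one expands $\rm{Var}_{\theta_n}\big(\int f\,\d P^{\s_n}_{(\bf{y}_i,\bf{z}')}\big)$ as an integral against $\theta_n\times\theta_n$, and the defining property $\theta_n\times\theta_n\stackrel{\rm{q}}{\to}\theta\times\theta$ forces this variance to vanish. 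Relatedly, your description of what dq supplies---pointwise-in-$v$ closeness of $\sfE[\delta_{\Pi^{\s_n}_v(\bf{z}')}]$ to $\theta$---is not what dq gives, and is not what is needed; dq controls the two-sample joint empirical, which is exactly what the variance calculation uses. Once you replace the Talagrand appeal with this second-moment argument (or simply cite \cite[Corollary~5.13]{Aus--soficentadd}), your proof is the paper's proof.
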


\begin{proof}
Suppose that $\Phi:(\X^G,\mu,S,d_\X) \to (\Y^G,\nu,S,d_\Y)$ is a complemented factor map, so there is another factor map $\Psi:(\X^G,\mu,S,d_\X) \to (\Z^G,\theta,S,d_\Z)$ such that the combined map
\[(\Phi,\Psi):(\X^G,\mu,S) \to (\Y^G\times \Z^G,\nu\times \theta,S)\]
is a measure-theoretic isomorphism.

By~\cite[Proposition 5.16]{Aus--soficentadd}, there are also measures $\theta_n \in \Pr(\Z^{V_n})$ such that $\theta_n \dq \theta$ rel $\S$: they can be obtained as images of the measures $\mu_n$.  Using these,~\cite[Corollary 5.13]{Aus--soficentadd} gives that the coordinate-projection factor map
\[\Pi:(\Y^G \times \Z^G,\nu\times \theta,S) \to (\Y^G,\nu,S)\]
is model-surjective rel $\S$. This conclusion is written in terms of w$^\ast$-neighbourhoods in~\cite{Aus--soficentadd}, but it is easily turned into condition 2 of Lemma~\ref{lem:model-surj-equiv}. Therefore the factor map
\[\Phi = \Pi\circ (\Phi,\Psi)\]
is a composition of a factor map which is model-surjective rel $\S$ and an isomorphism, so $\Phi$ itself is also model-surjective rel $\S$ by Propositions~\ref{prop:compose-still-surj} and~\ref{prop:isos-surj}.

This argument applies whenever $(\X^G,\mu,S)$ is a Bernoulli shift and $\S$ is a sofic approximation, because then the corresponding product measures on the spaces $\X^{V_n}$ always doubly-quenched converge to $\mu$~\cite[Lemma 5.11]{Aus--soficentadd}.
\end{proof}

\begin{proof}[Proof of Corollary D$^{\prime\prime}$]
This follows at once from the combination of Theorems A, C, D and~\ref{thm:comp-fact-model-surj}.
\end{proof}

%
%

\begin{rmk}
It is easy to prove that the Popa factor map itself is not complemented for any infinite discrete group $G$.  This is because it is a relatively compact extension, so if it had a complementing factor $\Xi:(\bbT^G,m^{\times G},S)\to (Z,\theta,R)$ then $(Z,\theta,R)$ would have to be a compact system, but the original Bernoulli shift $(\bbT^G,m^{\times G},S)$ has no compact factors.

Corollary D$^{\prime\prime}$ is much stronger: it prohibits the Popa factor system from appearing as a complemented factor of any other Bernoulli system in any way.  This holds only for certain special groups $G$.  By contrast, for amenable $G$ the Popa factor is always isomorphic to another copy of the Bernoulli shift by the general theory of~\cite{OrnWei87}, and the same holds if $G$ is a free group by an easy calculation. \fin
\end{rmk}

\subsubsection*{Acknowledgements}

This research was supported partly by a fellowship from the Clay Mathematics Institute and partly by the Simons Collaboration on Algorithms and Geometry

I am grateful for several helpful discussions with Lewis Bowen, David Fisher, Alex Lubotzky and Brandon Seward, and to Brandon Seward and the anonymous referee for correcting several errors in an earlier version. \fin

\bibliographystyle{abbrv}
\bibliography{bibfile}

\parskip 0pt
\parindent 0pt

\vspace{7pt}

%
%
%

\small{Courant Institute, New York University}

\small{251 Mercer St, New York, NY 10012, U.S.A.}

\small{\texttt{tim@cims.nyu.edu}}

\end{document}